\newtheorem{theorem}{Theorem}[section]
\newtheorem{lemma}[theorem]{Lemma}
\newtheorem{proposition}[theorem]{Proposition}
\newtheorem{corollary}[theorem]{Corollary}
\theoremstyle{definition}
\newtheorem{definition}[theorem]{Definition}
\theoremstyle{remark}
\newtheorem{remark}[theorem]{Remark}
\renewcommand\urladdr[1]{}  
\begin{document}
\title[Classification of operator systems and operator spaces]{The
classification problem for finitely generated operator systems and spaces}
\author[Argerami]{Martin Argerami}
\address{Martin Argerami\\
Department of Mathematics and Statistics\\
University of Regina\\
Regina SK S4S 0A2, Canada}
\email{argerami@uregina.ca}
\urladdr{ev03023.math.uregina.ca}
\author[Coskey]{Samuel Coskey}
\address{Samuel Coskey\\
Department of Mathematics\\
Boise State University\\
1910 University Drive\\
Boise ID 83725-1555, USA}
\email{scoskey@nylogic.org}
\urladdr{boolesrings.org/scoskey}
\author[Kalantar]{Mehrdad Kalantar}
\address{Mehrdad Kalantar\\
School of Mathematics and Statistics\\
Carleton University\\
4302 Herzberg Laboratories\\
Ottawa ON K1S 5B6 Canada}
\email{mkalanta@math.carleton.ca}
\urladdr{people.math.carleton.ca/~mkalanta/Home.html}
\author[Kennedy]{Matthew Kennedy}
\address{Matthew Kennedy\\
School of Mathematics and Statistics\\
Carleton University\\
4302 Herzberg Laboratories\\
Ottawa ON K1S 5B6, Canada}
\email{mkennedy@math.carleton.ca}
\urladdr{people.math.carleton.ca/~mkennedy}
\author[Lupini]{Martino Lupini}
\address{Martino Lupini\\
Department of Mathematics and Statistics\\
N520 Ross, 4700 Keele Street\\
Toronto Ontario M3J 1P3, Canada, and Fields Institute for Research in
Mathematical Sciences\\
222 College Street\\
Toronto ON M5T 3J1, Canada}
\email{mlupini@mathstat.yorku.ca}
\urladdr{www.lupini.org}
\author[Sabok]{Marcin Sabok}
\address{Marcin Sabok\\
McGill University\\
Department of Mathematics and Statistics\\
Burnside Hall, Room 1005\\
805 Sherbrooke Street West\\
Montreal QC H3A 0B9, Canada}
\email{marcin.sabok@mcgill.ca}
\thanks{Martino Lupini was supported by the York University Susan Mann
Dissertation Scholarship. This work was completed during the Focussed
Research Group program ``Borel complexity and classification of operator
systems'' at the Banff International Research Station. The hospitality of
BIRS is gratefully acknowledged.}
\dedicatory{}
\subjclass[2000]{Primary 47L25, 03E15; Secondary 46L52, 03C98}
\keywords{Operator system, operator space, Borel complexity, smooth
classification, model theory for metric structures}

\begin{abstract}
The classification of separable operator systems and spaces is commonly
believed to be intractable. We analyze this belief from the point of view of
Borel complexity theory. On one hand we confirm that the classification
problems for arbitrary separable operator systems and spaces are
intractable. On the other hand we show that the finitely generated operator
systems and spaces are completely classifiable (or smooth); in fact a
finitely generated operator system is classified by its complete theory when
regarded as a structure in continuous logic. In the particular case of
operator systems generated by a single unitary, a complete invariant is
given by the spectrum of the unitary up to a rigid motion of the circle,
provided that the spectrum contains at least $5$ points. As a consequence of
these results we show that the relation on compact subsets of $\mathbb{C}^{n}
$, given by homeomorphism via a degree $1$ map, is smooth.
\end{abstract}

\maketitle
\tableofcontents

\section{Introduction}

An operator system is a closed unital self-adjoint subspace of a unital
C*-algebra. The study of operator systems can be traced back to the pioneering work of Arveson in the late 1960s and early 1970s \cite{arveson_subalgebras_1969,arveson_subalgebras_1972}.
The foundational result of the theory is the Choi--Effros abstract
characterization in terms of positive cones on matrix amplifications \cite{choi_injectivity_1977}; see also \cite[Theorem 13.1]{paulsen_completely_2002}. Since then the theory of operator systems has
expanded substantially thanks to the work of Blecher, Paulsen, and many
others \cite{paulsen_completely_2002}.

Operator systems offer a natural framework to study complete positivity,
which is an essential tool in operator algebras, as well as a number of
problems in quantum information theory. For instance the Tsirelson problem,
teleportation, bounded entanglement, and superdense coding all admit
equivalent reformulations in terms of operator systems \cite{bodmann_frame_2007, bodmann_smooth_2007, bodmann_decoherence_2007,
johnston_computing_2009, johnston_minimal_2011}. Moreover Farenick, Kavruk,
Paulsen, and Todorov found in \cite{farenick_operator_2012,kavruk_nuclearity_2011,farenick_characterisations_2013}
reformulations of the Connes embedding problem in terms of tensor products
of finite-dimensional operator systems. Of course, the Connes embedding
problem is one of the most important open problems in operator algebras.

In the last few years several great advances have occurred in the theory of
operator systems. By work of Dritschell--McCullough, Arveson,
Davidson--Kennedy and others \cite{dritschel_boundary_2005,
arveson_noncommutative_2008, davidson_choquet_2013} the existence of
sufficiently many boundary representations to generate the C*-envelope has
been established, confirming Arveson's intuition from almost 50 years
earlier. (The existence of the C*-envelope had been previously established
by Hamana via his theory of injective envelopes \cite{hamana_injective_1979,hamana_injective_1979-1}.) Despite these recent
advances, classification results for operator systems are very rare. The
only more or less explicit classification result to this date is Arveson's
classification of operator systems acting on a finite-dimensional Hilbert
space from \cite{arveson_noncommutative_2010}.

We recall the definition of unital completely positive map, which is the
natural notion of morphism between operator systems. If $\phi\colon
X\rightarrow Y$ is a linear map between operator systems, then the $n^{\rm th}$ 
\emph{amplification} $id_{M_{n}}(\mathbb{C})\otimes \phi $ of $\phi $ is the linear map
from $M_{n}(\mathbb{C})\otimes X$ to $M_{n}(\mathbb{C})\otimes Y$ given by 
\begin{equation*}
\sum_{i}\alpha _{i}\otimes x_{i}\mapsto \sum_{i}\alpha _{i}\otimes \phi
\left( x_{i}\right)\text{.}
\end{equation*}
(Here, $\otimes$ denotes the algebraic tensor product.) Under the canonical
identification of $M_{n}(\mathbb{C})\otimes X$ and $M_{n}(\mathbb{C})\otimes Y$ with $M_{n}(X)$ and $
M_{n}(Y)$ this map has the form
\begin{equation*}
\left[ x_{ij}\right] \mapsto \left[ \phi (x_{ij})\right]
\end{equation*}
and it is often denoted by $\phi^{(n)}$. A linear map $\phi\colon
X\rightarrow Y$ is \emph{unital completely positive} if $\phi \left(
1\right) =1$ and for every $n$ the amplification $\phi^{(n)}=id_{M_{n}(\mathbb{C})}
\otimes \phi $ is positive, i.e., it sends positive elements in $M_n(X)$ to
positive elements in $M_n(Y)$.

The operator systems together with the unital completely positive maps form
a category, and isomorphism in this category is called \emph{complete order
isomorphism}. It is worth recalling that a unital map between operator
systems is completely positive if and only if it is completely contractive 
\cite[Proposition 3.6]{paulsen_completely_2002}.
As a consequence, complete order isomorphisms
are precisely the unital invertible complete isometries. Moreover a complete
order isomorphism between C*-algebras is automatically multiplicative \cite[
Corollary 1.3.10]{blecher_operator_2004}, and hence a *-isomorphism. In the
following, the classification of operator systems will always mean
classification up to complete order isomorphism.

It seems to be commonly believed that any meaningful classification of even
finitely generated operator systems is out of reach. In this paper we show
that, while on one hand the classification problem for all separable
operator systems is intractable due to a result of Sabok \cite
{sabok_completeness_2013}, on the other hand the classification problem for
finitely generated operator systems is tractable. More precisely we prove
that from the point of view of Borel complexity theory, the classification
of finitely generated operator systems is as low in complexity as it can
conceivably be.

Borel complexity theory is an area of mathematics that studies the relative
complexity of classification problems using tools and methods of descriptive
set theory. In this framework our notions of complexity and classifiability
are made precise. Moreover powerful tools and criteria---such as Hjorth's
theory of turbulence \cite{hjorth_classification_2000}---have been developed
to rigorously exclude the possibility of certain classification results.

In the setting of Borel complexity theory, a classification problem is
regarded as an equivalence relation on a standard Borel space. This covers,
perhaps after a suitable parametrization, most concrete classification
problems in mathematics. The fundamental notion of comparison between
equivalence relations is the following: if $E,F$ are equivalence relations
on standard Borel spaces $X,Y$, then $E$ is \emph{Borel reducible} to $F$ if
there exists a Borel function $f\colon X\rightarrow Y$ satisfying 
\begin{equation*}
x\mathrel{E}x^{\prime }\quad \iff \quad f(x)\mathrel{F}f(x^{\prime })\text{.}
\end{equation*}

Borel complexity theory provides a number of \emph{benchmarks} of complexity
to obtain a hierarchy of classification problems in mathematics. For example
an equivalence relation is \emph{smooth} or \emph{concretely classifiable}
if it is Borel reducible to the relation of {equality} on the set of real
numbers (or any Polish space). Classification results of this type are the
most satisfactory. For instance the structure theorem for countable
divisible abelian groups shows that relation of isomorphism for such groups
is smooth. Similarly the relation of isomorphism of UHF C*-algebras is
smooth \cite{glimm_certain_1960}. But such results are quite strong and
therefore rare. Indeed the classification of torsion-free abelian groups of
rank $1$ is already nonsmooth.

It is therefore natural to extend the notion of classifiability and to allow
invariants that are higher in the complexity hierarchy. Consider the
relation $\cong _{\mathcal{C}}$ of isomorphism within some class $\mathcal{C}
$ of countable structures. A relation is \emph{classifiable by countable
structures} if it is Borel reducible to $\cong _{\mathcal{C}}$ for some
class of countable structures $\mathcal{C}$. Most concrete classification
results in mathematics can be viewed as a classification by countable
structures. For instance, in Elliott's celebrated classification of
separable AF algebras from \cite{elliott_classification_1976}, the
invariants include countable ordered abelian groups with a distinguished
element (the ordered $K_{0}$ groups).

Although classifiability by countable structures is a very general notion,
there are many very complex classification problems that are not
classifiable by countable structures. For examples, Sasyk--T\"{o}rnquist and
Farah--Toms--T\"{o}rnquist have shown in \cite{sasyk_classification_2009,
farah_turbulence_2014} using Hjorth's theory of turbulence that separable II$
_{1}$ factors and separable simple nuclear {C}*-algebras are \emph{not
classifiable by countable structures}.

The next natural benchmark of complexity is given by orbit equivalence
relations of Polish group actions. A relation is then \emph{classifiable by
orbits }if it is Borel reducible to the orbit equivalence relation
associated with the continuous action of a Polish group on a Polish space.
It was shown in \cite{elliott_isomorphism_2013} that separable operator
systems are classifiable by orbits. In fact a result of Sabok from \cite
{sabok_completeness_2013} shows that separable operator systems (and in fact
even simple, separable, nuclear C*-algebras) have \emph{maximal complexity }
among classes that are classifiable by orbits.

In this paper we show that the situation is very different for finitely
generated operator systems. Namely the classification problem for finitely
generated operator systems is smooth. We prove this result by showing that
the complete order isomorphism classes of finitely generated operator
systems are naturally parametrized by the points of a Polish space.

We give a second proof of the same fact using the framework of logic for
metric structures. The logic for metric structures, or continuous logic, is
a generalization of the usual first order logic that is suitable for
application to functional-analytic structures such as operator systems and
C*-algebras. In this framework operator systems are regarded as structures
in a suitable language $\mathcal{L}_{OSy}$ \cite[Appendix B]
{goldbring_kirchberg_2014}; see also \cite[Section 3.3]
{elliott_isomorphism_2013}. One can then assign to every operator system its
theory as an $\mathcal{L}_{OSy}$-structure, which is the collection of
evaluations of $\mathcal{L}_{OSy}$-formulas. For finitely generated operator
systems this provides a concrete---albeit hard to calculate---smooth
complete invariant. The same result applies to all collections of structures
in some language for which the domain of quantification is compact, such as
finite-dimensional operator spaces up to complete isometry.

In the even more special case of operator systems generated by a unitary we
provide a more explicit complete invariant. For unitaries with three or less
points in their spectra, isomorphism of the operator systems they generate
is simply determined by the cardinality of the spectrum. In the case of
unitaries with five or more points in their spectrum, two operator systems
generated by unitaries are complete order isomorphic if and only if the
spectra of the generating unitaries are conjugate by a rigid motion of the
circle.

As a consequence of the smoothness result for finitely generated operator
systems, we draw similar conclusions for a natural relation of degree $1$
homeomorphism for compact subsets of $\mathbb{C}$ or, more generally, $
\mathbb{C}^{n}$. Here, compact subsets of $\mathbb{C}^n$ are said to be 
\emph{degree $1$ homeomorphic} if they are homeomorphic via a linear combination of 
1, $z$, $\bar{z}$, and $\bar{z}z$ (we call this a \emph{degree $1$ map}). For comparison, note that the
classification of compact subsets of $\mathbb{C}$ up to arbitrary
homeomorphism is not classifiable by countable structures. This latter
result is due to Farah--Toms--T\"{o}rnquist and was obtained using the
methods of \cite{hjorth_classification_2000}.

We also consider the classification problem for separable (unital) operator
spaces. An operator space is a linear subspace of a C*-algebra, while a
unital operator space is a unital linear subspace of a unital C*-algebra.
Similarly to operator systems, (unital) operator spaces admit abstract
characterizations; see \cite[Theorem 3.1]{ruan_subspaces_1988} and \cite[
Theorem 1.1]{blecher_metric_2011}. There are two natural relations of
equivalence for (unital) operator spaces: (unital) complete isometry and
(unital) completely bounded isomorphism. We show that the relation of
complete isomorphism of separable operator spaces has maximal complexity
among analytic equivalence relations. This follows directly from the
analogous result for Banach spaces \cite{ferenczi_complexity_2009}. Moreover
the relations of (unital) complete isometry of finitely generated (unital)
operator spaces is smooth. (It should be noted that all $N$-dimensional
operator spaces are completely isomorphic by \cite[Corollary 7.7]
{pisier_introduction_2003}.)

In addition to establishing the results described above, in the present
paper we take the broader aim of laying a foundation for the study of the
complexity of classification problems for operator systems and spaces. (The
initial results in this direction were obtained in \cite
{elliott_isomorphism_2013}.) To this purpose we consider many natural
parametrizations of operator spaces and systems, and then we show that they
are all (weakly) equivalent in the sense of \cite{farah_turbulence_2014}.
(The analogous results for parametrizations of C*-algebras have been
obtained in \cite{farah_turbulence_2014}.) As a consequence this implies
that any of these natural parametrizations can be used to assess the
complexity of some class of operator systems or spaces without affecting the
conclusions.

The rest of the paper is organized as follows. In Section \ref
{Section:parametrizing} we introduce many natural parametrizations of
operator systems and spaces. The full proof of the equivalence of these
parametrizations is given in the Appendices \ref{Appendix:equivalenceOSy}
and \ref{Appendix:equivalenceOSp}. In Section \ref{Section:separable} we
consider the classification problem for arbitrary separable operator systems
and spaces. In Section \ref{Section:fg} we specialize the analysis to
finitely generated operator systems, and show that they are concretely
classifiable. Finally in Section \ref{Section:structures} we present and
prove the more general smoothness result for $\mathcal{L}$-structures in
continuous logic.

In the following all the structures (Banach spaces, operator spaces,
operator systems, and $\mathcal{L}$-structures) are assumed to be separable,
complete, and nonzero. As usual in model theory we denote tuples of elements
by $\bar{x}$ and $\bar{y}$. We reserve the letter $z$ for a complex
variable, in which case $\bar{z}$ will denote the complex conjugate of $z$.
If $X$ is a set and $n\in \mathbb{N}$ we denote by $M_{n}(X)$ the set of $
n\times n$ matrices with entries from $X$. If $K$ is a field and $X$ is a $K$
-vector space, then $M_{n}(X)$ will be identified with the $K$-vector space $
M_{n}\left( K\right) \otimes X$.

The authors would like to thank David Blecher for referring them to the
articles  \cite{blecher_metric_2011,blecher_metric_2013}.

\section{Parametrizing operator systems and operator spaces\label
{Section:parametrizing}}

We consider in this section several natural standard Borel parametrizations
of the categories $\mathbf{OSy}$ and $\mathbf{OSp}$ of complete separable
operator systems and spaces. In Appendix \ref{Appendix:equivalenceOSy} and
Appendix \ref{Appendix:equivalenceOSp} we will show that all these
parametrizations are weakly equivalent.

\subsection{Standard Borel parametrizations}

Following \cite[Definition 2.1]{farah_turbulence_2014} a \emph{standard
Borel parametrization} of a category $\mathcal{C}$ is a pair $\left(
X,f\right) $ where $X$ is a standard Borel space and $f$ is a map from $X$
to the class of objects of $\mathcal{C}$, such that the range of $f$
contains an isomorphic copy of every object of $\mathcal{C}$. Two
parametrizations $\left( X,f\right) $ and $\left( Y,g\right) $ are called 
\emph{weakly equivalent} \cite[Definition 2.1]{farah_turbulence_2014} if
there are Borel functions $a:X\rightarrow Y$ and $b:Y\rightarrow X$ such
that $\left( g\circ a\right) (x)\cong f(x)$ and $\left( f\circ b\right)
(y)\cong g(y)$ for every $x\in X$ and $y\in Y$. If moreover one can choose $
a $ and $b$ to be injective, then the parametrizations $\left( X,f\right) $
and $\left( Y,g\right) $ are called \emph{equivalent}. As observed in \cite[
Section 2]{farah_turbulence_2014} it follows from the Borel version of the
Schr\"{o}der--Bernstein theorem \cite{kechris_classical_1995} that the
parametrizations $\left( X,f\right) $ and $\left( Y,g\right) $ are
equivalent if and only if there is a Borel isomorphism $\varphi $ from $X$
to $Y$ such that $\left( g\circ \varphi \right) (x)\cong f(x)$ and $\left(
f\circ \varphi ^{-1}\right) (y)\cong g(y)$ for every $x\in X$ and $y\in Y$.

Suppose that $\mathcal{C}$ is a category, and $\left( X,f\right) $ is a
parametrization of $\mathcal{C}$. A subcategory $\mathcal{C}_{0}$ of $
\mathcal{C}$ is \emph{Borel} (in the parametrization $f$) if the set
\begin{equation*}
X_{0}=\left\{ x\in X:f(x)\text{ is an object of }\mathcal{C}_{0}\right\}
\end{equation*}
is a Borel subset of $X$. The relation of isomorphism of $\mathcal{C}_{0}$
in the parametrization $f$ is the relation $E_{\cong }^{\mathcal{C}_{0}}$ on 
$X_{0}$ defined by
\begin{equation*}
x\mathrel{E}_{\cong }^{\mathcal{C}_{0}}{}x^{\prime }\text{ }\Leftrightarrow 
\text{ }f(x)\cong f(x^{\prime })\text{.}
\end{equation*}
It is clear that replacing $\left( X,f\right) $ with a weakly equivalent
parametrization $\left( X^{\prime },f^{\prime }\right) $ does not change the
notion of Borel subcategory. Moreover the isomorphism relation corresponding
to the parametrization $\left( X^{\prime },f^{\prime }\right) $ is Borel
bireducible with the isomorphism relation corresponding to the
parametrization $\left( X,f\right) $.

\subsection{Parametrizations of operator systems\label{Subsection:
parametrizations os}}

A $\mathbb{Q}(i)$\emph{-}$\ast $\emph{-vector} space $S$ is a $\mathbb{Q}(i)$
-vector space endowed with a conjugate linear involution $x\mapsto x^{\ast }$
. Denote by $\mathcal{V}$ the (countable) $\mathbb{Q}(i)$-$\ast $-vector
space of *-polynomials of degree at most $1$, with constant term, with
coefficients in $\mathbb{Q}(i)$, in the noncommutative variables $X_{n}$ for 
$n\in \mathbb{N}$. Similarly define $\mathcal{V}_{\mathbb{C}}$ the complex $
\ast $-vector space of *-polynomials of degree at most $1$, with constant
term, with complex coefficients, in the noncommutative variables $X_{j}$ for 
$j\in \mathbb{N}$. Suppose that $\left\{ \mathfrak{p}_{n}:n\in \mathbb{N}
\right\} $ is an enumeration of $\mathcal{V}$ such that $\mathfrak{p}_{1}$
is the constant polynomial $1$.

\subsubsection{The space $\Gamma $}

Let $H$ be the separable infinite-dimensional Hilbert space. Denote
by $B(H)$ the algebra of bounded linear operators on $H$. For every $n\in 
\mathbb{N}$ endow the $n$-ball $B_{n}(H)$ of $B(H)$ with the (compact
metrizable) weak operator topology. Finally endow $B(H)$ with the
corresponding inductive limit (standard) Borel structure, obtained by
setting $A\subset B(H)$ Borel iff $A\cap B_{n}(H)$ is Borel for every $n\in 
\mathbb{N}$. Denote by $\Gamma $ the set $B(H)^{\mathbb{N}}$ of sequences in 
$B(H)$ endowed with the product Borel structure. This can be seen as a
standard Borel parametrization of $\mathbf{OSy}$. For $\gamma \in \Gamma $
and $p\in \mathcal{V}_{\mathbb{C}}$ define $p(\gamma )$ to be the element of 
$B(H)$ obtained by replacing the variable $X_{i}$ with $\gamma _{i}$ and
interpreting a constant $c$ as the corresponding multiple $cI$ of the
identity operator. Every element $\gamma =\left( \gamma _{n}\right) _{n\in 
\mathbb{N}}$ of $\Gamma $ codes the separable operator system $\mathcal{OS}
y(\gamma )$ obtained by taking the closure in the norm topology of the set $
\left\{ p(\gamma ):p\in \mathcal{V}_{\mathbb{C}}\right\} $.

The space $\Gamma _{\mathcal{V}}$ of unital linear self-adjoint functions
from $\mathcal{V}$ to $B(H)$ is a Borel subset of $B(H)^{\mathcal{V}}$
endowed with the product Borel structure. This is also a standard Borel
parametrization of $\mathbf{OSy}$. An element $\varphi $ of 
${\Gamma }_{\mathcal{V}}$ codes the operator system $\mathcal{OS}y\left(
\varphi \right) $ which is the closure of the range of $\varphi $. The
function $\gamma \mapsto \varphi _{\gamma }$ from $\Gamma $ to $\Gamma _{
\mathcal{V}}$ defined by $\varphi _{\gamma }\left( p\right) =p(\gamma )$ is
a Borel isomorphism witnessing that the parametrizations $\Gamma $ to $
\Gamma _{\mathcal{V}}$ are equivalent.

\subsubsection{The space $\Xi $}

Denote by $\Xi $ the space of $\delta=\left( \delta _{n}\right) _{n\in 
\mathbb{N}}$ where $\delta_n\in \mathbb{R}^{M_{n}(\mathcal{V})}$ is such
that for some operator system $X$ and some nonzero dense sequence $\gamma
\in X$,
\begin{equation*}
\delta _{n}\left( \left[ p_{ij}\right] \right) =\left\Vert \left[
p_{ij}(\gamma )\right] \right\Vert _{M_{n}(X)}\text{.}
\end{equation*}
The ultraproduct construction shows that $\Xi $ is a Borel set. The operator
system $\mathcal{OS}y(\delta )$ associated with $\delta $, which is
completely isometric to $X$ as above, can be described as the Hausdorff
completion of $\mathcal{V}$ with respect to the seminorm $p\mapsto \delta
_{1}\left( p\right) $.

\subsubsection{The space $\widehat{\Xi }$}

Denote by $M_{n}(S)$ the $\mathbb{Q}(i)$-$\ast $-vector space of $n\times n$
matrices over $S$. A \emph{matrix order }on $S$ is a collection $\left(
C_{n}\right) _{n\in \mathbb{N}}$ of cones $C_{n}$ on $M_{n}(S)$ such that

\begin{enumerate}
\item $C_{n}\cap \left( -C_{n}\right) =\left\{ 0\right\} $, and

\item for every $n,m\in \mathbb{N}$ and every $n\times m$ matrix $A$ with
coefficients in $\mathbb{Q}(i)$, $A^{\ast }C_{n}A\subset C_{m}$.
\end{enumerate}

We call a selfadjoint $e\in S$ an \emph{order unit} if for every selfadjoint $x\in S$ there is 
$r\in \mathbb{Q}_{+}$ such that $re+x\in C_{1}$. An order unit is \emph{
Archimedean }if $re+x\in C_{1}$ for all $r\in \mathbb{Q}_{+}$ implies $x\in
C_{1}$. We call $e$ an Archimedean matrix order unit provided that $
I_{n}\otimes e\in M_{n}(S)$ is an Archimedean order unit for $M_{n}(S)$.

Suppose  $S$ is a matrix ordered $\mathbb{Q}(i)$-$\ast $-vector space
with an Archimedean matrix order unit. The same argument as \cite[page 176]
{paulsen_completely_2002} shows that $C_{n}$ is a full cone for every $n\in 
\mathbb{N}$, i.e.\ $C_{n}-C_{n}=M_{n}(S)$. Moreover the proof of \cite[
Proposition 13.3]{paulsen_completely_2002} yields that
\begin{equation*}
\left\Vert x\right\Vert =\inf \left\{ r:
\begin{bmatrix}
rI_{n} & x \\ 
x & rI_{n}
\end{bmatrix}
\in C_{2n}\right\}
\end{equation*}
is a norm on $M_{n}(S)$, and $C_{n}$ is a closed subset of $M_{n}(S)$ in the
topology induced by such norm.

The completion $\widehat{S}$ of $S$ with respect to such norm is then a 
\emph{complex} $\ast $-vector space. Moreover the closure $\widehat{C}_{n}$
of $C_{n}$ inside $M_{n}(\widehat{S})$ for $n\in \mathbb{N}$ form a matrix
order on $\widehat{S}$ with Archimedean matrix order unit $e$ . Therefore by
the abstract characterization of operator systems due to Choi and Effros 
\cite[Theorem 13.1]{paulsen_completely_2002} $S$ is completely isometrically
isomorphic to an operator system.

In view of the above observations we consider the Borel space $\widehat{\Xi }
$ of tuples 
\begin{equation*}
\xi =\left( f_{\xi },g_{\xi },h_{\xi },(C_{\xi,n })_{n\in \mathbb{N}},e_{\xi
}\right) \in \mathbb{N}^{\mathbb{N}\times \mathbb{N}}\times \mathbb{N}^{
\mathbb{Q}(i)\times \mathbb{N}}\times \mathbb{N}^{\mathbb{N}}\times
\prod_{n\in \mathbb{N}}2^{\mathbb{N}^{2}}\times \mathbb{N}
\end{equation*}
that code on $\mathbb{N}$ a $\mathbb{Q}(i)$-$\ast $-vector space structure $
S_{\xi }$ by setting
\begin{eqnarray*}
n+_{\xi }m &=&f_{\xi }\left( n,m\right) \text{,} \\
q\cdot _{\xi }n &=&g_{\xi }\left( q,n\right) \text{,} \\
n^{\ast _{\xi }} &=&h_{\xi }\left( n\right) \text{,}
\end{eqnarray*}
where $C_{\xi ,n}\subset M_{n}(\mathbb{N})$ is the positive cone, and $
e_{\xi }$ is the Archimedean matrix order unit. The corresponding norm on $
M_{n}(\mathbb{N})$ is denoted by $\left\Vert \cdot \right\Vert _{n,\xi }$.
The set $\widehat{\Xi }$ is Borel since the axioms defining a $\mathbb{Q}(i)$
-$\ast $-vector space are Borel conditions. The operator system $\mathcal{OS}
y(\xi )$ coded by $\xi $ is the completion of $S_{\xi }$. Note in particular
that the scalar multiplication can be uniquely extended on $\mathcal{OS}
y(\xi )$ from $\mathbb{Q}(i)$ to $\mathbb{C}$ and hence $\mathcal{OS}y(\xi )$
is indeed an operator system.

\subsubsection{Parametrizations as models of a theory\label{Subsubsection:
models}}

The Choi-Effros abstract characterization of operator systems allows one to
describe operator systems as models of a theory $\mathcal{T}_{OSy}$ in a
suitable language $\mathcal{L}_{OSy}$. The details can be found in \cite[
Appendix B]{goldbring_kirchberg_2014}. This allows one to define other
parametrizations of operator systems as models of $\mathcal{T}_{OSy}$ as in 
\cite[Subsection 3.4]{elliott_isomorphism_2013} or \cite[Section 1]
{ben_yaacov_lopez-escobar_2014}, and as Polish structures as in \cite[
Subsections 2.1 and 3.3]{elliott_isomorphism_2013}. Lemma 2.3 of \cite
{elliott_isomorphism_2013} together with the argument in \cite[Subsection 3.4
]{elliott_isomorphism_2013} show that these parametrizations are all weakly
equivalent to each other. Moreover they can be easily seen to be equivalent
to the parametrization $\widehat{\Xi }$ defined above using again \cite[
Lemma 2.3]{elliott_isomorphism_2013}. The analogous argument for C*-algebras
is presented in \cite[Section 3.1]{elliott_isomorphism_2013}.

\subsection{Parametrizations of operator spaces}

One can define parametrizations for the category $\mathbf{OSp}$ of operator
spaces in an analogous was for operator systems. To this purpose one can
consider $\mathcal{W}$ to be the $\mathbb{Q}(i)$-vector space of
noncommutative polynomials in the variables $X_{i}$ for $i\in \mathbb{N}$
and with no constant term. Fix an enumeration $(\mathfrak{q}_{n}\mathfrak{)}
_{n\in \mathbb{N}}$ of $\mathcal{W}$. The space $\Gamma $ is just the set of
sequences $B(H)^{\mathbb{N}}$. The space $\Gamma _{\mathcal{W}}$ is the set
of linear functions from $\mathcal{W}$ to $B(H)$. It can be easily verified
that $\Gamma $ and $\Gamma _{\mathcal{W}}$ are equivalent parametrizations.

The space $\Xi $ is the Borel set of $\delta =\left( \delta _{n}\right)
_{n\in \mathbb{N}}$, where $\delta _{n}\in \mathbb{R}^{M_{n}(W)}$ is such
that there is an operator system $X$ and a nonzero sequence $\gamma $ in $X$
such that
\begin{equation*}
\delta _{n}(\left[ p_{ij}\right] )=\left\Vert \left[ p_{ij}(\gamma )\right]
\right\Vert _{M_{n}(X)}
\end{equation*}
for every $n\in \mathbb{N}$ and $\left[ p_{ij}\right] \in M_{n}(W)$. One can
describe $\Xi $ as the set of $\delta $ such that setting 
\begin{equation*}
\left\Vert \left[ q_{k_{ij}}\right] \right\Vert =\delta _{n}(\left[ k_{ij}
\right] )
\end{equation*}
defines a nonzero \emph{operator seminorm structure }on $\mathcal{W}$; see 
\cite{blecher_operator_2004}. This makes it apparent that $\Xi $ is a Borel
set. The operator space $\mathcal{OS}p(\delta )$ associated with $\delta $
is the Hausdorff completion of $\mathcal{W}$---as described in \cite
{blecher_operator_2004}---with respect to the operator seminorm structure
induced by $\delta $.

Let us say that a $\mathbb{Q}(i)$-vector space $S$ is $L^{\infty }$
-matricially normed if, for every $n\in \mathbb{N}$, the space of matrices $
M_{n}(S)$ is endowed with a norm $\left\Vert \cdot \right\Vert _{n}$ such
that the following hold.

\begin{enumerate}
\item For every $n,m\in \mathbb{N}$, $x\in M_{n}(S)$, and $y\in M_{m}(S)$ 
\begin{equation*}
\left\Vert 
\begin{bmatrix}
x & 0 \\ 
0 & y
\end{bmatrix}
\right\Vert _{n,m}=\max \left\{ \left\Vert x\right\Vert _{n},\left\Vert
y\right\Vert _{m}\right\} \text{;}
\end{equation*}

\item For every $n,m,k\in \mathbb{N}$, $a\in M_{n,k}(\mathbb{Q}(i))$, $x\in
M_{k}(S)$, and $b\in M_{k,m}(\mathbb{Q}(i))$,
\begin{equation*}
\left\Vert axb\right\Vert _{m}\leq \left\Vert a\right\Vert \left\Vert
x\right\Vert _{k}\left\Vert b\right\Vert
\end{equation*}
where $\left\Vert a\right\Vert $ and $\left\Vert b\right\Vert $ are the
operator norms of $a$ and $b$ regarded as linear operators from $\mathbb{C}
^{k}$ to $\mathbb{C}^{n}$ and from $\mathbb{C}^{m}$ to $\mathbb{C}^{k}$.
\end{enumerate}

Define $\widehat{\Xi }$ to be the space of tuples 
\begin{equation*}
\xi =\left( f,g,\left( N_{n}\right) _{n\in \mathbb{N}}\right) \in \mathbb{N}
^{\mathbb{N}^{2}}\times \mathbb{N}^{\mathbb{Q}(i)\times \mathbb{N}}\times
\prod_{n\in \mathbb{N}}\mathbb{R}^{M_{n}(\mathbb{N})}
\end{equation*}
such that $\xi $ codes an $L^{\infty }$-matricially normed $\mathbb{Q}(i)$
-vector space structure $S_{\xi }$ on $\mathbb{N}$ by setting
\begin{align*}
n+_{\xi }m& =f_{\xi }\left( n,m\right) \text{,} \\
q\cdot _{g}m& =g_{\xi }\left( q,m\right) \text{, and} \\
\left\Vert \left[ m_{ij}\right] \right\Vert & =N_{n,m}\left( \left(
m_{ij}\right) \right)
\end{align*}
where $\left[ m_{ij}\right] $ is an $n\times m$ matrix. The fact that the
axioms of a $L^{\infty }$-matricially normed $\mathbb{Q}(i)$-vector spaces
are Borel conditions shows that $\widehat{\Xi }$ is a Borel set. The
operator space $\mathcal{OS}p(\xi )$ coded by $\xi $ is the completion of $
S_{\xi }$. The scalar multiplication on $\mathcal{OS}y(\xi )$ can be
uniquely extended from $\mathbb{Q}(i)$ to $\mathbb{C}$ and hence $\mathcal{OS
}p(\xi )$ is indeed an operator space.

Finally one can regard operator spaces as models of a theory in the logic
for metric structures; see \cite[Appendix B]{goldbring_kirchberg_2014}. This
gives other natural parametrizations for the category of operator spaces as
in \cite[Subsection 3.3]{elliott_isomorphism_2013}. The same observations as
the ones presented for operator systems in \ref{Subsubsection: models} apply.

\section{The classification of all separable operator systems and spaces 
\label{Section:separable}}

In this section we identify the complexity of the classification problem for
operator spaces. The corresponding problem for separable operator systems
has already been identified. By \cite[Theorem 1.1]{elliott_isomorphism_2013}
the complete order isomorphism relation of operator systems is Borel
reducible to a Polish group action. And it follows from \cite[Theorem 1.1]
{sabok_completeness_2013} that such a relation is in fact maximal among all
relations that lie below a Polish group action. (Observe that two
C*-algebras are *-isomorphic if and only if they are complete order
isomorphic by \cite[Corollary 1.3.10]{blecher_operator_2004}.) We now show
that the classification of operator spaces up to completely bounded
isomorphism is maximally complex among all analytic equivalence relations.
This has been independently observed by N.\ Christopher Phillips
(unpublished).

It is easy to see that the completely bounded isomorphism equivalence
relation is analytic (say in the parameterization $\Xi$). To show that it is
maximal among analytic equivalence relations, we need only find a Borel
reduction from another relation that is known to be complete. For this we
will use the isomorphism relation on Banach spaces. This latter relation is
defined on the standard Borel space $\mathfrak{B}$ of closed subspaces of $C
\left[ 0,1\right] $, endowed with the Effros Borel structure. (Recall that
any Banach space is isometrically isomorphic to a closed subspace of $C\left[
0,1\right] $.) It is shown in Theorem~5 of \cite{ferenczi_complexity_2009}
that the isomorphism relation on $\mathfrak{B}$ is indeed complete for
analytic equivalence relations.

\begin{theorem}
The classification problem for separable Banach spaces up to isomorphism is
Borel reducible to the classification problem for separable operator spaces
up to completely bounded isomorphism. As a consequence, the latter
equivalence relation is complete analytic.
\end{theorem}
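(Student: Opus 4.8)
The plan is to construct an explicit Borel map from the space $\mathfrak{B}$ of closed subspaces of $C[0,1]$ into one of the standard parametrizations of operator spaces (say $\Xi$ or $\Gamma$), sending each Banach space $V$ to a canonically associated operator space $\mathcal{O}(V)$, in such a way that two Banach spaces are isomorphic precisely when the associated operator spaces are completely boundedly isomorphic. The natural candidate is the minimal (or maximal) operator space structure on $V$; concretely I would use $\mathrm{MIN}(V)$, which realizes $V$ as an operator space by embedding it into a commutative C*-algebra $C(K)$ where $K$ is the unit ball of $V^{*}$ with the weak* topology. The key functorial fact I would invoke is that for the minimal quantization, the completely bounded norm of a map between two minimal operator spaces coincides (up to the universal constant, or exactly, depending on the chosen normalization) with its ordinary bounded norm; in particular a linear map $T\colon \mathrm{MIN}(V)\to \mathrm{MIN}(W)$ is completely bounded with completely bounded inverse if and only if $T$ is a Banach-space isomorphism. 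This immediately gives the biconditional needed for a reduction.

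Granting that functorial input, the proof has two halves. First, the \emph{forward} implication: if $V\cong W$ as Banach spaces via a bounded invertible $T$, then the same $T$ viewed as a map $\mathrm{MIN}(V)\to \mathrm{MIN}(W)$ is a complete isomorphism, so $\mathcal{O}(V)$ and $\mathcal{O}(W)$ are completely boundedly isomorphic. Second, the \emph{backward} implication: if $\Phi\colon \mathrm{MIN}(V)\to \mathrm{MIN}(W)$ is a complete isomorphism, then in particular $\Phi$ is a bounded invertible linear map with bounded inverse, hence a Banach-space isomorphism of $V$ onto $W$. Both halves rest on the defining property of $\mathrm{MIN}$, so the mathematical content is essentially the choice of the right quantization functor.

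The remaining work, and the step I expect to be the main obstacle, is \emph{Borelness} of the assignment $V\mapsto \mathcal{O}(V)$ as a map into the fixed standard Borel parametrization. A closed subspace $V\subseteq C[0,1]$ is presented through the Effros Borel structure, typically by a Borel-selected sequence of vectors that is dense in $V$. I would first use a Borel selection theorem (Kuratowski--Ryll-Nardzewski) to extract from the code of $V$ a dense sequence $(v_{n})$, and then express the minimal operator space matrix norms of $\mathrm{MIN}(V)$ as an explicit formula in the $v_{n}$. The minimal matrix norm of a matrix $[a_{ij}]\in M_{n}(V)$ is $\sup\{\,\|[f(a_{ij})]\|_{M_{n}(\mathbb{C})} : f\in \mathrm{Ball}(V^{*})\,\}$, and the point is that this supremum can be computed as a Borel function of the matrix entries written in terms of the dense sequence, since it is a countable supremum over a weak*-dense set of functionals (or, concretely, a supremum realized on $C(K)$ using the inclusion $V\hookrightarrow C[0,1]$). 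Translating this formula into the chosen parametrization $\Xi$ (whose points record exactly these matrix norm data on the polynomial algebra $\mathcal{W}$) then exhibits the assignment as Borel.

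Once the Borel map $f\colon \mathfrak{B}\to \Xi$ with $V\cong_{\mathrm{Ban}} W \iff \mathcal{O}(V)\cong_{cb}\mathcal{O}(W)$ is in hand, the theorem follows: by Theorem~5 of \cite{ferenczi_complexity_2009} the isomorphism relation on $\mathfrak{B}$ is complete for analytic equivalence relations, so its Borel reducibility to completely bounded isomorphism of operator spaces forces the latter to be complete analytic as well. I would close by noting, as the statement already observes, that completely bounded isomorphism is itself analytic in the parametrization $\Xi$, so it genuinely sits at the top of the analytic hierarchy rather than above it.
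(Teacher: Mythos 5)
Your proposal is correct and follows essentially the same route as the paper: both reduce via the minimal quantization $V\mapsto\mathrm{MIN}(V)$, invoke the standard fact that Banach-space isomorphism coincides with completely bounded isomorphism for minimal operator space structures, and establish Borelness by combining a Kuratowski--Ryll-Nardzewski dense selection with the expression of the minimal matrix norms as a countable supremum over a weak*-dense set of functionals in the ball of $C[0,1]'$, landing in the parametrization $\Xi$. The only cosmetic difference is that you phrase the key input as the isometric identity $\|T\|_{cb}=\|T\|$ for maps into $\mathrm{MIN}$ spaces, whereas the paper cites the equivalent statement from Blecher--Le Merdy directly.
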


\begin{proof}
Suppose that $X\in \mathfrak{B}$ is a closed subspace of $C\left[ 0,1\right] 
$. The minimal operator space structure on $X$ is defined by
\begin{equation*}
\left\Vert \left[ x_{ij}\right] \right\Vert =\sup \left\{ \left\Vert \left[
\varphi (x_{ij})\right] \right\Vert :\varphi \in X^{\prime }\text{, }
\left\Vert \varphi \right\Vert \leq 1\right\} \text{.}
\end{equation*}
Observe that by the Hahn--Banach theorem, it is equivalent to let $\varphi $
range over a weak*-dense subset of the unit ball of $C\left[ 0,1\right]
^{\prime }$. It is well known that two Banach spaces $X$ and $Y$ are
isomorphic if and only if they are completely isomorphic as operator spaces
when endowed with their minimal operator space structures; see \cite[1.2.21]
{blecher_operator_2004}. Therefore the assignment $X\mapsto X_{\min }$,
where $X_{\min }$ is the minimal operator structure on $X$, is a reduction
from the relation of isomorphism of Banach spaces to the relation of
complete isomorphism of operator spaces. We need only verify that this
reduction is given by a Borel function.

For this, fix a weak*-dense subset $D$ of the unit ball of $C\left[ 0,1
\right] ^{\prime }$. By the Kuratowski--Ryll-Nardzewski theorem \cite[
Theorem 12.13]{kechris_classical_1995} there is a sequence of Borel
functions $\sigma _{n}:\mathfrak{B}\rightarrow C\left[ 0,1\right] $ for $
n\in \mathbb{N}$ such that 
\begin{equation*}
\left\{ \sigma _{n}(X):n\in \mathbb{N}\right\}
\end{equation*}
is a dense subset of $X$ for every $X\in \mathfrak{B}$. For each $X\in 
\mathfrak{B}$ and $q\in \mathcal{W}$ define $q^{X}$ to be the element of $X$
obtained from $q$ replacing $X_{i}$ with $\sigma _{i}(X)$ for every $i\in 
\mathbb{N}$. We define the code $\delta _{X}\in X$ for the operator space $
X_{\min }$ by setting 
\begin{equation*}
\delta _{X}\left( \left[ q_{ij}\right] \right) =\sup \left\{ \left\Vert
\varphi (q_{ij}^{X})\right\Vert :\varphi \in D\right\} \text{.}
\end{equation*}
Then the function $X\mapsto \delta _{X}$ from $\mathfrak{B}$ to $\Xi $ is
Borel, as desired.
\end{proof}

\section{The classification of finitely generated operator systems\label
{Section:fg}}

We will show that, in contrast with the result from Section \ref{Section:separable}, the classification problem for finitely
generated operator systems is smooth. For convenience we will work in the
parametrization $\Gamma $. Fix $N\in \mathbb{N}$ and denote by $\Gamma
_{\leq N}$ the set of $\gamma \in \Gamma $ such that $\mathcal{OS}y(\gamma )$
has dimension at most $N$. It will be shown in Appendix \ref
{Appendix:equivalenceOSy} that $\Gamma _{\leq N}$ is a Borel subset of $
\Gamma $. Denote by $\Gamma _{N}$ the Borel set $\Gamma _{\leq N}\setminus
\Gamma _{\leq (N-1)}$, which provides a standard Borel parametrization of
operator systems of dimension $N$. Further define $\widehat{\Gamma }_{N}$ to
be the set of linearly independent tuples $\left( x_{1},\ldots ,x_{N}\right) 
$ such that $\mathrm{span}\left\{ x_{1},\ldots ,x_{N}\right\} $ is an
operator system. It will be shown in Appendix \ref{Appendix:equivalenceOSy}
that $\widehat{\Gamma }_{N}$ is a Borel subset of $B(H)^{N}$, and moreover
the parametrizations $\Gamma _{N}$ and $\widehat{\Gamma }_{N}$ of $N$
-dimensional operator systems are weakly equivalent.

\subsection{The classification of all finitely generated operator systems}

In this subsection we consider the classification problem for arbitrary
finitely generated operator systems.

\begin{theorem}
\label{Theorem: iso fg osy}The relation of complete order isomorphism of
finitely generated operator systems is smooth.
\end{theorem}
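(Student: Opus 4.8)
The plan is to show that for each fixed $N$, the relation of complete order isomorphism on $N$-dimensional operator systems (parametrized by $\widehat{\Gamma}_N$) is smooth, and then to take a countable disjoint union over $N$. Since a finitely generated operator system is finite-dimensional precisely when it is generated by finitely many elements subject to the closure being finite-dimensional, the set of all finitely generated operator systems decomposes as $\bigcup_N \Gamma_N$ (with $\Gamma_N$ Borel by the result cited from Appendix \ref{Appendix:equivalenceOSy}), and a countable disjoint union of smooth equivalence relations is smooth. So the entire problem reduces to the $N$-dimensional case, and it suffices to produce a Borel map assigning to each $N$-dimensional operator system a point in a Polish space that is a complete invariant for complete order isomorphism.

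The key idea is that an $N$-dimensional operator system $X$ is completely determined, up to complete order isomorphism, by its entire matrix order structure expressed in terms of a fixed basis. Concretely, working in the parametrization $\widehat{\Gamma}_N$, I would take a tuple $(x_1,\dots,x_N)$ and record, for every $n$ and every matrix $[p_{ij}] \in M_n$ of linear combinations of the basis elements with (say) $\mathbb{Q}(i)$-coefficients, the norm $\|[p_{ij}(\gamma)]\|_{M_n(X)}$ together with the data needed to recover positivity. This is exactly the kind of data captured by the parametrization $\Xi$ (or $\widehat{\Xi}$): the point is that the function $\delta = (\delta_n)_n$ with $\delta_n([p_{ij}]) = \|[p_{ij}(\gamma)]\|_{M_n(X)}$ lives in the Polish space $\prod_n \mathbb{R}^{M_n(\mathcal{V})}$, and by the Choi--Effros characterization this sequence of matrix norms (equivalently the associated matrix order) determines $X$ up to complete order isomorphism. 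Thus the assignment $\gamma \mapsto \delta$ is a candidate reduction into a Polish space, and smoothness amounts to checking (a) that it is Borel and (b) that $\delta_\gamma = \delta_{\gamma'}$ if and only if $\mathcal{OS}y(\gamma) \cong \mathcal{OS}y(\gamma')$.

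The subtlety, and the main obstacle, is direction (b): distinct bases give distinct $\delta$'s for the same operator system, so the raw invariant $\delta$ is \emph{not} itself a complete invariant --- it is a complete invariant only after quotienting by the choice of basis, which is a change of linear coordinates in $\mathrm{GL}_N(\mathbb{C})$. To get genuine smoothness I must eliminate this ambiguity. The natural fix is to canonically normalize the basis: since $X$ is finite-dimensional and unital, I would fix the first basis vector to be the unit $1$ and then select the remaining basis vectors by a Borel uniformization, e.g.\ choosing a distinguished basis of the selfadjoint part via a Borel selector (Kuratowski--Ryll-Nardzewski applied to a Borel relation picking out, say, the lexicographically least basis in an appropriate parametrization). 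Alternatively, and more robustly, I would not normalize at all but instead pass to the quotient: define $X \cong X'$ iff there exists $T \in \mathrm{GL}_N(\mathbb{C})$ carrying the matrix order of one onto the other. This makes the isomorphism relation the orbit equivalence relation of the \emph{finite-dimensional} Lie group $\mathrm{GL}_N(\mathbb{C})$ (really a unital version, fixing $1$) acting continuously on the compact-fibered Polish space of matrix-ordered structures. The crucial structural fact making this smooth rather than merely classifiable-by-orbits is that the acting group is \emph{locally compact} (indeed a Lie group): orbit equivalence relations of continuous actions of locally compact second countable groups on Polish spaces are smooth, since such actions have locally closed orbits and admit Borel transversals (by a theorem in the Borel-complexity literature, cf.\ the $F_\sigma$/locally-closed-orbit criterion).

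Carrying this out, the steps in order would be: first, reduce to fixed dimension $N$ via the Borel decomposition $\bigcup_N \Gamma_N$ and countable-union stability of smoothness; second, pass to the parametrization $\widehat{\Gamma}_N$ of based $N$-dimensional systems and record the Borel map $\gamma \mapsto \delta_\gamma$ into the Polish space of matrix-norm data, verifying Borelness by the ultraproduct/approximation arguments already used to show $\Xi$ is Borel; third, identify complete order isomorphism with the orbit relation of the unital general linear group $G = \{T \in \mathrm{GL}_N(\mathbb{C}) : T1 = 1\}$ acting by change of basis on these data, using Choi--Effros to guarantee that matching matrix orders corresponds exactly to a complete order isomorphism; and fourth, invoke local compactness of $G$ to conclude the orbit relation is smooth and hence, composing with the Borel reduction, that complete order isomorphism of finitely generated operator systems is smooth. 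The step I expect to require the most care is the second combined with the isomorphism-to-orbit identification in the third: one must confirm that two based systems are complete order isomorphic exactly when their full matrix-order data agree up to the $G$-action --- this is where the Choi--Effros theorem does the essential work, since it certifies that the abstract matrix order, and nothing more, recovers the operator system.
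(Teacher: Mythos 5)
Your reduction to fixed dimension $N$, the passage to the matrix-norm data $\delta_\gamma$, and the identification of complete order isomorphism with the orbit equivalence relation of the unital change-of-basis group $G=\{T\in \mathrm{GL}_N(\mathbb{C}):T1=1\}$ are all sound, and up to that point your setup runs parallel to the paper's. The fatal step is the last one: the principle you invoke --- that orbit equivalence relations of continuous actions of locally compact second countable groups on Polish spaces are smooth, because such actions have locally closed orbits and Borel transversals --- is false. The standard counterexample is the action of $\mathbb{Z}$ (a locally compact, second countable group) on $\mathbb{T}$ by an irrational rotation, or the odometer action of $\mathbb{Z}$ on $2^{\mathbb{N}}$: the orbits are dense (so not locally closed), there is no Borel transversal, and the orbit relation is Borel bireducible with $E_0$, the canonical non-smooth equivalence relation. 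What is true for locally compact second countable groups is only that the orbit relation is Borel, indeed essentially countable (a theorem of Kechris), and essentially countable relations need not be smooth. By the Glimm--Effros dichotomy, local closedness of orbits (equivalently, existence of a Borel transversal) is \emph{equivalent} to smoothness here, so your step 4 assumes exactly what needs to be proved.

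The missing content is a compactness argument specific to this action, and it is precisely where the paper does its real work. One must show that if for every $\varepsilon>0$ there is a change of basis implementing a unital $(1+\varepsilon)$-almost complete isometry between two structures, then there is an exact complete order isomorphism; this is what makes the classes closed in the appropriate pseudometric (equivalently, makes the orbits locally closed). The paper proves this as Lemma \ref{Lemma: separates}: since unital completely positive maps are completely contractive, the almost-isomorphisms form a bounded family, and a normal-families/subsequence argument produces a limit map which is a unital complete isometry, hence a complete order isomorphism. The paper then packages the quotient as a compact metric space $OSy(N)$ (compactness proved via Auerbach bases, Arzel\`a--Ascoli, and an ultraproduct embedding) and exhibits a Borel reduction to equality on it, never invoking any general principle about group actions. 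Your approach can be repaired, but only by proving this convergence-of-witnesses statement, i.e., by supplying the content of Lemma \ref{Lemma: separates} rather than the false general theorem.
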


Theorem \ref{Theorem: iso fg osy} can be seen as a generalization of the
main results of \cite{arveson_noncommutative_2010}, where Arveson provided a
concrete classification for operator systems with finite-dimensional
C*-envelope.

In order to prove Theorem \ref{Theorem: iso fg osy} it is enough to show
that for every $N\in \mathbb{N}$ such relation is smooth when restricted to
operator systems of dimension $N$. For convenience we will work in the
parametrization for $N$-dimensional operator systems $\widehat{\Gamma }_{N}$
. We proceed to define a compact metrizable space of complete isometry
classes of $N$-dimensional operator systems. 

The argument is analogous to the one for operator spaces from \cite[Chapter
21]{pisier_introduction_2003}---see in particular Remark 21.2, Lemma 21.7,
and Exercise 21.1 from \cite{pisier_introduction_2003}. Suppose that $X$ and 
$Y$ are $N$-dimensional operator systems with Archimedean matrix order units 
$e_{X}$ and $e_{Y}$. Fix $n\in \mathbb{N}$ and define $d_{n}\left(
X,Y\right) $ to be the infimum of
\begin{equation*}
\max \left\{ ||u\left( e_{X}\right) -e_{Y}||,\log ||id_{M_{n}(\mathbb{C}
)}\otimes u||,\log ||id_{M_{n}(\mathbb{C})}\otimes u^{-1}||\right\}
\end{equation*}
where $u$ ranges over all isomorphisms from $X$ to $Y$.

\begin{lemma}
\label{Lemma: separates}If $X$ and $Y$ are $N$-dimensional operator systems,
then $X$ and $Y$ are completely order isomorphic if and only if $d_{n}\left(
X,Y\right) =0$ for every $n\in \mathbb{N}$.
\end{lemma}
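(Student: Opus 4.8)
The forward direction is immediate. If $X$ and $Y$ are completely order isomorphic, then by the remarks recalled in the introduction there is a unital invertible complete isometry $u\colon X\to Y$, and for this single $u$ one has $u(e_X)=e_Y$ and $\|id_{M_{n}(\mathbb{C})}\otimes u\|=\|id_{M_{n}(\mathbb{C})}\otimes u^{-1}\|=1$ for every $n$, so the maximum defining $d_n(X,Y)$ vanishes. Hence $d_n(X,Y)=0$ for all $n$. (Note that $d_n(X,Y)\ge 0$ always, since for any linear isomorphism $u$ one has $\|id_{M_{n}(\mathbb{C})}\otimes u\|\,\|id_{M_{n}(\mathbb{C})}\otimes u^{-1}\|\ge\|id_{M_{n}(\mathbb{C})}\otimes id_X\|=1$, so at least one of the two logarithms is nonnegative.)

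For the converse I would fix bases identifying both $X$ and $Y$ with $\mathbb{C}^{N}$, so that the linear isomorphisms $u\colon X\to Y$ become the elements of $GL_N(\mathbb{C})$ and each quantity $\|id_{M_{n}(\mathbb{C})}\otimes u\|$ becomes a continuous function of the entries of $u$ (being the operator norm of a linear map between the \emph{fixed} finite-dimensional normed spaces $M_n(X)$ and $M_n(Y)$). Assuming $d_n(X,Y)=0$ for all $n$, for each $n$ I would choose a linear isomorphism $u_n$ with
\[
\max\{\|u_n(e_X)-e_Y\|,\ \log\|id_{M_{n}(\mathbb{C})}\otimes u_n\|,\ \log\|id_{M_{n}(\mathbb{C})}\otimes u_n^{-1}\|\}<1/n.
\]
Using the standard monotonicity $\|id_{M_{m}(\mathbb{C})}\otimes u\|\le\|id_{M_{n}(\mathbb{C})}\otimes u\|$ for $m\le n$ (the corner embedding $M_m\hookrightarrow M_n$ is completely isometric), the amplification bounds give $\|u_n\|,\|u_n^{-1}\|<e^{1/n}\le e$, so the sequence $(u_n)$ lies in the compact set $\{v\in GL_N(\mathbb{C}):\|v\|\le e,\ \|v^{-1}\|\le e\}$.

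Passing to a convergent subsequence $u_{n_j}\to u$ with $u$ invertible, I would verify that $u$ is the desired complete order isomorphism. Unitality $u(e_X)=e_Y$ follows by passing to the limit in $\|u_{n_j}(e_X)-e_Y\|<1/n_j$. For complete isometry, fix $m$; for every $j$ with $n_j\ge m$ the monotonicity gives $\|id_{M_{m}(\mathbb{C})}\otimes u_{n_j}\|\le\|id_{M_{n_j}(\mathbb{C})}\otimes u_{n_j}\|<e^{1/n_j}$, and letting $j\to\infty$ (using continuity of $v\mapsto\|id_{M_{m}(\mathbb{C})}\otimes v\|$ together with $u_{n_j}^{-1}\to u^{-1}$) yields $\|id_{M_{m}(\mathbb{C})}\otimes u\|\le 1$ and $\|id_{M_{m}(\mathbb{C})}\otimes u^{-1}\|\le 1$. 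Since their product is at least $1$, both equal $1$, so $u$ and $u^{-1}$ are completely contractive and $u$ is a unital complete isometry; by the characterization recalled in the introduction it is therefore a complete order isomorphism. The only delicate point—and the main obstacle—is producing a \emph{single} limit $u$ that is completely isometric at every level $m$ simultaneously, rather than merely at each fixed level separately. This is exactly what the monotonicity of amplification norms provides in combination with the compactness in $GL_N(\mathbb{C})$: it lets one convergent subsequence control all levels $m$ at once, so the limit inherits complete isometry for every $m$.
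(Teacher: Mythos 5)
Your proof is correct and follows essentially the same route as the paper's: choose near-isometric isomorphisms $u_n$ witnessing $d_n(X,Y)=0$, use finite-dimensionality to extract a convergent subsequence with an invertible limit $u$, and conclude via the fact that a unital map is completely positive if and only if it is completely contractive. The paper's version is terser (it passes to a subsequence so that $u_n$ converges on a fixed basis and leaves the remaining verifications implicit), whereas you spell out the points it glosses over --- the monotonicity of amplification norms, the compactness in $GL_N(\mathbb{C})$ guaranteeing an invertible limit, and why the limit is completely isometric at every level --- but these are elaborations of the same argument, not a different one.
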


\begin{proof}
Necessity is obvious. Conversely suppose that $d_{n}\left( X,Y\right) =0$
for every $n\in \mathbb{N}$. Therefore for every $n\in \mathbb{N}$ there is
an isomorphism $u_{n}:X\rightarrow Y$ such that 
\begin{equation*}
||id_{M_{n}(\mathbb{C})}\otimes u_n||<1+2^{-n}\text{,}
\end{equation*}
\begin{equation*}
||id_{M_{n}(\mathbb{C})}\otimes u_n^{-1}||<1+2^{-n}\text{,}
\end{equation*}
and
\begin{equation*}
||u_n\left( e_{X}\right) -e_{Y}||<2^{-n}\text{.}
\end{equation*}
Fix a basis $\left( x_{1},\ldots ,x_{N}\right) $ for $X$. After passing to a
subsequence we can assume that for every $i\leq N$ the sequence $\left(
u_{n}\left( x_{i}\right) \right) _{n\in \mathbb{N}}$ converges in $Y$ to some $
y_{i}$. Define $u$ to be the linear function from $X$ to $Y$ sending $x_{i}$
to $y_{i}$ for every $i\leq N$. From the fact that a unital map is completely positive if and only if it is completely contractive, we deduce that $u$ is a complete order
isomorphism from $X$ to $Y$.
\end{proof}

By Lemma \ref{Lemma: separates} we can consider the space $OSy(N)$ of
complete order isomorphism classes of $N$-dimensional operator systems
endowed with the topology induced by the metric
\begin{equation*}
\delta _{w}\left( X,Y\right) =\sum_{n\in \mathbb{N}}2^{-n}d_{n}\left(
X,Y\right) \text{.}
\end{equation*}

\begin{lemma}
The space $OSy(N)$ is compact.
\end{lemma}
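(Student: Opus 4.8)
The plan is to realize $OSy(N)$ as the continuous image of a compact space of \emph{normalized} matrix-order structures on a fixed $N$-dimensional vector space, following the template for operator spaces in \cite[Chapter 21]{pisier_introduction_2003}. Fix the complex $\ast$-vector space $\mathbb{C}^{N}$ with a distinguished self-adjoint vector $e_{1}$, which will play the role of the Archimedean matrix order unit. By the Choi--Effros characterization \cite[Theorem 13.1]{paulsen_completely_2002}, an operator system structure on $\mathbb{C}^{N}$ with order unit $e_{1}$ is precisely a matrix order $(C_{n})_{n\in\mathbb{N}}$, with each $C_{n}$ a closed cone in the self-adjoint part of $M_{n}(\mathbb{C}^{N})\cong M_{n}(\mathbb{C})\otimes\mathbb{C}^{N}$, satisfying the compression axiom and with $I_{n}\otimes e_{1}$ Archimedean. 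I would encode each cone $C_{n}$ by the closed set $C_{n}\cap S_{n}$, where $S_{n}$ is the unit sphere of a fixed reference (Euclidean) norm on the self-adjoint part of $M_{n}(\mathbb{C}^{N})$, and topologize the collection of such structures by the Hausdorff metric on these closed subsets, level by level.

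The first substantive step is a \emph{uniform normalization}: every $N$-dimensional operator system $X$ is completely order isomorphic to one carried by $\mathbb{C}^{N}$ with order unit $e_{1}$ and with level-$1$ norm two-sidedly comparable to the Euclidean norm, with constants depending only on $N$. Indeed, the order norm makes the self-adjoint part $X^{sa}$ an $N$-dimensional real normed space in which $e_{X}$ has norm $1$; applying John's theorem together with a rotation and rescaling that sends (the image of) $e_{X}$ to $e_{1}$ produces a linear identification $X^{sa}\cong\mathbb{R}^{N}$ achieving the claimed comparability. Transporting the operator system structure along this identification, the standard operator-system inequalities (which bound $\|\cdot\|_{n}$ above and below in terms of the entrywise level-$1$ norm, with constants depending only on $N$ and $n$) show that at every level $n$ the cone $C_{n}$ is uniformly \emph{solid} and \emph{pointed}: it contains a ball of fixed radius around the direction $I_{n}\otimes e_{1}$ and is contained in a fixed proper cone. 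Consequently all these normalized structures live in a fixed compact hyperspace at each level, and a diagonal argument over $n$ extracts, from any sequence $(X_{k})$, a subsequence along which $C_{n}^{(k)}\cap S_{n}$ converges in the Hausdorff metric for every $n$. Taking closed conical hulls yields candidate limit cones $(C_{n}^{\infty})_{n}$.

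The final step is to check that the limit defines a genuine operator system and that the subsequence converges to it in $\delta_{w}$. The cone, compression, and Archimedean axioms are closed conditions, and the \emph{uniform} solidity and pointedness secured above guarantee they survive the Hausdorff limit; in particular $e_{1}$ remains an Archimedean matrix order unit, so Choi--Effros provides a limit operator system $X_{\infty}$. Since the level-$n$ norms converge together with the cones, the identity map $\mathbb{C}^{N}\to\mathbb{C}^{N}$ furnishes unital isomorphisms with $\|id_{M_{n}(\mathbb{C})}\otimes u\|$ and $\|id_{M_{n}(\mathbb{C})}\otimes u^{-1}\|$ tending to $1$ and $\|u(e_{1})-e_{1}\|\to0$, whence $d_{n}(X_{k},X_{\infty})\to0$ for each $n$ and therefore $\delta_{w}(X_{k},X_{\infty})\to0$; by Lemma \ref{Lemma: separates} the limit is well defined as a point of $OSy(N)$. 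This proves sequential compactness, and hence compactness, of the metric space $OSy(N)$. I expect the main obstacle to be precisely the persistence of non-degeneracy in the limit---ensuring that the distinguished Archimedean order unit and the compression axiom are not lost when passing to Hausdorff limits of cones---which is exactly what the uniform normalization in the second step is designed to control.
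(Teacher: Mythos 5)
Your proposal is correct, but it certifies the crucial point---that the limit object is again an operator system---by a genuinely different mechanism than the paper. The shared skeleton is the same in both arguments: normalize every $N$-dimensional system onto a fixed copy of $\mathbb{C}^{N}$ with constants depending only on $N$ (the paper uses an Auerbach basis via \cite[Theorem 4.13]{bollobas_linear_1999}, you use John's theorem on the self-adjoint part), extract a diagonal subsequence of level-by-level limits, and conclude $d_{n}(X_{k},X_{\infty})\rightarrow 0$ via the identity map of $\mathbb{C}^{N}$. The difference lies in which data is extracted and how the limit is recognized as an operator system. The paper takes Arzel\'a--Ascoli limits of the matrix \emph{norms}, obtaining an $L_{\infty}$-matrix-norm structure $X_{\infty}$, and then certifies it \emph{concretely}: choosing complete order embeddings $u_{k}\colon X_{k}\rightarrow A_{k}$ into unital C*-algebras, it exhibits a unital completely isometric embedding of $X_{\infty}$ into the C*-ultraproduct $\prod_{\mathcal{U}}A_{k}$, so no abstract axiom ever needs to be checked against the limit. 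You instead take Hausdorff limits of the matrix \emph{cones} and verify the Choi--Effros axioms \cite[Theorem 13.1]{paulsen_completely_2002} directly; as you rightly emphasize, this is exactly where the danger lies, since pointedness and the order-unit property are \emph{not} preserved under Hausdorff limits of cones in general, and your uniform solidity (if $x=x^{\ast }$ and $\left\Vert x\right\Vert _{n}\leq 1$ then $I_{n}\otimes e+x\in C_{n}$) and uniform pointedness ($\left\Vert a+b\right\Vert \geq \max \left\{ \left\Vert a\right\Vert ,\left\Vert b\right\Vert \right\} $ for positives) estimates are precisely what rules out degeneration, and both are valid. What each approach buys: the paper's ultraproduct step is soft and short but passes through an infinite-dimensional auxiliary object; yours stays finite-dimensional and purely abstract, at the cost of several bridging facts you leave implicit but which do follow from your uniform estimates and should be written out in a complete proof---that the limit sets are again closed convex cones, and, most importantly for the final convergence step, that Hausdorff convergence of the level-$2n$ cones yields uniform convergence on bounded sets of the level-$n$ norms, via the formula $\left\Vert x\right\Vert _{n}=\inf \left\{ r:\left[ \begin{smallmatrix} rI_{n} & x \\ x^{\ast } & rI_{n} \end{smallmatrix} \right] \in C_{2n}\right\} $.
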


\begin{proof}
Suppose that $\left( X_{k}\right) _{k\in \mathbb{N}}$ is a sequence of $N$
-dimensional operator systems. By \cite[Theorem 4.13]{bollobas_linear_1999}
for every $k\in \mathbb{N}$ one can find a normalized linear basis $
\overline{b}^{\left( k\right) }$ for $X_{k}$ such that the dual basis is
also normalized. Observe that this implies that
\begin{equation*}
\left\Vert \left( \lambda _{1},\ldots ,\lambda _{N}\right) \right\Vert
_{\ell ^{1}_N}=\sum_{i\leq N}\left\vert \lambda _{i}\right\vert \leq
N\,\|(\lambda_1,\ldots,\lambda_N)\|_{\ell^\infty_N}\leq
N\left\Vert \sum_{i\leq N}\lambda _{i}b_{i}^{\left( k\right) }\right\Vert
\end{equation*}
for $\lambda _{1},\ldots ,\lambda _{n}\in \mathbb{C}$. The basis $\overline{b
}^{\left( k\right) }$ induces a linear isomorphism of $X_{k}$ with $\mathbb{C
}^{N}$. Thus we can assume without loss of generality that the support of $
X_{k}$ is $\mathbb{C}^{N}$. Observe that $\left\Vert x\right\Vert _{k}\leq
\left\Vert x\right\Vert _{\ell ^{1}_N}$ for every $x\in X_{k}$. Denote by $
\Omega $ the unit ball of $\ell ^{1}_N$ and let  $\overline{e}^{\left(
k\right) }$  be the order unit of $X_{k}$ for every $k\in \mathbb{N}$. The
functions $\left\Vert \cdot \right\Vert _{k}$ are equiuniformly continuous
on $\Omega $. Therefore by the Arzel\'{a}-Ascoli theorem one can assume, after passing to a
subsequence, that the sequence 
\begin{equation*}
\left( \left\Vert \cdot \right\Vert _{k}\right) _{k\in \mathbb{N}}
\end{equation*}
converges uniformly on $\Omega $, and moreover $e_{i}^{\left( k\right) }$
converges for every $i\leq n$. Denote by $\left\Vert \cdot \right\Vert
_{\infty }$ and $e_{i}^{\left( \infty \right) }$ the corresponding limits.
If $Y$ is the corresponding Banach space, then the construction ensures that 
$X_{k}\rightarrow Y$ in the Banach Mazur distance for Banach spaces. A
similar argument can be applied to the Banach spaces $M_{n}\left(
X_{k}\right) $ for every $n\in \mathbb{N}$, every time passing to a further
subsequence. Finally take a diagonal subsequence and denote by $X_{\infty }$
the $L_{\infty }$-matrix-norm structure on $\mathbb{C}^{N}$ with
distinguished element $\overline{e}^{\left( \infty \right) }$ obtained as a
limit. For every $k\in \mathbb{N}$ fix a complete order embedding $u_{k}$ of 
$X_{k}$ into a unital C*-algebra $A_{k}$. Fix a nonprincipal ultrafilter $
\mathcal{U}$ over $\mathbb{N}$ and define $A$ to be the ultraproduct $\prod_{
\mathcal{U}}A_{k}$. Define the function $u:\mathbb{C}^{N}\rightarrow A$ by
\begin{equation*}
u(x)=\lim_{k\rightarrow \mathcal{U}}u_{k}(x)\text{.}
\end{equation*}
It is not difficult to verify that $u$ is a unital completely isometric
embedding of $X_{\infty }$ into $A$. This shows that $X_{\infty }$ is an
operator system, which is by construction the limit of the sequence $\left(
X_{k}\right) _{k\in \mathbb{N}}$.
\end{proof}

It only remains to show that the function from $\widehat{\Gamma }_{N}$ to $
OSy(N)$ assigning to $\bar{x}$ the complete order isomorphism class of $
\mathrm{span}(\bar{x})$ is Borel. Denote by $\mathcal{W}_{N}$ the set of
polynomials of degree $1$ in the noncommutative variables $X_{1},\ldots
,X_{N}$ and with coefficients from $\mathbb{Q}(i)$. Similarly denote by $
\mathcal{W}_{\mathbb{C},N}$ the set of polynomials of degree $1$ in the
noncommutative variables $X_{1},\ldots ,X_{N}$ and with coefficients from $
\mathbb{C}$.

\begin{lemma}
\label{Lemma: select e}There is a Borel function $\bar{x}\mapsto p_{e}^{\bar{
x}}$ from $\widehat{\Gamma }_{N}$ to $\mathcal{W}_{\mathbb{C},N}$ such that $
p_{e}^{\bar{x}}(\bar{x})=I$.
\end{lemma}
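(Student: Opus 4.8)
The plan is to exploit that, since $\mathrm{span}(\bar{x})$ is an operator system, the identity $I$ lies in $\mathrm{span}\{x_{1},\ldots,x_{N}\}$, and since $(x_{1},\ldots,x_{N})$ is linearly independent there is a \emph{unique} tuple $(c_{1},\ldots,c_{N})\in\mathbb{C}^{N}$ with $\sum_{i}c_{i}x_{i}=I$. Setting $p_{e}^{\bar{x}}=\sum_{i}c_{i}X_{i}\in\mathcal{W}_{\mathbb{C},N}$ then gives $p_{e}^{\bar{x}}(\bar{x})=I$, so the entire content of the lemma is to check that the assignment $\bar{x}\mapsto(c_{1},\ldots,c_{N})$ is Borel.

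First I would fix an orthonormal basis $(\xi_{k})_{k\in\mathbb{N}}$ of $H$ and, for each pair $(s,t)\in\mathbb{N}^{2}$, consider the functional $T\mapsto\langle T\xi_{s},\xi_{t}\rangle$ on $B(H)$. Each such functional is continuous for the weak operator topology on every ball $B_{n}(H)$, hence Borel for the inductive limit Borel structure on $B(H)$; consequently $\bar{x}\mapsto\langle x_{i}\xi_{s},\xi_{t}\rangle$ is Borel on $\widehat{\Gamma}_{N}$. The point of these functionals is that they separate the points of $B(H)$, so their restrictions to the $N$-dimensional space $\mathrm{span}(\bar{x})$ separate its points and therefore span its dual. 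Hence for every $\bar{x}\in\widehat{\Gamma}_{N}$ one can select $N$ pairs $(s_{1},t_{1}),\ldots,(s_{N},t_{N})$ for which the $N\times N$ matrix with entries $\langle x_{i}\xi_{s_{j}},\xi_{t_{j}}\rangle$ is invertible.

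To turn this into a Borel function I would enumerate the $N$-tuples of pairs as $\omega_{m}=((s_{1}^{m},t_{1}^{m}),\ldots,(s_{N}^{m},t_{N}^{m}))$, $m\in\mathbb{N}$, and for each $m$ form the matrix $A^{m}(\bar{x})$ and the vector $b^{m}$ with entries
\begin{equation*}
A^{m}(\bar{x})_{ji}=\langle x_{i}\xi_{s_{j}^{m}},\xi_{t_{j}^{m}}\rangle,\qquad b_{j}^{m}=\langle\xi_{s_{j}^{m}},\xi_{t_{j}^{m}}\rangle.
\end{equation*}
The sets $D_{m}=\{\bar{x}:\det A^{m}(\bar{x})\neq0\}$ are Borel, and by the previous paragraph $\bigcup_{m}D_{m}=\widehat{\Gamma}_{N}$. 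Putting $B_{m}=D_{m}\setminus\bigcup_{m'<m}D_{m'}$ gives a Borel partition of $\widehat{\Gamma}_{N}$, and on each piece $B_{m}$ Cramer's rule expresses the solution $c(\bar{x})=A^{m}(\bar{x})^{-1}b^{m}$ as a Borel (indeed rational) function of the entries. On $B_{m}$ the equations $A^{m}(\bar{x})c=b^{m}$ say exactly that the chosen functionals annihilate $\sum_{i}c_{i}x_{i}-I\in\mathrm{span}(\bar{x})$; since they form a basis of the dual of $\mathrm{span}(\bar{x})$, this forces $\sum_{i}c_{i}x_{i}=I$, so $c(\bar{x})$ really is the unique desired coefficient tuple. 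Defining $p_{e}^{\bar{x}}=\sum_{i}c_{i}(\bar{x})X_{i}$ piecewise then yields a Borel function on all of $\widehat{\Gamma}_{N}$.

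The routine parts are the Borel measurability of the matrix coefficients and of Cramer's rule. The one genuine point to get right is that no single fixed choice of $N$ functionals can work simultaneously for all $\bar{x}$, so the argument must pass through the countable Borel partition $(B_{m})$; the real content there is the verification that $\bigcup_{m}D_{m}$ exhausts $\widehat{\Gamma}_{N}$, which is where the linear independence of $\bar{x}$ and the separation property of the matrix-coefficient functionals are used.
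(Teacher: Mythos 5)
Your argument is correct, but it takes a genuinely different route from the paper's. The paper never isolates the coefficient vector: it stratifies $\widehat{\Gamma}_{N}$ into the Borel sets $\widehat{\Gamma}_{N,k}$ of those $\bar{x}$ for which $I$ can be approximated by $p(\bar{x})$ with $p\in\mathcal{W}_{N}$ and $\left\Vert p\right\Vert \leq k$, observes that the relation $\left\{ (\bar{x},p):p(\bar{x})=I,\ \left\Vert p\right\Vert \leq k\right\}$ is Borel with compact sections (compactness coming from the norm bound in the finite-dimensional space $\mathcal{W}_{\mathbb{C},N}$), and then invokes the Kuratowski--Ryll-Nardzewski selection theorem to extract a Borel choice of $p$. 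You instead exploit the fact that the witness is \emph{unique}: since $\mathrm{span}(\bar{x})$ is an operator system it contains $I$, and linear independence gives a unique $c\in\mathbb{C}^{N}$ with $\sum_{i}c_{i}x_{i}=I$; you then compute $c$ by explicitly Borel means, using the WOT matrix coefficients $\langle\,\cdot\,\xi_{s},\xi_{t}\rangle$ (Borel for the inductive-limit structure on $B(H)$), the elementary fact that a separating family of functionals on an $N$-dimensional space spans its dual, the countable exhaustion of $\widehat{\Gamma}_{N}$ by the invertibility sets $D_{m}$, and Cramer's rule on the pieces $B_{m}$. Both proofs pass through a countable Borel stratification, but yours replaces the appeal to a uniformization theorem by an explicit formula; this makes the argument more self-contained and elementary, and it shows in addition that the selected polynomial is canonical (the unique linear one), at the cost of being tied to the concrete realization of $\widehat{\Gamma}_{N}$ inside $B(H)^{N}$. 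The paper's compact-sections argument is shorter and would survive in settings where the witness is not unique or where no pointwise-separating family of Borel functionals is available.
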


\begin{proof}
Fix $k\in \mathbb{N}$. Denote by $\widehat{\Gamma }_{N,k}$ the Borel set of $
\bar{x}\in \widehat{\Gamma }_{N}$ such that for every $\varepsilon >0$ there
is $p\in \mathcal{W}_{N}$ such that $\left\Vert p\right\Vert \leq k$ and $
\left\Vert p\left( x_{1},\ldots ,x_{N}\right) -I\right\Vert <\varepsilon $.
Observe that the relation 
\begin{equation*}
\left\{ \left( \bar{x},p\right) \in \widehat{\Gamma }_{N,k}\times \mathcal{W}
_{\mathbb{C},N}:p(\bar{x})=I\text{, }\left\Vert p\right\Vert \leq k\right\}
\end{equation*}
is Borel and has compact sections. Then the conclusion follows from the
Kuratowski--Ryll-Nardzewski theorem \cite[Theorem 12.13]
{kechris_classical_1995}.
\end{proof}

We can finally present the proof of Theorem \ref{Theorem: iso fg osy}.

\begin{proof}[Proof of Theorem \protect\ref{Theorem: iso fg osy}]
As observed earlier, it is enough to show that for every $N\in \mathbb{N}$
the relation of complete order isomorphism of $N$-dimensional operator
systems is smooth. For convenience we will work in the parametrization $%
\widehat{\Gamma }_{N}$. Consider the compact metrizable space $OSy(N)$
defined above. The map from $\widehat{\Gamma }_{N}$ to $OSy(N)$ that assigns
to an element $\left( x_{1},\ldots ,x_{N}\right) $ of $\widehat{\Gamma }_{N}$
the class of $\mathrm{span}\left( \overline{x}\right) $ is a reduction from
the relation of complete order isomorphism to the relation of equality in $%
OSy(N)$. In order to conclude that the former relation is smooth it is
enough to show that such a reduction is Borel. To this purpose, fix $\bar{x}%
\in \widehat{\Gamma }_{N}$, $n\in \mathbb{N}$, and $\varepsilon >0$. It is
enough to show that the set of $\bar{y}\in \widehat{\Gamma }_{N}$ such that%
\begin{equation*}
d_{n}\left( \mathrm{span}(\bar{x}),\mathrm{span}(\bar{y})\right)
<\varepsilon 
\end{equation*}%
is Borel. Observe $\bar{y}$ belongs to such a set if and only if there are $%
p_{i}\in \mathcal{W}_{N}$ such that

\begin{enumerate}
\item $\left\Vert p_{e}^{\bar{x}}(\bar{y})-I\right\Vert <\varepsilon $;

\item $\left\Vert \left[ p_{ij}(\bar{y})\right] \right\Vert \leq \left(
1+\varepsilon \right) \left\Vert \left[ p_{ij}(x)\right] \right\Vert $ for
every $p_{ij}\in \mathcal{W}_{N}$;

\item $\left\Vert \left[ p_{ij}(x)\right] \right\Vert \leq \left(
1+\varepsilon \right) \left\Vert \left[ p_{ij}(\bar{y})\right] \right\Vert $
for every $p_{ij}\in \mathcal{W}_{N}$.
\end{enumerate}

By virtue of Lemma \ref{Lemma: select e} these are Borel conditions. 
\end{proof}

\subsection{Operator systems generated by a single unitary\label{Subsection:
single unitary}}

In this subsection we provide a concrete classification of operator systems
generated by a single unitary operator $U$ in terms of the spectrum $\sigma
(U)$ of $U$. First of all we observe that the set $\Gamma _{U}$ of $\gamma
\in \Gamma $ such that $\mathcal{OS}y(\gamma )$ is generated by a single
unitary is Borel. In fact $\gamma \in \Gamma _{U}$ if and only if $\gamma
\in \Gamma _{\leq 3}$ and moreover there is $n\in \mathbb{N}$ such that for
every $m\in \mathbb{N}$ and $q_{1},\ldots ,q_{m}\in \mathcal{V}$ there is $
p\in \mathcal{V}_{n}$ such that $\left\Vert p\right\Vert \leq n$, $
\left\Vert \left( p^{\ast }p\right) (\gamma )-1\right\Vert <\frac{1}{m}$ and
for every $j\leq m$ there is $r\in \mathcal{V}$ such that
\begin{equation*}
\left\Vert r\left( p(\gamma )\right) -q_{j}(\gamma )\right\Vert <1/m\text{.}
\end{equation*}
This argument together with \cite[Theorem 28.8]{kechris_classical_1995}
shows that there is a Borel map $\gamma \mapsto p^{\gamma }$ from $\Gamma
_{U}$ to $\mathcal{V}_{\mathbb{C}}$ such that $p^{\gamma }(\gamma )$ is a
unitary generator of $\Gamma _{U}$. This also shows that the standard Borel
space $\mathcal{U}(H)$ of unitary operators on $H$ is a weakly equivalent
parametrization for the category of operator systems generated by a single
unitary. (Observe that $\mathcal{U}(H)$ is a $G_{\delta }$ subset of the unit ball of $
B(H)$ with respect to the weak operator topology, and hence a standard Borel
space.)

In the following we will consider the parametrization $\mathcal{U}(H)$. The operator
system $\mathcal{OS}y(U)$ coded by $U$ is the closed linear span of the set $
\left\{ 1,U,U^{\ast }\right\} $. Recall that the C*-envelope of an operator
system is in some sense the minimal C*-algebra containing a given operator
system. It was introduced in terms of boundary representations in \cite
{arveson_subalgebras_1969}. The \emph{\v Silov ideal} of an operator system is the intersection of the kernels of all of its boundary representations; when this ideal is trivial, we say that the system is  \emph{reduced}, and the C$^*$-algebra it generates is its C$^*$-envelope. 

An abstract proof of the existence of the C$^*$-envelope, making no
reference to boundary representations, was given by Hamana using his theory
of injective envelopes \cite{hamana_injective_1979,hamana_injective_1979-1}.
The existence of sufficiently many boundary representations to determine the
C*-envelope was finally established in the separable case in \cite
{arveson_noncommutative_2008} and in full generality in \cite
{davidson_choquet_2013}. Combining this with Arveson's initial work we have the following:

\begin{theorem}\label{theorem: coi extends to unitary iso}
Let $\mathcal S_1\subset C^*(\mathcal S_1)$ and $\mathcal S_2\subset C^*(\mathcal S_2)$ be two reduced operator systems, and let $\phi:\mathcal S_1\to\mathcal S_2$ be a complete order isomorphism. Then there exists a $*$-isomorphism $\tilde\phi:C^*(\mathcal S_1)\to C^*(\mathcal S_2)$, with $\tilde\phi|_{\mathcal S_1}=\phi$. 
\end{theorem}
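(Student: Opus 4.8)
The plan is to combine the universal property of the C*-envelope, as realized through boundary representations, with the key fact that a reduced operator system sits inside its generated C*-algebra with a trivial \v Silov ideal. The central structural tool will be Arveson's theory of boundary representations: a boundary representation of an operator system $\mathcal S\subset C^*(\mathcal S)$ is an irreducible $*$-representation $\pi$ of $C^*(\mathcal S)$ such that $\pi$ has the unique extension property, meaning $\pi|_{\mathcal S}$ admits a unique completely positive extension to $C^*(\mathcal S)$ (namely $\pi$ itself). The foundational results cited in the excerpt (from \cite{arveson_noncommutative_2008, davidson_choquet_2013}) guarantee that there are sufficiently many boundary representations to completely norm $\mathcal S$, and that their direct sum yields a faithful representation of the C*-envelope precisely when the system is reduced.

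First I would observe that, since $\phi\colon\mathcal S_1\to\mathcal S_2$ is a complete order isomorphism, it is in particular a unital complete isometry, and likewise for $\phi^{-1}$. The strategy is to show that $\phi$ transports boundary representations of $\mathcal S_2$ to boundary representations of $\mathcal S_1$. Concretely, I would take a boundary representation $\pi$ of $C^*(\mathcal S_2)$, restrict it to $\mathcal S_2$, and precompose with $\phi$ to obtain a unital completely positive map $\pi|_{\mathcal S_2}\circ\phi\colon\mathcal S_1\to B(H_\pi)$. Because $\phi$ is a complete order isomorphism, this composite is again a unital complete isometry, and one checks it has the unique extension property relative to $\mathcal S_1$: any completely positive extension to $C^*(\mathcal S_1)$ would, after conjugating by $\phi^{-1}$, produce a completely positive extension of $\pi|_{\mathcal S_2}$, which is unique by hypothesis on $\pi$. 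Irreducibility is likewise preserved. Thus $\phi$ sets up a correspondence between the boundary representations of the two systems.

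Having established this correspondence, I would assemble a $*$-isomorphism as follows. Since both systems are reduced, the direct sum $\rho_i$ of (a separating family of) boundary representations of $C^*(\mathcal S_i)$ is a faithful $*$-representation identifying $C^*(\mathcal S_i)$ with the C*-algebra generated by $\rho_i(\mathcal S_i)$. Using the boundary-representation correspondence induced by $\phi$, I would match the summands so that the resulting representations of $\mathcal S_1$ and $\mathcal S_2$ agree up to $\phi$; that is, $\rho_2\circ\phi$ and $\rho_1$ are unitarily equivalent on the nose as maps out of $\mathcal S_1$. The unitary $V$ implementing this equivalence then conjugates $C^*(\mathcal S_1)$ onto $C^*(\mathcal S_2)$, and I would define $\tilde\phi(a)=VaV^*$. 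By construction $\tilde\phi$ is a $*$-isomorphism, and on generators $x\in\mathcal S_1$ it satisfies $\tilde\phi(x)=\phi(x)$, so that $\tilde\phi|_{\mathcal S_1}=\phi$; since $\mathcal S_i$ generates $C^*(\mathcal S_i)$ as a C*-algebra, this determines $\tilde\phi$ uniquely.

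The main obstacle I anticipate is the bookkeeping in the third step: one must ensure that the families of boundary representations chosen for the two systems are genuinely matched by $\phi$ rather than merely abstractly in bijection, so that $\rho_2\circ\phi$ is unitarily equivalent to $\rho_1$ with multiplicities respected. This is where the unique extension property does the real work, because it is exactly what guarantees that a boundary representation of $\mathcal S_1$ has an essentially unique realization, pinning down the equivalence. An alternative route that sidesteps some of this multiplicity bookkeeping is to invoke the universal property of the C*-envelope directly: the C*-envelope $C^*_e(\mathcal S_i)$ is characterized (by Hamana's theory, or equivalently via boundary representations) as the unique C*-algebra generated by $\mathcal S_i$ through which every complete order embedding factors minimally, and a reduced system satisfies $C^*(\mathcal S_i)=C^*_e(\mathcal S_i)$. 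Then $\phi$ followed by the inclusion $\mathcal S_2\hookrightarrow C^*(\mathcal S_2)$ is a unital complete isometry of $\mathcal S_1$ into a C*-algebra generated by its image, and the universal/minimality property of $C^*_e(\mathcal S_1)=C^*(\mathcal S_1)$ yields a $*$-homomorphism $\tilde\phi$ extending $\phi$; applying the same argument to $\phi^{-1}$ produces a two-sided inverse, so $\tilde\phi$ is a $*$-isomorphism. I would likely present the universal-property argument as the clean proof and relegate the boundary-representation picture to motivation.
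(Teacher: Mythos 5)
Your proposal is correct in substance, and your primary route is essentially the paper's own: the paper's proof consists of citing Arveson's Theorem 2.2.5 \cite{arveson_subalgebras_1969} together with Theorem 3.4 of \cite{davidson_choquet_2013}, and the content of those two results is exactly what you reconstruct --- the transport of boundary representations along a complete order isomorphism (Arveson's Theorem 2.1.2, which underlies his Theorem 2.2.5), plus the Davidson--Kennedy guarantee that there are sufficiently many boundary representations, which is what makes the paper's notion of ``reduced'' (trivial \v Silov ideal, so the direct sum of boundary representations is faithful and $C^*(\mathcal S_i)=C^*_e(\mathcal S_i)$) usable. One step of your reconstruction is glossed, however: ``conjugating by $\phi^{-1}$'' does not typecheck as stated. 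A completely positive extension $\psi$ of $\pi|_{\mathcal S_2}\circ\phi$ lives on $C^*(\mathcal S_1)$, while $\phi^{-1}$ is defined only on $\mathcal S_2$; to manufacture from $\psi$ an extension of $\pi|_{\mathcal S_2}$ you must first extend $\phi^{-1}$ to a unital completely positive map $\Psi\colon C^*(\mathcal S_2)\to B(H_1)$ by Arveson's extension theorem, and also extend $\psi$ to $B(H_1)$, since the range of $\Psi$ need not lie in $C^*(\mathcal S_1)$. The unique extension property of $\pi$ then gives $\psi\circ\Psi=\pi$, but this determines $\psi$ only on the range of $\Psi$; to pin down $\psi$ itself one runs the Schwarz-inequality/multiplicative-domain argument: equality throughout $\psi(\Psi(y))^*\psi(\Psi(y))\le\psi\left(\Psi(y)^*\Psi(y)\right)\le\psi\left(\Psi(y^*y)\right)$ forces $\phi^{-1}(\mathcal S_2)=\mathcal S_1$, and hence $C^*(\mathcal S_1)$, into the multiplicative domain of $\psi$, so $\psi$ is multiplicative and uniquely determined by its values on $\mathcal S_1$ (irreducibility then comes for free, since the image of $\psi$ equals $\pi(C^*(\mathcal S_2))$). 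This is fixable by entirely standard means --- it is literally Arveson's proof --- but as written the step is a gap. By contrast, the ``multiplicity bookkeeping'' you worry about in the assembly step is a non-issue: take $\rho_2$ to be the direct sum of all boundary representations of $C^*(\mathcal S_2)$ and $\rho_1$ the direct sum of their transports; then $\rho_1=\rho_2\circ\phi$ holds on $\mathcal S_1$ exactly, on the same Hilbert space, with no unitary needed, both representations are faithful by reducedness, and $\rho_2^{-1}\circ\rho_1$ is the desired $*$-isomorphism.

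Your alternative argument via the universal property of the C*-envelope is also correct and is the cleaner write-up, with one caveat about direction: applying the universal property of $C^*_e(\mathcal S_1)=C^*(\mathcal S_1)$ to the embedding $\phi\colon\mathcal S_1\to C^*(\mathcal S_2)$ produces a surjective $*$-homomorphism $q\colon C^*(\mathcal S_2)\to C^*(\mathcal S_1)$ extending $\phi^{-1}$, not $\phi$; the map extending $\phi$ arises from applying the universal property of $C^*_e(\mathcal S_2)$ to $\phi^{-1}$, and the two composites are $*$-homomorphisms fixing the generating systems pointwise, hence mutually inverse. Note also that this route does not avoid Davidson--Kennedy: with the paper's definition of reduced (trivial intersection of the kernels of the boundary representations), the identification $C^*(\mathcal S_i)=C^*_e(\mathcal S_i)$ is precisely what \cite[Theorem 3.4]{davidson_choquet_2013} (or \cite{arveson_noncommutative_2008} in the separable case) supplies.
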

\begin{proof}
This follows from \cite[Theorem 2.2.5]{arveson_subalgebras_1969} and \cite[Theorem 3.4]{davidson_choquet_2013}.
\end{proof}

For the reader's convenience, we include the following well-known fact. 
\begin{lemma}
\label{Lemma: envelope}Suppose that $V_{1},\ldots ,V_{n}\in B(H)$. Define $X$
to be the operator system generated by
\begin{equation*}
\left\{ V_{1},\ldots ,V_{n},V_{1}^{\ast }V_{1},\ldots ,V_{n}^{\ast
}V_{n},V_{1}V_{1}^{\ast },\ldots ,V_{n}V_{n}^{\ast }\right\} \text{.}
\end{equation*}
Then the C*-envelope of $X$ can be identified with the C*-algebra $C^{\ast
}(X)$ generated by $X$ inside $B(H)$. 
\end{lemma}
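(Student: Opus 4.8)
The plan is to realize the C*-envelope as a quotient and then show the quotient map is injective, the crucial input being that $X$ contains enough products to force each $V_i$ into the multiplicative domain of a suitable completely positive map. To begin, note that $C^{\ast}(X)=C^{\ast}(V_{1},\ldots,V_{n})$, since every listed generator of $X$ lies in $C^{\ast}(V_{1},\ldots,V_{n})$ while conversely each $V_{i}\in X$. By the minimality (co-universal) property of the C*-envelope there is a surjective $\ast$-homomorphism $q\colon C^{\ast}(X)\to C^{\ast}_{e}(X)$ that restricts to a complete order isomorphism of $X$ onto $q(X)$; the Shilov ideal is exactly $\ker q$, so it suffices to prove that $q$ is injective.

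Next I would build a left inverse for $q$ at the level of completely positive maps. Since $q|_{X}\colon X\to q(X)$ is a unital complete order isomorphism, its inverse $(q|_{X})^{-1}\colon q(X)\to X\subseteq B(H)$ is unital completely positive. By Arveson's extension theorem \cite[Theorem 7.5]{paulsen_completely_2002} it extends to a unital completely positive map $\psi\colon C^{\ast}_{e}(X)\to B(H)$ with $\psi|_{q(X)}=(q|_{X})^{-1}$. Setting $\Phi:=\psi\circ q\colon C^{\ast}(X)\to B(H)$, this composition is unital completely positive and satisfies $\Phi|_{X}=\mathrm{id}_{X}$.

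The heart of the argument is the multiplicative domain of $\Phi$. Because $V_{i}$, $V_{i}^{\ast}V_{i}$ and $V_{i}V_{i}^{\ast}$ all belong to $X$ and $\Phi$ is the identity on $X$, we have $\Phi(V_{i}^{\ast}V_{i})=V_{i}^{\ast}V_{i}=\Phi(V_{i})^{\ast}\Phi(V_{i})$ and likewise $\Phi(V_{i}V_{i}^{\ast})=V_{i}V_{i}^{\ast}=\Phi(V_{i})\Phi(V_{i})^{\ast}$, so each $V_{i}$ lies in the multiplicative domain of $\Phi$. The multiplicative domain is a C*-subalgebra on which $\Phi$ is a $\ast$-homomorphism \cite[Theorem 3.18]{paulsen_completely_2002}; as it contains $1$ and every $V_{i}$, it is all of $C^{\ast}(X)$. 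Thus $\Phi$ is a $\ast$-homomorphism agreeing with the identity on the generating set $X$, whence $\Phi=\mathrm{id}_{C^{\ast}(X)}$. Then $\psi\circ q=\mathrm{id}$ shows $q$ is injective, so $q$ is a $\ast$-isomorphism and $C^{\ast}_{e}(X)=C^{\ast}(X)$.

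I expect the only real subtlety to be the multiplicative-domain step, and in particular recognizing that it is essential to have \emph{both} $V_{i}^{\ast}V_{i}$ and $V_{i}V_{i}^{\ast}$ in $X$: each of the two identities above is needed to certify membership in the multiplicative domain, and a system containing only one of the two products (for instance the system $\mathrm{span}\{1,V,V^{\ast}\}$) generally has a strictly smaller C*-envelope. The remaining steps---identifying $C^{\ast}(X)$ with $C^{\ast}(V_{1},\ldots,V_{n})$, invoking minimality of the C*-envelope, and applying the Arveson extension theorem---are routine.
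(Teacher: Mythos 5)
Your proof is correct, and while its computational heart is the same as the paper's---using that $V_i^{\ast}V_i$ and $V_iV_i^{\ast}$ lie in $X$ to place each $V_i$ in the multiplicative domain of a unital completely positive map that fixes $X$, whence that map is multiplicative on all of $C^{\ast}(X)$---the surrounding architecture is genuinely different. The paper takes an \emph{arbitrary} representation $\pi$ of $C^{\ast}(X)$ and an arbitrary completely positive extension $\phi$ of $\pi|_X$, and runs the multiplicative-domain argument on $\phi$ to conclude $\phi=\pi$; this shows every representation has the unique extension property (i.e.\ $X$ is hyperrigid), so in particular every irreducible representation is a boundary representation and the \v{S}ilov ideal is trivial. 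You instead run the argument once, on a single specially constructed map: the Arveson extension $\psi$ of $(q|_X)^{-1}$ composed with the envelope quotient $q$, concluding $\psi\circ q=\mathrm{id}$ and hence that $q$ is injective. Your route needs only the existence of the C*-envelope with its co-universal property (Hamana) together with Arveson's extension theorem, and avoids boundary representations entirely; the paper's route yields a formally stronger conclusion (hyperrigidity) and meshes with the boundary-representation machinery it has already set up to prove that complete order isomorphisms of reduced operator systems extend to $\ast$-isomorphisms of the generated C*-algebras. One small quibble with your closing remark: $\mathrm{span}\{1,V,V^{\ast}\}$ contains \emph{neither} product for general $V$, so as stated it is not an instance of ``only one of the two''; but for $V$ a nonunitary isometry (say the unilateral shift, where the C*-envelope is $C(\mathbb{T})$ rather than the Toeplitz algebra $C^{\ast}(V)$) it contains $V^{\ast}V=1$ but not $VV^{\ast}$, which does make exactly your point.
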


\begin{proof}
We will show that every (irreducible) representation of $C^*(X)$ is a boundary representation, so the \v Silov ideal of $X$ is trivial, implying the result. We actually show that $X$ is hyperrigid, but we don't really need this fact. Let $\pi : C^*(X) \to B(H)$ be
a representation. We must show that if $\phi : C^*(X) \to B(H)$ is a
completely positive extension of the restriction $\pi|_X$, then $\phi = \pi$.
We have for every $i = 1,\ldots,n$, 
\begin{equation*}
\phi(V_i^*) \phi(V_i) = \pi(V_i^*)\pi(V_i^*) = \pi(V_i^*V_i)= \phi(V_i^* V_i),
\end{equation*}
and similarly $\phi(V_i)\phi(V_i^*) = \phi(V_i V_i^*)$. Thus $V_1,\ldots,V_n$
each belong to the multiplicative domain of $\phi$ (see e.g. \cite[
Proposition 1.3.11]{blecher_operator_2004}), and so $\phi$ is multiplicative. Since $\phi(V_i) = \pi(V_i)$
for each $i = 1,\ldots,n$, it follows that $\phi = \pi$.
\end{proof}

We can now address the announced classification of operator
systems generated by a single unitary.

\begin{theorem}
\label{Theorem: classification single unitary}Suppose that $U,V\in \mathcal{U}(H)$
and $\sigma (V)$ has at least $5$ points. Then the  following statements are
equivalent:

\begin{enumerate}
\item there exists a $\ast $-isomorphism $\pi :C^{\ast }(U)\rightarrow
C^{\ast }(V)$ with $\pi (U)=\lambda V$ or $\pi (U)=\lambda V^{\ast }$ for
some $\lambda \in \mathbb{T}$;

\item $\sigma (U)=\lambda \sigma (V)$ or $\sigma (U)=\lambda \overline{
\sigma (V)}$ for some $\lambda \in \mathbb{T}$;

\item $\mathcal{OS}y(U)$ is completely order isomorphic to $\mathcal{OS}y(V)$
.
\end{enumerate}
If $\sigma(V)$ has 3 points or less, then $\mathcal{OS}y(U)$ is completely order isomorphic to $\mathcal{OS}y(V)$ if and only if $|\sigma(U)|=|\sigma(V)|$.
\end{theorem}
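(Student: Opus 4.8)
The plan is to treat $C^*(U)$ and $C^*(V)$ concretely via Gelfand theory: since $U$ is unitary it is normal, so $C^*(U)$ is commutative and the continuous functional calculus identifies it with $C(\sigma(U))$, with $U$ corresponding to the coordinate function $z\mapsto z$ on $\sigma(U)\subset\mathbb{T}$; likewise $C^*(V)\cong C(\sigma(V))$. Moreover, taking $n=1$ and $V_1=U$ in Lemma~\ref{Lemma: envelope} (note $U^*U=UU^*=1$), the system $\mathcal{OS}y(U)=\mathrm{span}\{1,U,U^*\}$ is reduced with C*-envelope $C^*(U)$, and similarly for $V$. With these identifications I would establish the cycle $(1)\Rightarrow(3)\Rightarrow(1)$ together with the bi-implication $(1)\Leftrightarrow(2)$.

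The implication $(1)\Rightarrow(3)$ is immediate: a $*$-isomorphism $\pi$ with $\pi(U)=\lambda V$ (resp.\ $\lambda V^*$) satisfies $\pi(1)=1$ and $\pi(U^*)=\bar\lambda V^*$ (resp.\ $\bar\lambda V$), so $\pi$ carries $\mathrm{span}\{1,U,U^*\}$ onto $\mathrm{span}\{1,V,V^*\}$ and its restriction is the desired complete order isomorphism. For $(1)\Leftrightarrow(2)$ I would use Gelfand duality: a $*$-isomorphism $C(\sigma(U))\to C(\sigma(V))$ is exactly composition with a homeomorphism $h\colon\sigma(V)\to\sigma(U)$, and the requirement $\pi(U)=\lambda V$ translates to $h(w)=\lambda w$, which is a homeomorphism onto $\sigma(U)$ precisely when $\sigma(U)=\lambda\sigma(V)$; the requirement $\pi(U)=\lambda V^*$ translates to $h(w)=\lambda\bar w$ and to $\sigma(U)=\lambda\overline{\sigma(V)}$. (For $(1)\Rightarrow(2)$ one may alternatively just observe that $*$-isomorphisms preserve spectra, so $\sigma(U)=\sigma(\pi(U))=\sigma(\lambda V)=\lambda\sigma(V)$, and similarly in the adjoint case.)

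The heart of the matter is $(3)\Rightarrow(1)$. Given a complete order isomorphism $\phi\colon\mathcal{OS}y(U)\to\mathcal{OS}y(V)$, since both systems are reduced, Theorem~\ref{theorem: coi extends to unitary iso} extends $\phi$ to a $*$-isomorphism $\tilde\phi\colon C^*(U)\to C^*(V)$. Then $\tilde\phi(U)=\phi(U)$ is a unitary lying in $\mathcal{OS}y(V)$, so $\phi(U)=a+bV+cV^*$ for scalars $a,b,c$, and unitarity gives $|a+bw+c\bar w|=1$ for every $w\in\sigma(V)$. Writing $w=e^{i\theta}$, the function $\theta\mapsto|a+bw+c\bar w|^2-1$ is a trigonometric polynomial of degree at most $2$, hence has at most $4$ zeros in a period unless it vanishes identically; as $\sigma(V)$ has at least $5$ points, it vanishes identically, so $|a+bw+c\bar w|=1$ on all of $\mathbb{T}$. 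Matching Fourier coefficients of $|a+bw+c\bar w|^2\equiv1$ yields $b\bar c=0$, $\bar ab+a\bar c=0$, and $|a|^2+|b|^2+|c|^2=1$; a short case analysis then forces $a=0$ together with exactly one of $b,c$ of modulus one (the degenerate possibility that $\phi(U)$ is a scalar is excluded because $\phi(U)$ must generate $C^*(V)\cong C(\sigma(V))$, which is at least $5$-dimensional). Thus $\phi(U)=\lambda V$ or $\phi(U)=\lambda V^*$ with $\lambda\in\mathbb{T}$, and $\tilde\phi$ witnesses $(1)$. I expect this trigonometric-polynomial step---the degree-$2$ zero count and the Fourier matching---to be the main obstacle, and it is precisely where the hypothesis of $5$ points enters.

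For the case $|\sigma(V)|\le 3$, necessity of $|\sigma(U)|=|\sigma(V)|$ is general: a complete order isomorphism extends by Theorem~\ref{theorem: coi extends to unitary iso} to a $*$-isomorphism $C(\sigma(U))\cong C(\sigma(V))$, forcing the (finite) spectra to be homeomorphic and hence of equal cardinality. For sufficiency I would compute $\dim\mathcal{OS}y(U)$: the elements $1,U$ are independent once $|\sigma(U)|\ge2$, and $U^*\in\mathrm{span}\{1,U\}$ would require $\bar w=p+qw$ on $\sigma(U)$, i.e.\ $qw^2+pw-1=0$ for all $w\in\sigma(U)$, a quadratic with at most two roots; hence $1,U,U^*$ are independent once $|\sigma(U)|\ge3$. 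Consequently, when $|\sigma(U)|=k\le3$ one has $\dim\mathcal{OS}y(U)=k=\dim C^*(U)$, so $\mathcal{OS}y(U)=C^*(U)\cong\mathbb{C}^k$ is the full $k$-dimensional commutative C*-algebra; since that operator system structure is unique, $|\sigma(U)|=|\sigma(V)|=k\le3$ gives a complete (indeed $*$-)isomorphism $\mathcal{OS}y(U)\cong\mathcal{OS}y(V)$.
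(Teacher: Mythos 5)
Your proposal is correct and takes essentially the same route as the paper: both identify $C_e^*(\mathcal{OS}y(U))=C^*(U)$ via Lemma~\ref{Lemma: envelope}, extend the complete order isomorphism to a $*$-isomorphism via Theorem~\ref{theorem: coi extends to unitary iso}, and then extract the relations $|a|^2+|b|^2+|c|^2=1$, $\bar ab+a\bar c=0$, $b\bar c=0$ from unitarity of $\phi(U)=a+bV+cV^*$ --- your degree-$2$ trigonometric-polynomial zero count on the $\geq 5$ points of $\sigma(V)$ is exactly the paper's linear independence of $\left\{ I,V,V^{\ast },V^{2},(V^{\ast })^{2}\right\}$ in $C(\sigma(V))$, stated differently. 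The easy implications and the $|\sigma(V)|\leq 3$ case also match the paper, up to a reorganization of the cycle (you prove $(1)\Rightarrow(3)$ and $(1)\Leftrightarrow(2)$ rather than $(1)\Rightarrow(2)\Rightarrow(3)$) and your explicit dimension count showing $\mathcal{OS}y(U)=C^*(U)$ when $|\sigma(U)|\leq 3$, which the paper leaves implicit.
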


\begin{proof}
The implication (1)$\implies $(2) is clear by the spectral mapping theorem. 
For (2)$\implies $(3),
let $W=\lambda V$. Then the hypothesis is that $\sigma (U)=\sigma (W)$. Thus 
$C^{\ast }(U)\simeq C(\sigma (U))=C(\sigma (W))\simeq C^{\ast }(W)$ as
C*-algebras, via a *-isomorphism $\pi $ such that $\pi (U)=W$ (because the
two isomorphisms above map $U\mapsto z\mapsto W$). If we let $\varphi =\pi
|_{\mathcal{OS}y(U)}$, we get a unital completely positive map $\varphi :
\mathcal{OS}y(U)\rightarrow \mathcal{OS}y(W)$ with 
\begin{equation*}
\varphi (\alpha I+\beta U+\gamma U^{\ast })=\alpha I+\beta W+\gamma W^{\ast
}.
\end{equation*}
This is well-defined and bijective, because of the fact that $|\sigma
(U)|=|\sigma (W)|=|\sigma (V)|>2$ implies that $I,U,U^{\ast }$ are linearly
independent. We have $\varphi ^{-1}=\pi ^{-1}|_{\mathcal{OS}y(V)}$, so it is
also unital completely positive. Thus, $\mathcal{OS}y(U)$ is completely
order isomorphic to $\mathcal{OS}y(W)=\mathcal{OS}y(V)$. Finally, for the
case where $\sigma (U)=\lambda \overline{\sigma (V)}$, we take $W=\lambda
V^{\ast }$ and repeat the argument above.

Now we show that (3)$\implies$(1). Suppose that $\mathcal{OS}y(U)$ is
completely order isomorphic to $\mathcal{OS}y(V)$. By Lemma \ref{Lemma:
envelope} the C*-envelope $C_{e}^{\ast }\left( \mathcal{OS}y(U)\right) $
coincides with the C*-algebra $C^{\ast }(U)$ generated by $U$ inside $B(H)$.
The same applies to $V$. By Theorem \ref{theorem: coi extends to unitary iso}, there is a *-isomorphism $
\varphi $ from $C^{\ast }(U)$ to $C^{\ast }(V)$ that sends $\mathcal{OS}y(U)$
onto $\mathcal{OS}y(V)$. In particular
\begin{equation*}
\varphi (U)=\alpha I+\beta V+\gamma V^{\ast }
\end{equation*}
for some $\alpha ,\beta ,\gamma \in \mathbb{C}$. Since $U$ is unitary and $
\varphi $ is a $*$-homomorphism we have that
\begin{align*}
I=\varphi \left( U^{\ast }U\right) & =\left( \left\vert \alpha \right\vert
^{2}+\left\vert \beta \right\vert ^{2}+\left\vert \gamma \right\vert
^{2}\right) I+\left( \overline{\alpha }\beta +\alpha \overline{\gamma }
\right) V+\left( \alpha \overline{\beta }+\overline{\alpha }\gamma \right)
V^{\ast } \\
& \ \ +\overline{\beta }\gamma V^{2}+\beta \overline{\gamma }(V^{\ast })^{2}.
\end{align*}
The assumption that $\sigma (V)$ has at least $5$ points
implies that the elements
$
\left\{ I,V,V^{\ast },V^{2},\left( V^{\ast }\right) ^{2}\right\}
$
of $B(H)$ are linearly independent (because $C^*(V)$ is at least 5-dimensional). Therefore we get the relations
\begin{eqnarray*}
1 &=&\left\vert \alpha \right\vert ^{2}+\left\vert \beta \right\vert
^{2}+\left\vert \gamma \right\vert ^{2}\text{,} \\
0 &=&\overline{\alpha }\beta +\alpha \overline{\gamma }\text{,} \\
0 &=&\alpha \overline{\beta }+\overline{\alpha }\gamma \text{,} \\
0 &=&\overline{\beta }\gamma \text{.}
\end{eqnarray*}
Exactly one between $\beta $ and $\gamma $ is zero (if both
were zero,   we would get $\varphi (U)=\alpha I$, and the image of $
\varphi $ would be one-dimensional). This forces $\alpha=0 $ and $|\beta|=1$, $\gamma=0$; or $\beta=0$ and $|\gamma|=1$.

Finally, if $\sigma (U)$ has at most $3$ points then $\mathcal{OS}y(U)=C_{e}^{\ast }\left( \mathcal{OS}y(U)\right)$. So the complete order isomorphism of the operator systems agrees with the isomorphism of the C$^*$-algebras, which is given by homeomorphism of the spectra. 
\end{proof}

\begin{remark}
Denote by $\mathcal{K}\left( \mathbb{T}\right) $ the space of closed subsets
of $\mathbb{T}$ endowed with the Vietoris topology \cite[4.F]
{kechris_classical_1995}. By \cite[Theorem 1.1]{latrach_facts_2005} the
function $U\mapsto \sigma (U)$ assigning to $U\in \mathcal{U}(H)$ its spectrum is
Borel. Denote by $\mathrm{\mathrm{Iso}m}\left( \mathbb{T}\right) $ the group
of isometries of $\mathbb{T}$ endowed with the topology of pointwise
convergence. Elements of $\mathrm{\mathrm{\mathrm{Iso}m}}\left( \mathbb{T}
\right) $ are of the form $z\mapsto \lambda z$ or $z\mapsto \lambda \bar{z}$
for $\lambda \in \mathbb{T}$. The group of isometries of $\mathbb{T}$ has a
natural continuous action on $\mathcal{K}\left( \mathbb{T}\right) $. Denote
by $E_{\mathrm{Iso}\left( \mathbb{T}\right) }^{\mathcal{K}\left( \mathbb{T}
\right) }$ the corresponding orbit equivalence relation. Observe that, being
the orbit equivalence relation of a continuous action of a compact group, $
E_{\mathrm{Iso}\left( \mathbb{T}\right) }^{\mathcal{K}\left( \mathbb{T}
\right) }$ is in particular smooth. Theorem \ref{Theorem: classification single unitary} shows that $U\mapsto \sigma (U)$ is a Borel
reduction from the relation of complete order isomorphism of operator
systems generated by a single unitary with at least $5$ points in the
spectrum to $E_{\mathrm{Iso}\left( \mathbb{T}\right) }^{\mathcal{K}\left( 
\mathbb{T}\right) }$.
\end{remark}

We do not know exactly what happens when $
\sigma (U)$ has exactly $4$ points. The following example at least shows
that the operator systems $\mathcal{OS}y(U)$ for $U$ unitary with four-point
spectrum are not all isomorphic, so the behaviour seems to be more similar to the $5+$ case. Consider the unitary elements 
\begin{equation*}
U=
\begin{bmatrix}
1 & 0 & 0 & 0 \\ 
0 & -1 & 0 & 0 \\ 
0 & 0 & i & 0 \\ 
0 & 0 & 0 & -i
\end{bmatrix}
\text{,}\quad V=
\begin{bmatrix}
1 & 0 & 0 & 0 \\ 
0 & \frac{1+i}{\sqrt{2}} & 0 & 0 \\ 
0 & 0 & i & 0 \\ 
0 & 0 & 0 & -1
\end{bmatrix}
\end{equation*}
of $M_{4}(\mathbb{C})$. Then 
$
C_{e}^{\ast }(\mathcal{OS}y(U))=C^{\ast }(U)=C^{\ast }(V)=C_{e}^{\ast }(
\mathcal{OS}y(V))
$
is the diagonal MASA of $M_{4}(\mathbb{C})$. Suppose by contradiction that $
\mathcal{OS}y(U)\cong \mathcal{OS}y(V)$. Then Theorem \ref{theorem: coi extends to unitary iso} there is a *-isomorphism $\pi $ from $C^{\ast
}(U)$ onto $C^{\ast }(V)$ mapping $\mathcal{OS}y(U)$ onto $\mathcal{OS}y(V)$
. Then $\pi (U)\in \mathcal{OS}y(V)$ is a diagonal matrix with eigenvalues $
\{1,-1,i,-i\}$ that is in the span of $\{I,V,V^{\ast}\}$. 
Consider the determinant 
\begin{equation*}
\begin{vmatrix}
a & 1 & 1 & 1 \\ 
b & 1 & \frac{1+i}{\sqrt{2}} & \frac{1-i}{\sqrt{2}} \\ 
c & 1 & i & -i \\ 
d & 1 & -1 & -1
\end{vmatrix}
=2i(-a+2b-\sqrt{2}c+(\sqrt{2}-1)d).
\end{equation*}
Observe that no choice of $a,b,c,d$ with $\{a,b,c,d\} = \{1,-1,i,-i\}$ can
make the above determinant equal to zero. This shows that $\pi (U)$ is
linearly independent from $I,V,V^{\ast }$, a contradiction.

\subsection{Uncountably many classes}

We now show that the relation of complete order isomorphism of finitely
generated operator systems has uncountably many classes. In fact there are
uncountably many classes even when restricting to operator systems whose
C*-envelope is $M_{3}(\mathbb{C})$.  For $t\in (0,1]$ consider the operator
\begin{equation*}
W_{t}=
\begin{bmatrix}
0 & 0 & 0 \\ 
1 & 0 & 0 \\ 
0 & t & 0
\end{bmatrix}
\text{.}
\end{equation*}
Observe that $W_{t}$ is irreducible and hence the C*-algebra $C^{\ast
}\left( W_{t}\right) $ generated by $W_{t}$ coincides with $M_{3}(\mathbb{C}
) $. The C*-envelope can always be realized as a quotient of $C^{\ast }\left(
W_{t}\right) $. By simplicity of $M_{3}(\mathbb{C})$ we must have $
C_{e}^{\ast }\left( W_{t}\right) =C^{\ast }\left( W_{t}\right) =M_{3}(
\mathbb{C})$. Denote by $X_{t}$ the operator system $\mathcal{OS}y\left(
W_{t}\right) $ generated by $W_{t}$. We claim that $X_{t}$ and $X_{s}$ are
complete order isomorphic if and only if $s=t$. Suppose that $X_{t}$ and $
X_{s}$ are complete order isomorphic. As mentioned above, the simplicity of $M_3(\mathbb{C})$ makes both $X_t$ and $X_s$ to be reduced. By Theorem \ref{theorem: coi extends to unitary iso}, there is a $*$-automorphism of $M_3(\mathbb{C})$ that maps $X_t$ onto $X_s$. It is well-known that $*$-automorphisms of finite-dimensional C$^*$-algebras are implemented by unitaries, so 
there is a unitary $
U\in M_{3}(\mathbb{C})$ such that the function $\Phi :A\mapsto UAU^{\ast }$
sends $X_{t}$ onto $X_{s}$. In particular
\begin{equation*}
\Phi \left( W_{t}\right) =\alpha I+\beta W_{s}+\gamma W_{s}^{\ast }
\end{equation*}
for some $\alpha,\beta,\gamma\in\mathbb{C}$. 
Denote by $\tau $ the  trace on $M_{3}(\mathbb{C})$. Since $
\Phi $ is trace-preserving we have that
\begin{equation*}
0=\tau \left( W_{t}\right) =\tau \left( \Phi \left( W_{t}\right) \right)
=\alpha \text{.}
\end{equation*}
Also
\begin{equation*}
0=\tau \left( W_{t}^{2}\right) =\tau (\Phi \left( W_{t}\right) ^{2})=
2(1+s^{2})\beta \gamma \text{.}
\end{equation*}
Therefore exactly one between $\beta $ and $\gamma $ is zero (since $W_t\ne0$). Suppose that $
\gamma =0$. In this case, the singular values of $W_t$ are $\{0,1,t\}$ and the singular values of $\Phi(W_t)=\beta W_s$ are $\{0,|\beta|,|\beta|\,s\}$. So either $|\beta|=1$ and $s=t$, or $|\beta|=t$ and $|\beta|\,s=1$. As both $|\beta|\leq1$ and $s\leq1$, this requires $|\beta|=s=1$, and then $t=|\beta|=1=s$.  In a similar way we can conclude $s=t$ in the case $\beta=0$. 

Therefore $\left( X_{t}\right) _{t\in (0,1]}$ is a one-parameter family of
pairwise not completely order isomorphic operator systems with the same
C*-envelope $M_{3}(\mathbb{C})$.

Using arguments similar to those above---just a little more involved---we can also produce an uncountable family of non-isomorphic operator systems with $C^*$-envelope $M_2(\mathbb{C})$, by taking $W_t=\begin{bmatrix}1&0\\ t&0\end{bmatrix}$. 

\subsection{Compact subsets of $\mathbb{C}^{n}$}

Denote by $\mathcal{K}(\mathbb{C})$ the space of compact subsets of $\mathbb{
C}$ endowed with the Effros Borel structure. It can be essentially deduced
from results of \cite{hjorth_classification_2000} that the relation of
homeomorphism of compact subsets of $\mathbb{C}$ is not classifiable by
countable structures. (This is an unpublished observation of Farah-Toms-T
\"{o}rnquist.) Moreover it is an open problem whether this relation has in
fact maximal complexity among all the relations that are classifiable by the
orbits of a continuous action of a Polish group on a Polish space. In this
direction, it was recently shown by Zielinski that the relation of
homeomorphism of compact metrizable spaces has indeed maximal complexity 
\cite{zielinski_complexity_2014}.

In this subsection we consider another related equivalence relation on $
\mathcal{K}(\mathbb{C})$.

\begin{definition}
Suppose that $D,\widetilde{D}$ are closed subsets of $\mathbb{C}$. A \emph{
degree }$1$ \emph{map }(in $z$ and $\bar{z}$ separately) on $D$ is a
function $f$ from $D$ to $\mathbb{C}$ of the form
\begin{equation}
f(z)=\alpha +\beta z+\gamma \bar{z}+\delta z\bar{z}\text{\label
{Equation:degree1-1}}
\end{equation}
for some $\alpha ,\beta ,\gamma ,\delta \in \mathbb{C}$ such that moreover
for some $\alpha ^{\prime },\beta ^{\prime },\gamma ^{\prime },\delta
^{\prime }\in \mathbb{C}$,
\begin{equation}
f(z)\overline{f(z)}=\alpha ^{\prime }+\beta ^{\prime }z+\gamma ^{\prime
}z+\delta ^{\prime }z\bar{z}\text{\label{Equation:degree1-2}}
\end{equation}
for every $z\in D$. A\emph{\ degree }$1$\emph{\ homeomorphism} from $D$ to $
\widetilde{D}$ is a homeomorphism $\varphi :D\rightarrow \widetilde{D}$ such
that $\varphi $ and $\varphi ^{-1}$ are degree $1$ maps. The closed subsets $
D$ and $\widetilde{D}$ of $\mathbb{C}$ are \emph{degree }$1$ \emph{
homeomorphic }if there is a degree $1$ homeomorphism from $D$ to $\widetilde{
D}$.
\end{definition}

The natural definition of degree $1$   map in $z$ and $\bar{z}$
would only involve Condition \ref{Equation:degree1-1}. However, Condition 
\ref{Equation:degree1-2} is necessary to ensure that the relation of degree $
1$ homeomorphism be an equivalence relation. In the rest of this subsection
we will prove Theorem \ref{Theorem: degree 1 dimension 1} below.  

We need to  relate degree $1$ homeomorphism
with complete order isomorphism of operator systems. Suppose that $V,
\widetilde{V}$ are bounded normal linear operators on $X$. Define $X=
\mathcal{OS}y\left( V,VV^{\ast }\right) $, $\widetilde{X}=\mathcal{OS}y(
\widetilde{V},\widetilde{V}\widetilde{V}^{\ast })$. Denote by $A$ and $
\widetilde{A}$ the (commutative) C*-algebras $C^{\ast }(V)$ and $C^{\ast }(
\widetilde{V})$, and denote by $D$ and $\widetilde{D}$ their spectra (which
coincide with the spectra of $V$ and $\widetilde{V}$). The proof of the
following lemma is immediate.

\begin{lemma}
\label{Lemma: degree 1}Under the identifications $A\cong C(D)$ and $
\widetilde{A}\cong C(\widetilde{D})$, isomorphisms from $A$ to $\widetilde{A}
$ mapping $X$ onto $\widetilde{X}$ correspond to degree $1$ homeomorphisms
from $D$ to $\widetilde{D}$.
\end{lemma}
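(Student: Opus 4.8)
The plan is to reduce the statement entirely to Gelfand duality. The first step is to make the two identifications concrete. Since $V$ is normal, under $A\cong C(D)$ the operator $V$ becomes the coordinate function $z\mapsto z$ on $D=\sigma(V)$, so that $V^{*}\leftrightarrow \bar z$ and $VV^{*}\leftrightarrow z\bar z$. Consequently $X=\mathcal{OS}y(V,VV^{*})$ is carried onto the (finite-dimensional, hence closed) subspace $\mathrm{span}\{1,z,\bar z,z\bar z\}\subseteq C(D)$, and likewise $\widetilde X$ onto $\mathrm{span}\{1,w,\bar w,w\bar w\}\subseteq C(\widetilde D)$, where $w$ denotes the coordinate function on $\widetilde D$. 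These are exactly the spaces of degree $1$ functions appearing in \eqref{Equation:degree1-1}.

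Next I would invoke Gelfand duality for the commutative unital C*-algebras $C(D)$ and $C(\widetilde D)$: every $*$-isomorphism $\pi\colon A\to\widetilde A$ has the form $\pi(g)=g\circ h$ for a unique homeomorphism $h\colon\widetilde D\to D$, and $\pi\mapsto h$ is a bijection onto the set of all homeomorphisms $\widetilde D\to D$. Writing $\varphi=h^{-1}\colon D\to\widetilde D$, the assertion of the lemma becomes the claim that $\pi$ maps $X$ onto $\widetilde X$ if and only if $\varphi$ is a degree $1$ homeomorphism.

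Then I would translate the two membership conditions. Because $X$ is spanned by $1,z,\bar z,z\bar z$ and $\widetilde X$ is a self-adjoint subspace, the inclusion $\pi(X)\subseteq\widetilde X$ holds exactly when $\pi(z)=h$ and $\pi(z\bar z)=h\bar h=|h|^{2}$ both lie in $\widetilde X$; the condition $\pi(\bar z)=\overline h\in\widetilde X$ is then automatic, since $\widetilde X=\overline{\widetilde X}$. The first requirement says that $h=\varphi^{-1}$ satisfies \eqref{Equation:degree1-1} and the second that it satisfies \eqref{Equation:degree1-2}, so together they say precisely that $\varphi^{-1}$ is a degree $1$ map. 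Running the identical computation for $\pi^{-1}(g)=g\circ\varphi$ shows that $\pi^{-1}(\widetilde X)\subseteq X$ holds if and only if $\varphi$ is a degree $1$ map. Since $\pi$ is an isomorphism, $\pi(X)=\widetilde X$ is equivalent to both inclusions, hence to both $\varphi$ and $\varphi^{-1}$ being degree $1$ maps, which is exactly the definition of a degree $1$ homeomorphism from $D$ to $\widetilde D$; the assignment $\pi\mapsto\varphi$ is then the asserted correspondence.

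There is no serious obstacle here, and indeed the author calls the lemma immediate; it is essentially a dictionary translation. The only points that deserve care are the contravariance of Gelfand duality (the dual homeomorphism $h$ runs $\widetilde D\to D$, so the degree $1$ homeomorphism of the statement is its inverse $\varphi$) and the observation that it is surjectivity of $\pi$, and not merely $\pi(X)\subseteq\widetilde X$, that forces the inverse map to be degree $1$ as well. This is exactly why condition \eqref{Equation:degree1-2}, and hence the two-sided requirement built into the definition of a degree $1$ homeomorphism, cannot be dropped.
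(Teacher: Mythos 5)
Your proof is correct, and it is precisely the Gelfand-duality dictionary that the paper has in mind when it declares the lemma immediate (the paper offers no written proof of it). The two points you flag --- the contravariance of the duality, so that the degree $1$ homeomorphism of the statement is the inverse of the dual map $h$, and the fact that it is surjectivity of $\pi|_{X}$ onto $\widetilde{X}$ that forces $\varphi$ itself (and not only $\varphi^{-1}=h$) to be a degree $1$ map --- are exactly the content being suppressed by the word ``immediate.''
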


Using Lemma \ref{Lemma: degree 1} we can relate degree $1$ homeomorphism of
compact sets with complete order isomorphism of operator systems.

\begin{lemma}
\label{Lemma: iso osy normal}With the notations above, the following
statements are equivalent:

\begin{enumerate}
\item $X$ and $\widetilde{X}$ are completely order isomorphic;

\item there is an isomorphism from $A$ to $\widetilde{A}$ mapping $X$ onto $
\widetilde{X}$;

\item $D$ and $\widetilde{D}$ are degree $1$ homeomorphic.
\end{enumerate}
\end{lemma}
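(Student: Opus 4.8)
The plan is to prove the cycle of implications $(1)\Rightarrow(3)\Rightarrow(2)\Rightarrow(1)$, using Lemma~\ref{Lemma: degree 1} to handle the equivalence $(2)\Leftrightarrow(3)$ and the earlier structural results to get the hard direction $(1)\Rightarrow(2)$. The implication $(2)\Rightarrow(1)$ is essentially free: if $\psi:A\to\widetilde A$ is a $*$-isomorphism carrying $X$ onto $\widetilde X$, then its restriction $\psi|_X$ is a unital complete order isomorphism, since any $*$-isomorphism is automatically unital completely positive with completely positive inverse. The equivalence $(2)\Leftrightarrow(3)$ is exactly the content of Lemma~\ref{Lemma: degree 1} together with the identifications $A\cong C(D)$ and $\widetilde A\cong C(\widetilde D)$, so I would simply invoke that lemma.

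The substantive step is $(1)\Rightarrow(2)$, and this is where I expect the main obstacle to lie. The idea is to show that $A$ and $\widetilde A$ are the C*-envelopes of $X$ and $\widetilde X$ respectively, and then apply Theorem~\ref{theorem: coi extends to unitary iso} to extend the given complete order isomorphism $\phi:X\to\widetilde X$ to a $*$-isomorphism $\tilde\phi:C^*(X)\to C^*(\widetilde X)$. Here I would use Lemma~\ref{Lemma: envelope}: since $X=\mathcal{OS}y(V,VV^*)$ is the operator system generated by $\{V,V^*V,VV^*\}$ (with $n=1$ and $V_1=V$), that lemma tells us precisely that the C*-envelope of $X$ coincides with the C*-algebra $C^*(V)=A$ generated by $X$ inside $B(H)$, and similarly $C_e^*(\widetilde X)=\widetilde A$. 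In particular $X$ and $\widetilde X$ are reduced, so Theorem~\ref{theorem: coi extends to unitary iso} applies directly.

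First I would record that the normality of $V$ guarantees $V^*V=VV^*$, so the generating set in Lemma~\ref{Lemma: envelope} really does produce $C^*(V)$ and the hypotheses of that lemma are met. Then, given a complete order isomorphism $\phi:X\to\widetilde X$ witnessing $(1)$, Theorem~\ref{theorem: coi extends to unitary iso} yields a $*$-isomorphism $\tilde\phi:A\to\widetilde A$ with $\tilde\phi|_X=\phi$; in particular $\tilde\phi$ maps $X$ onto $\widetilde X$, which is exactly statement $(2)$. This closes the cycle.

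The only point requiring care is the verification that Lemma~\ref{Lemma: envelope} genuinely applies in the commutative normal setting, i.e.\ that the operator system $\mathcal{OS}y(V,VV^*)$ equals the operator system generated by $\{V,V^*V,VV^*\}$ of that lemma; because $V$ is normal these two generating sets produce the same space, so the identification is immediate. With that observation the proof is a short assembly of the three cited results, and I would write it as: $(2)\Rightarrow(1)$ by restriction, $(2)\Leftrightarrow(3)$ by Lemma~\ref{Lemma: degree 1}, and $(1)\Rightarrow(2)$ by combining Lemma~\ref{Lemma: envelope} with Theorem~\ref{theorem: coi extends to unitary iso}.
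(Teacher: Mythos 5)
Your proposal is correct and follows essentially the same route as the paper: Lemma~\ref{Lemma: envelope} identifies $A$ and $\widetilde{A}$ as the C*-envelopes (hence $X$, $\widetilde{X}$ are reduced), Theorem~\ref{theorem: coi extends to unitary iso} gives the equivalence of (1) and (2), and Lemma~\ref{Lemma: degree 1} gives the equivalence of (2) and (3). Your explicit check that normality of $V$ makes the generating set of Lemma~\ref{Lemma: envelope} match $\mathcal{OS}y(V,VV^{\ast})$ is a detail the paper leaves implicit, but the argument is the same.
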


\begin{proof}
By Lemma \ref{Lemma: envelope} the C*-envelopes of $X$ and $\widetilde{X}$
can be identified with $A$ and $\widetilde{A}$. Therefore the equivalence of
(1) and (2) follows from Theorem \ref{theorem: coi extends to unitary iso}.
The equivalence of (2) and (3) is a consequence of Lemma \ref{Lemma: degree
1}.
\end{proof}

Now we can state and prove the aforementioned result. 

\begin{theorem}
\label{Theorem: degree 1 dimension 1}The relation of degree $1$
homeomorphism of subsets of $\mathbb{C}$ is smooth.
\end{theorem}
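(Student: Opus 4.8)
The plan is to realize the relation of degree $1$ homeomorphism as a Borel image, via the correspondence of Lemma \ref{Lemma: iso osy normal}, of the relation of complete order isomorphism of finitely generated operator systems, and then to invoke Theorem \ref{Theorem: iso fg osy} together with the standard fact that an equivalence relation Borel reducible to a smooth one is itself smooth. Concretely, to each compact set $D\in\mathcal{K}(\mathbb{C})$ I would associate a normal operator $V_{D}\in B(H)$ with $\sigma(V_{D})=D$ and then the operator system $X_{D}=\mathcal{OS}y(V_{D},V_{D}V_{D}^{\ast})$. By Lemma \ref{Lemma: iso osy normal} the sets $D$ and $\widetilde{D}$ are degree $1$ homeomorphic if and only if $X_{D}$ and $X_{\widetilde{D}}$ are completely order isomorphic, so $D\mapsto X_{D}$ is a reduction; the only thing left to check is that it can be implemented by a Borel map into one of the parametrizations of Section \ref{Section:parametrizing}.

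To build $V_{D}$ in a Borel fashion, first I would apply the Kuratowski--Ryll-Nardzewski selection theorem \cite[Theorem 12.13]{kechris_classical_1995} to obtain Borel functions $d_{n}\colon\mathcal{K}(\mathbb{C})\to\mathbb{C}$ such that $\{d_{n}(D):n\in\mathbb{N}\}$ is dense in $D$ for every $D$. I would then let $V_{D}$ be the diagonal operator on $H=\ell^{2}(\mathbb{N})$ whose diagonal entries are $(d_{n}(D))_{n\in\mathbb{N}}$. This operator is normal, it is bounded by $\max_{z\in D}|z|$, and since $D$ is closed one has $\sigma(V_{D})=\overline{\{d_{n}(D):n\in\mathbb{N}\}}=D$. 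Because the matrix entries of $V_{D}$ in the standard basis are Borel functions of $D$, the assignment $D\mapsto V_{D}$ is Borel into $B(H)$ equipped with its weak operator Borel structure, and hence $D\mapsto(V_{D},V_{D}V_{D}^{\ast},0,0,\dots)\in\Gamma$ is a Borel map coding $X_{D}$.

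Each $X_{D}$ is the linear span of $\{I,V_{D},V_{D}^{\ast},V_{D}V_{D}^{\ast}\}$ and is therefore a finitely generated (indeed at most $4$-dimensional) operator system, so Theorem \ref{Theorem: iso fg osy} applies and tells us that complete order isomorphism of such systems is smooth. Combining this with the reduction $D\mapsto X_{D}$ of the previous paragraph shows that degree $1$ homeomorphism is Borel reducible to a smooth equivalence relation, and is therefore smooth.

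The main obstacle is the verification that the whole assignment $D\mapsto X_{D}$ is genuinely Borel: one must select a dense sequence in $D$ uniformly in $D$ and then confirm that the resulting normal operator, and the code it determines, depend in a Borel way on $D$ in the chosen parametrization. Once this is granted the conclusion is immediate from Lemma \ref{Lemma: iso osy normal} and Theorem \ref{Theorem: iso fg osy}; in particular it is the diagonal construction that guarantees $V_{D}$ is normal, which is precisely the hypothesis under which Lemma \ref{Lemma: iso osy normal} delivers the equivalence between degree $1$ homeomorphism and complete order isomorphism.
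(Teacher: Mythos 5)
Your proof is correct and follows essentially the same route as the paper: reduce degree $1$ homeomorphism to complete order isomorphism of finitely generated operator systems via Lemma \ref{Lemma: iso osy normal}, then invoke Theorem \ref{Theorem: iso fg osy}. The only difference is implementational: you realize the system $\mathcal{OS}y(V_D,V_DV_D^{\ast})$ concretely in the parametrization $\Gamma$ by building a diagonal normal operator from Kuratowski--Ryll-Nardzewski selectors, whereas the paper codes the operator system generated by the identity function inside $C(D)$ in the parametrization $\Xi$; since these parametrizations are shown equivalent in Appendix \ref{Appendix:equivalenceOSy}, the two implementations are interchangeable.
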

\begin{proof}
Working in the parametrization $\Xi $, it is clear that there is a Borel
function from $\mathcal{K}(\mathbb{C})$ to $\Xi $ that assigns to a compact
subset $X$ of $\mathbb{C}$ a code $\xi _{X}$ for the operator system
generated by the identity function inside the C*-algebra $C(X)$ of
continuous complex-valued functions on $X$. By Lemma \ref{Lemma: iso osy
normal} this is a Borel reduction from the relation of degree $1$
homeomorphism of compact subsets of $\mathbb{C}$ to the relation of complete
order isomorphism of finitely generated operator systems. In view of Theorem 
\ref{Theorem: iso fg osy} in particular this shows that the relation of
degree $1$ homeomorphism of compact subsets of $\mathbb{C}$ is smooth.
\end{proof}

We can also consider a natural generalization to compact subsets of $\mathbb{C}
^{n}$.

\begin{definition}
Suppose that $D,\widetilde{D}$ are closed subsets of $\mathbb{C}^{n}$. Set $
z_{0}=1$. A \emph{degree }$1$ map on $D$ is a function from $D$ to $\mathbb{C
}^{n}$ of the form
\begin{equation*}
\left( z_{1},\ldots ,z_{n}\right) \mapsto \left( f_{1}\left( z_{1},\ldots
,z_{n}\right) ,\ldots ,f_{n}\left( z_{1},\ldots ,z_{n}\right) \right)
\end{equation*}
where for $1\leq k\leq n$ 
\begin{equation*}
f_{k}\left( z_{1},\ldots ,z_{n}\right) =\sum_{i,j=0}^{n}\beta _{ij}^{\left(
k\right) }z_{i}\bar{z}_{j}
\end{equation*}
for some $\beta _{ij}^{\left( k\right) }\in \mathbb{C}$ such that moreover
for every $1\leq k,m\leq n$,
\begin{equation*}
f_{k}\left( z_{1},\ldots ,z_{n}\right) \overline{f_{m}\left( z_{1},\ldots
,z_{n}\right) }=\sum_{i,j=0}^{n}\beta _{ij}^{\left( k,m\right) }z_{i}\bar{z}
_{j}
\end{equation*}
for some $\beta _{ij}^{\left( k,m\right) }\in \mathbb{C}$. A\emph{\ degree }$
1$\emph{\ homeomorphism} from $D$ to $\widetilde{D}$ is a homeomorphism $
\varphi :D\rightarrow \widetilde{D}$ such that $\varphi $ and $\varphi ^{-1}$
are degree $1$ maps. The closed subsets $D$ and $\widetilde{D}$ of $\mathbb{C
}^{n}$ are \emph{degree }$1$ \emph{homeomorphic }if there is a degree $1$
homeomorphism from $D$ to $\widetilde{D}$.
\end{definition}

Again this is an equivalence relation for compact subsets of $\mathbb{C}^{n}$
.

\begin{theorem}
\label{Theorem: degree 1 dimension n}The relation of degree $1$
homeomorphism of subsets of $\mathbb{C}^{n}$ is smooth.
\end{theorem}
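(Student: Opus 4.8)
The plan is to imitate the proof of Theorem~\ref{Theorem: degree 1 dimension 1}: produce a Borel reduction from degree $1$ homeomorphism of compact subsets of $\mathbb{C}^{n}$ to complete order isomorphism of finitely generated operator systems, and then appeal to Theorem~\ref{Theorem: iso fg osy}. To each compact $D\subseteq\mathbb{C}^{n}$ I would attach the operator system recording the degree $1$ data, as follows. Let $V_{1},\ldots,V_{n}$ denote the (commuting, normal) operators of multiplication by the coordinate functions $z_{1},\ldots,z_{n}$ inside $C(D)$, and set
\begin{equation*}
X_{D}=\mathrm{span}\left\{ 1,V_{i},V_{i}^{\ast },V_{i}V_{j}^{\ast }:1\le i,j\le n\right\}\text{,}
\end{equation*}
which, under the identification $C^{\ast }(V_{1},\ldots,V_{n})\cong C(D)$, is the finite-dimensional (hence finitely generated) operator system spanned by the functions $z_{i}\bar z_{j}$ with $0\le i,j\le n$, where $z_{0}=1$.

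The central step is to prove the analogue of Lemma~\ref{Lemma: iso osy normal}: that $X_{D}$ and $X_{\widetilde{D}}$ are completely order isomorphic if and only if $D$ and $\widetilde{D}$ are degree $1$ homeomorphic. Since $X_{D}$ contains each $V_{i}$ together with the element $z_{i}\bar z_{i}=V_{i}V_{i}^{\ast }=V_{i}^{\ast }V_{i}$, the multiplicative-domain argument of Lemma~\ref{Lemma: envelope} applies verbatim: each $V_{i}$ lies in the multiplicative domain of any unital completely positive extension of the restriction to $X_{D}$ of a representation of $C^{\ast }(X_{D})$, so $X_{D}$ is reduced with C*-envelope $C^{\ast }(X_{D})=C(D)$ (the last equality holding by Stone--Weierstrass, as $z_{i},\bar z_{i}\in X_{D}$ separate the points of $D$). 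By Theorem~\ref{theorem: coi extends to unitary iso} a complete order isomorphism $X_{D}\cong X_{\widetilde{D}}$ then extends to a $\ast$-isomorphism $C(D)\to C(\widetilde{D})$ carrying $X_{D}$ onto $X_{\widetilde{D}}$, and conversely any such $\ast$-isomorphism restricts to a complete order isomorphism. Finally, by Gelfand duality such a $\ast$-isomorphism $\Phi$ is dual to a homeomorphism $h$ of the spectra, and since $\Phi$ is multiplicative one has $\Phi(z_{i}\bar z_{j})=h_{i}\overline{h_{j}}$; requiring that $\Phi$ send $X_{D}$ onto $X_{\widetilde{D}}$ therefore forces each component $h_{k}$ and each product $h_{i}\overline{h_{j}}$ to lie in the span of the coordinate products on $\widetilde{D}$, which are exactly the two defining conditions for $h$ to be a degree $1$ map (and, by surjectivity, for $h^{-1}$ as well). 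I expect this generalization of Lemma~\ref{Lemma: degree 1} to be the main point to get right: the first condition only controls the images of the linear generators, and it is the product condition that is needed---and, crucially, is also sufficient---to guarantee that the entire operator system $X_{D}$, and not merely the linear span of its generators, is preserved.

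The remaining step is the Borel computation. Working in the parametrization $\Xi$, the assignment $D\mapsto \xi_{X_{D}}$ sending a compact set to a code for $X_{D}$ is Borel, via the same Kuratowski--Ryll-Nardzewski selection of a dense sequence in $C(D)$ and computation of matricial norms used in the proof of Theorem~\ref{Theorem: degree 1 dimension 1}. Composing this Borel map with the equivalence of the previous paragraph yields a Borel reduction from degree $1$ homeomorphism of compact subsets of $\mathbb{C}^{n}$ to complete order isomorphism of finitely generated operator systems. The latter relation is smooth by Theorem~\ref{Theorem: iso fg osy}, and hence so is the former. The only genuinely new ingredient relative to the case $n=1$ is the bookkeeping with the cross terms $z_{i}\bar z_{j}$ for $i\ne j$ in the second step; the reducedness argument and the Borel estimates carry over without essential change.
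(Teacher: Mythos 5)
Your proposal is correct and takes essentially the same approach as the paper's proof: the same operator system generated by $V_{1},\ldots ,V_{n}$ and $V_{i}V_{j}^{\ast }$, the same appeal to Lemma~\ref{Lemma: envelope}, Theorem~\ref{theorem: coi extends to unitary iso}, Stone--Weierstrass, and a Borel selection in the parametrization $\Xi $, concluding via Theorem~\ref{Theorem: iso fg osy}. The only difference is that you spell out the Gelfand-duality bookkeeping with the cross terms $z_{i}\bar{z}_{j}$, which the paper compresses into the remark that ``the proof of Lemma~\ref{Lemma: iso osy normal} shows'' the equivalence.
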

\begin{proof} We work along a similar path as the proof of 
Theorem \ref{Theorem: degree 1 dimension 1}, by considering tuples $\left(
V_{1},\ldots ,V_{n}\right) $ of pairwise commuting bounded normal linear
operators on $H$ and the operator system $X$ generated by $V_{1},\ldots
,V_{n}$ and $V_{i}V_{j}^{\ast }$ for $1\leq i,j\leq n$. It is a consequence
of Lemma \ref{Lemma: envelope} that the C*-envelope of $X$ can be identified
with $C^{\ast }\left( V_{1},,\ldots ,V_{n}\right) $. Therefore the proof of
Lemma \ref{Lemma: iso osy normal} shows that two such operator systems are
completely order isomorphic if and only if their spectra are degree $1$
homeomorphic. Working in the parametrization $\Xi $ one can easily see that
there is a Borel map assigning to a compact subset $D$ of $\mathbb{C}^{n}$
the operator system generated by $V_{1},\ldots ,V_{n}$ and $V_{i}V_{j}^{\ast
}$ for $1\leq i,j\leq n$, where $V_{1},\ldots ,V_{n}\in C(D)$ are the
coordinate projections. The proof is concluded by observing that $C(D)$ is
generated by $V_{1},\ldots ,V_{n}$ as a C*-algebra by the Stone-Weierstrass
theorem.
\end{proof}

\section{Smooth classification of structures\label{Section:structures}}

In this section we isolate the model-theoretic content of Theorem~\ref
{Theorem: iso fg osy} and show that the isometric classification of any
class of structures that are compact or at least locally compact will be
smooth. We use the logic for metric structures as defined in \cite[Section
2.1]{farah_model_?}. In particular we consider possibly unbounded metric
structures, endowed with suitable \emph{domains of quantification}$\emph{s}$
. The allowed quantifiers are $\sup_{x\in D}$ and $\inf_{x\in D}$ for some
domain of quantification $D$. The more usual framework of logic for metric
structures from \cite{ben_yaacov_model_2008} only considers structures with
metric bounded by $1$ where the only domain of quantification is the whole structure.

Let $\mathcal{L}$ be a countable language in this logic, and enumerate its
domains of quantification $\left( D_{n}\right) _{n\in \mathbb{N}}$. We can
suppose that $\mathcal{L}$ contains only relation symbols; otherwise we
replace each function symbol with a relation symbol to be interpreted as the
graph of the function. We can also suppose that $\mathcal{L}$ has just one
sort; the multi-sorted version of our result may be easily obtained by a
suitable adaptation of the argument.

We work in the parametrization for separable $\mathcal{L}$-structures
similar to the one considered in \cite{ben_yaacov_lopez-escobar_2014}. Write 
$\mathbb{N}$ as an increasing union $\bigcup_{n}Q_{n}$ of infinite sets such
that $Q_{n+1}\setminus Q_{n}$ is infinite for every $n\in \mathbb{N}$. An $
\mathcal{L}$-structure is regarded as an element of $\prod_{B}\mathbb{R}
^{\left\vert B\right\vert }$ where $B$ varies over all the relation symbols
in $\mathcal{L}$ (including the symbol for the metric $d$) and $\left\vert
B\right\vert $ is the arity of $B$. Any element $f=\left( f_{B}\right) _{B}$
of $\prod_{B}\mathbb{R}^{\left\vert B\right\vert }$ with the right uniform
continuity moduli codes and $\mathcal{L}$-structure $M$ that has as support
the completion of $\mathbb{N}$ with respect to the metric $f_{d}$ on $
\mathbb{N}$. The interpretation of the domain of quantification $D_{n}$ is
the closure inside $M$ of $Q_{n}$. Finally the interpretation of a relation
symbol $B$ is obtained by considering the unique (uniformly) continuous
extension of $f_{B}$ to $M^{\left\vert B\right\vert }$. As observed in \cite
{ben_yaacov_lopez-escobar_2014} the set of tuples $f_{B}\in \mathbb{R}^{B}$
that code an $\mathcal{L}$-structure is a Borel subset of $\prod_{B}\mathbb{R
}^{\left\vert B\right\vert }$ and hence a standard Borel space.

\begin{definition}
An $\mathcal{L}$-structure $M$ is \emph{proper} if the interpretation $D^{M}$
of every domain of quantification is compact.
\end{definition}

It is easy to check that the set $\mathrm{Mod}_{p}\left( \mathcal{L}\right) $
of codes for proper $\mathcal{L}$-structures is a Borel. In fact $\left(
f_{B}\right) \in \mathrm{Mod}_{p}\left( \mathcal{L}\right) $ if and only if $
\left( f_{B}\right) $ is a code for an $\mathcal{L}$-structure and moreover $
\forall n,k\in \mathbb{N}$ $\exists m\in \mathbb{N}$ $\exists x_{1},\ldots
,x_{m}\in Q_{n}$ such that $\forall x\in Q_{n}$ $\exists i\leq n$ such that $
d_{B}\left( x,x_{i}\right) <2^{-k}$.

Following \cite[Definition 1.1]{ben_yaacov_fraisse_2014} we say that a
function $f$ from a metric space $X$ to $\left[ 0,+\infty \right] $ is Kat
\v{e}tov if 
\begin{equation*}
\left\vert f(x)-f(y)\right\vert \leq d\left( x,y\right) \leq f(x)+f(y)
\end{equation*}
for every $x,y\in X$. A function $\psi :X\times Y\rightarrow \left[
0,+\infty \right] $ is an \emph{approximate isometry }from $X$ to $Y$ if it
is separately Kat\v{e}tov in each argument. Equivalently an approximate
isometry from $X$ to $Y$ is a code a metric on the disjoint union of $X$ and 
$Y$ that extends the given metrics on $X$ and $Y$ obtained by setting $
\widehat{d}\left( x,y\right) =\psi \left( x,y\right) $ for $x\in X$ and $
y\in Y$. If $\psi $ is an approximate isometry from $X$ to $Y$ we write $
\psi :X\leadsto Y$. If $X_{0}$ and $Y_{0}$ are subspaces of $X$ and $Y$ and $
\psi :X\leadsto Y$, then one can consider the restriction-truncation $\psi
:X_{0}\leadsto Y_{0}$ which is just the restriction of $\psi $ to $
X_{0}\times Y_{0}$. An approximate isometry is $\varepsilon $-bijection if
for every $r>\varepsilon $ and every $x\in X$ there is $y\in Y$ such that $
\psi \left( x,y\right) <r$ and for every $y\in Y$ there is $x\in X$ such
that $\psi \left( x,y\right) <r$. In the following we will use other
notations and conventions from \cite[Definition 1.1]{ben_yaacov_fraisse_2014}
regarding approximate isometries. Suppose now that $M,N$ are $\mathcal{L}$
-structures, $\psi :M\leadsto N$, and $B\in \mathcal{L}$ is an $n$-ary
relation symbol. Identify the interpretation $B^{M}$ with its graph, and
regard it as a metric space endowed with the $\max $-metric. Define the
approximate isometry $\psi ^{B}:B^{M}\leadsto B^{N}$ by
\begin{equation*}
\psi ^{B}\left( \bar{x},B(\bar{x}),\bar{y},B(\bar{y})\right) =\max \left\{
\psi \left( x_{1},y_{1}\right) ,\ldots ,\psi \left( x_{n},y_{n}\right)
,\left\vert B(\bar{x})-B(\bar{y})\right\vert \right\} \text{.}
\end{equation*}

\begin{definition}
Suppose that $M,N$ are $\mathcal{L}$-structures. Fix $\varepsilon >0$, $k\in 
\mathbb{N}$, a domain of quantification $D$ in $\mathcal{L}$ and a subset $
\mathcal{L}_{0}$ of $\mathcal{L}$. An $\left( \mathcal{L}_{0},\varepsilon
\right) $\emph{-approximate isomorphism} from $M$ to $N$ is an approximate
isometry $M\leadsto N$ such that $\psi ^{B}$ is an $\varepsilon $-isometry
for every $B\in \mathcal{L}_{0}$. A $\left( D,\mathcal{L}_{0},\varepsilon
\right) $-approximate isomorphism from $M$ to $N$ is an approximate isometry 
$\psi :M\leadsto N$ such that the restriction-truncation $\psi
:D_{k}(M)\rightarrow D_{k}(N)$ is an $\left( \mathcal{L}_{0},\varepsilon
\right) $-approximate isomorphism from $D(M)$ to $D(N)$.
\end{definition}

Write the language $\mathcal{L}$ as a countable increasing union of finite
languages $\mathcal{L}_{k}$ for $k\in \mathbb{N}$ such that the metric
symbol $d$ belongs to $\mathcal{L}_{1}$. If $k\in \mathbb{N}$ and $M,N$ are $
\mathcal{L}$-structures define $d_{k}\left( M,N\right) $ to be the infimum
of $\varepsilon >0$ such that there is a $\left( D_{k},\mathcal{L}
_{k},\varepsilon \right) $-approximate isomorphism from $M$ to $N$. The
boundedness requirement on the values of the interpretations of relation
symbols in $\mathcal{L}$-structures shows that $d_{k}\left( M,N\right) $ is
finite. Define then the Gromov--Hausdorff distance
\begin{equation*}
d_{GH}\left( M,N\right) =\sum_{k\in \mathbb{N}}2^{-k}d\left( M,N\right) 
\text{.}
\end{equation*}

Recall the definition of formula from \cite[Section 2.4]{farah_model_?}. A
formula is \emph{universal} if it only uses the quantifier $\sup $.
Similarly a formula is \emph{existential }if it only uses the quantifier $
\inf $. For every $n\in \mathbb{N}$ let us fix a uniformly dense countable
set $\mathcal{F}_{n}$ of functions from the interval $\left[ -n,n\right] $
to itself. A formula is \emph{restricted }if it only uses connectives from $
\bigcup_{n}\mathcal{F}_{n}$. Observe that there are only countably many
restricted $\mathcal{L}$-formulas. (Recall that we are assuming $\mathcal{L}$
to be countable.)

\begin{proposition}
\label{Proposition: isomorphism and theory}Suppose that $M$ and $N$ are two
proper $\mathcal{L}$-structures. The following statements are equivalent:

\begin{enumerate}
\item $\varphi ^{M}=\varphi ^{N}$ for every restricted universal sentence $
\varphi $;

\item $\varphi ^{M}=\varphi ^{N}$ for every universal sentence $\varphi $;

\item $\varphi ^{M}=\varphi ^{N}$ for every existential sentence $\varphi $;

\item $d_{GH}\left( M,N\right) =0$;

\item $M$ and $N$ are isomorphic;

\item $M$ and $N$ are bi-embeddable.
\end{enumerate}
\end{proposition}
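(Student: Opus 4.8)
The plan is to prove all six statements equivalent by running the cycle
$(5)\Rightarrow(2)\Rightarrow(1)\Rightarrow(4)\Rightarrow(5)$, together with the two auxiliary equivalences $(2)\Leftrightarrow(3)$ and $(5)\Leftrightarrow(6)$; a quick check shows the resulting implication graph is strongly connected. Most links are soft. The implication $(5)\Rightarrow(2)$ holds because an isomorphism preserves the value of every sentence, and $(1)\Leftrightarrow(2)$ holds because every restricted universal sentence is universal, while conversely the value of any universal sentence is approximated to arbitrary precision by restricted ones, the connectives being drawn from the uniformly dense sets $\mathcal{F}_n$. For $(2)\Leftrightarrow(3)$ I would use the duality between $\sup$ and $\inf$: since the interpretations of all relation symbols are bounded, to each universal formula $\varphi$ one associates, by induction on complexity (replacing every $\sup$ by $\inf$ and every connective $u$ by $t\mapsto -u(-t)$), an existential formula $\varphi'$ with $(\varphi')^M=-\varphi^M$ in every structure; hence agreement on all universal sentences is the same as agreement on all existential sentences. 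Finally $(5)\Rightarrow(6)$ is trivial, an isomorphism being a pair of mutually inverse embeddings.

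The two implications carrying the real content are $(1)\Rightarrow(4)$ and $(4)\Rightarrow(5)$, together with the converse $(6)\Rightarrow(5)$; all three use properness, i.e.\ compactness of the interpretations $D_k^M$ and $D_k^N$, in an essential way. For $(1)\Rightarrow(4)$ I must show $d_k(M,N)=0$ for each $k$, that is, exhibit a $(D_k,\mathcal{L}_k,\varepsilon)$-approximate isomorphism for every $\varepsilon>0$. The key is a type-matching step: given a finite tuple $\bar a$ in $D_k^M$, the existential sentence $\inf_{\bar y\in D_k}\max_{B\in\mathcal{L}_k,\,I}\,|B(\bar y_I)-B^M(\bar a_I)|$ has value $0$ in $M$ (witnessed by $\bar y=\bar a$); since the connectives $t\mapsto|t-r|$ can be approximated uniformly by elements of $\bigcup_n\mathcal{F}_n$, the hypothesis (1) together with $(1)\Rightarrow(2)\Leftrightarrow(3)$ forces this value to be near $0$ in $N$, producing a tuple $\bar b$ in $D_k^N$ realizing approximately the same $\mathcal{L}_k$-type, and symmetrically. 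I would then fix, using compactness, finite $\delta$-nets of $D_k^M$ and $D_k^N$, match their types in both directions, and glue the resulting finite correspondence into an approximate isometry $\psi\colon M\leadsto N$ (for instance via a shortest-path ``bridge'' metric through the matched pairs), checking that the induced $\psi^B$ is an $\varepsilon$-isometry for each $B\in\mathcal{L}_k$ and that $\psi$ is $\varepsilon$-bijective on $D_k$.

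For $(4)\Rightarrow(5)$ I would run a diagonal limit argument in the spirit of the compactness proof for $OSy(N)$ given earlier. From $d_{GH}(M,N)=0$ one obtains, for all $k$ and $j$, a $(D_k,\mathcal{L}_k,1/j)$-approximate isomorphism; fixing a countable dense subset of $M$ and using compactness of the domains $D_k^N$ to extract convergent images by a diagonalization, I would build an isometry $\phi$ on a dense subset of $M$ that preserves the interpretation of every relation symbol exactly (the errors tend to $0$), hence extends to an embedding $M\to N$. This embedding is surjective because the approximate isomorphisms are $\varepsilon$-bijective, so its image is dense in each compact $D_k^N$ and therefore dense, and by completeness of $M$ closed, in $N$; thus $\phi$ is an isomorphism. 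The remaining implication $(6)\Rightarrow(5)$ rests on the classical fact that an isometric self-embedding of a compact metric space is automatically surjective: given embeddings $g\colon M\to N$ and $h\colon N\to M$, the composite $h\circ g$ restricts on each compact domain $D_k^M$ to a self-embedding, hence is onto $D_k^M$, so $g$ is surjective on a dense subset and is therefore an isomorphism. I expect the main obstacle to be $(1)\Rightarrow(4)$: turning the purely syntactic equality of sentence values into a genuine geometric correspondence, where both the density of the connective set $\bigcup_n\mathcal{F}_n$ and---crucially---the compactness of the domains of quantification are needed to pass from ``every finite type over $M$ is approximately realized in $N$'' to ``finite nets can be matched and assembled into an approximate isomorphism.''
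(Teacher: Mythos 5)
Your overall architecture is sound: the implication graph is indeed strongly connected, the soft implications ((5)$\Rightarrow$(2), (1)$\Leftrightarrow$(2), (2)$\Leftrightarrow$(3) by the negation/duality trick, (5)$\Rightarrow$(6)) are handled essentially as in the paper, and your (4)$\Rightarrow$(5) is the same argument the paper gives: extract, by compactness of the domains, convergent images of a countable dense set along a sequence of $(D_k,\mathcal{L}_k,2^{-k})$-approximate isomorphisms, pass to a limit isometry preserving all relations, and extend. Your (6)$\Rightarrow$(5) is a genuinely different route from the paper's: the paper closes the cycle with the trivial observation that universal sentence values are monotone under embeddings (so bi-embeddability gives (1) immediately), whereas you invoke the classical fact that an isometric self-map of a compact metric space is surjective. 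Your version works (modulo a small slip: surjectivity of $h\circ g$ on each $D_k^M$ gives that $h$ is onto, not $g$; apply the same argument to $g\circ h$, or just conclude that $h$ is an isomorphism), and it is a nice self-contained alternative.

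The genuine gap is in the step you yourself identify as carrying the real content, the passage from agreement of sentence values to $d_k(M,N)=0$. Your type-matching step is fine: for a tuple $\bar a$ in $D_k^M$ the existential sentence $\inf_{\bar y\in D_k}\max_{B\in\mathcal{L}_k,I}\left\vert B(\bar y_I)-B^M(\bar a_I)\right\vert$ has value $0$ in $M$, hence in $N$, producing $\bar b$ of approximately the same $\mathcal{L}_k$-type (including pairwise distances, since $d\in\mathcal{L}_1$). But the assembly fails as described: matching a net of $D_k^M$ into $N$ and a net of $D_k^N$ into $M$ \emph{independently} gives no control whatsoever on the cross distances between the two matched families. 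Concretely, if $D_k^M$ and $D_k^N$ are both a two-point space $\{p,q\}$ with $d(p,q)=1$, the first matching may send $p\mapsto p'$, $q\mapsto q'$ while the second sends $p'\mapsto q$, $q'\mapsto p$; the union of matched pairs then identifies $p'$ approximately with both $p$ and $q$, so any shortest-path bridge metric through these pairs forces $\widehat{d}(p',q')$ to be near $0$ while $d(p',q')=1$, violating the requirement that the bridge metric extend the given metrics (equivalently, the Kat\v{e}tov conditions). So the glued $\psi$ is not an approximate isometry at all, and the claimed checks cannot go through. What is missing is the mechanism that makes a \emph{one-directional} matching automatically approximately surjective: for instance, take the net $\bar a$ in $D_k^M$ to be a maximal $\delta$-separated set and use the reverse-direction type matching to compare packing numbers of $D_k^M$ and $D_k^N$, concluding (at continuity points of the packing function, which suffice) that the matched image $\bar b$ is itself approximately $\delta$-dense in $D_k^N$; alternatively run a genuine back-and-forth. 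Note that the paper dismisses its corresponding implication ((3)$\Rightarrow$(4)) with the words ``follows easily from the definition,'' so you are attempting to fill a hole the paper leaves open---but the filling you propose does not work as stated.
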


\begin{proof}
\noindent (1)$\implies $(2): This follows from an easy approximation
argument, using the uniform density of $\mathcal{F}_{n}$ in the space of
continuous functions from the interval $\left[ -n,n\right] $ to itself.

\noindent (2)$\implies $(3): Use the following standard trick. If $\varphi $
is an existential sentence then there is a large enough $N\in \mathbb{N}$
such that $N-\varphi $ is semantically equivalent to a universal sentence.
Moreover $\left( N-\varphi \right) ^{M}=N-\varphi ^{M}$ for every $\mathcal{L
}$-structure $M$.

\noindent (3)$\implies $(4):  This follows easily from the definition of the
Gromov--Hausdorff distance.

\noindent (4)$\implies $(5):  Fix countable subsets $M_{0}$ and $N_{0}$ of $M$
and $N$ such that $M_{0}\cap D_{k}(M)$ is dense in $D_{k}(M)$ and $N_{0}\cap
D_{k}(N)$ is dense in $D_{k}(N)$ for every $k\in \mathbb{N}$. Then for every 
$k\in \mathbb{N}$ there is a $\left( D_{k},\mathcal{L}_{k},2^{-k}\right) $
-approximate isomorphism $\psi $ from $M$ to $N$. This allows one to define
functions $f_{k}:M\rightarrow N$ such that $f_{k}\left[ D_{k}(M)\right]
\subset D_{k}(N)$ and for every $n$-ary relation $B$ in $\mathcal{L}_{k}$,
every $\bar{x}\in \left( M_{0}\cap D_{k}(M)\right) ^{n}$
\begin{equation*}
\psi ^{B}\left( \bar{x},f_{k}(\bar{x})\right) <2^{-k}
\end{equation*}
and for every $y\in N_{0}\cap D_{k}(M)$ there is $z\in M_{0}\cap D_{k}(M)$
such that
\begin{equation*}
d\left( f_{k}(z),y\right) <2^{-k}\text{.}
\end{equation*}
By compactness of $D_{k}(N)$, after passing to a subsequence we can assume
that the sequence $\left( f_{k}(x)\right) _{k\in \mathbb{N}}$ converges in $
N $ for every $x\in M_{0}$. Defining $f_{\infty }(x)$ to be the limit of the
sequence $\left( f_{k}(x)\right) _{k\in \mathbb{N}}$ defines an isometry $
f_{\infty }$ from $M_{0}$ onto a dense subset of $N_{0}$ that preserves all
the relations $\mathcal{L}$. The unique isometric extension of $f_{\infty }$
to the whole $M$ defines an isomorphism from $M$ onto $N$.

\noindent 5)$\implies $(6)$\implies $(1) These are clear by definition.
\qedhere
\end{proof}

Proposition \ref{Proposition: isomorphism and theory} can be regarded as the
metric analogue of the classical fact that a finite discrete structure is
classified by its first order theory.

\begin{corollary}
\label{Corollary: smooth isomorphism}The isomorphism relation for proper $
\mathcal{L}$-structures is smooth.
\end{corollary}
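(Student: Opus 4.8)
The plan is to exploit the complete invariant provided by Proposition \ref{Proposition: isomorphism and theory}, namely the collection of values of restricted universal sentences. First I would fix an enumeration $(\varphi_n)_{n\in\mathbb{N}}$ of the countably many restricted universal $\mathcal{L}$-sentences (their countability was already observed above), and define the map
\begin{equation*}
\Phi\colon \mathrm{Mod}_p(\mathcal{L})\to \mathbb{R}^{\mathbb{N}},\qquad \Phi(M)=\left(\varphi_n^M\right)_{n\in\mathbb{N}}.
\end{equation*}
By the equivalence of conditions (1) and (5) in Proposition \ref{Proposition: isomorphism and theory}, two proper structures $M$ and $N$ are isomorphic if and only if $\varphi_n^M=\varphi_n^N$ for every $n$, that is, if and only if $\Phi(M)=\Phi(N)$. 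Hence $\Phi$ is a reduction from the isomorphism relation on proper $\mathcal{L}$-structures to the relation of equality on the Polish space $\mathbb{R}^{\mathbb{N}}$, and equality on a Polish space is smooth by definition.

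It remains to verify that $\Phi$ is Borel, for which it suffices to show that for each fixed restricted universal sentence $\varphi$ the evaluation map $M\mapsto \varphi^M$ is Borel on $\mathrm{Mod}_p(\mathcal{L})$. I would argue this by induction on the structure of $\varphi$. Since we have reduced to a relational language, atomic formulas are of the form $B(x_{i_1},\ldots,x_{i_k})$, and their values at a code $f=(f_B)_B$ are read off directly from the coordinates $f_B$ and are therefore continuous, hence Borel, in the relevant coordinates. The restricted connectives drawn from $\bigcup_n\mathcal{F}_n$ are continuous functions, so they preserve Borel measurability under composition. The only delicate step is the universal quantifier $\sup_{x\in D}$, which a priori is an uncountable supremum over the interpretation $D^M$.

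The main obstacle is precisely this quantifier step. The resolution is that, since the interpretation of each domain $D_n$ is by construction the closure of the set indexed by $Q_n$, and since the interpretations of relation symbols extend uniformly continuously, the supremum over $D^M$ of a definable predicate coincides with the supremum of its values over the countable index set $Q_n$. A countable supremum of Borel functions is Borel, so the quantifier step preserves Borel measurability, and the induction goes through; thus $\Phi$ is Borel. I emphasize that properness itself enters not in this measurability argument but earlier, in guaranteeing via Proposition \ref{Proposition: isomorphism and theory} that the countable family $(\varphi_n^M)_n$ is a \emph{complete} invariant. With $\Phi$ a Borel reduction to equality on $\mathbb{R}^{\mathbb{N}}$, the corollary follows.
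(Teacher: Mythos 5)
Your proposal is correct and follows essentially the same route as the paper: the paper also fixes an enumeration $(\varphi_n)_{n\in\mathbb{N}}$ of the restricted universal sentences, maps $M\mapsto(\varphi_n^M)_{n\in\mathbb{N}}\in\mathbb{R}^{\mathbb{N}}$, invokes Proposition \ref{Proposition: isomorphism and theory} for completeness of the invariant, and establishes Borelness of evaluation by induction on formula complexity (which it leaves to the reader, citing the C*-algebra case in \cite{farah_descriptive_2012}). Your only addition is to spell out that induction, correctly handling the quantifier step by replacing the supremum over $D^M$ with the supremum over the countable dense set $Q_n$, and your observation that properness is needed only for completeness of the invariant, not for measurability, matches the paper's use of it.
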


\begin{proof}
It can be easily shown by induction on the complexity of a formula $\varphi
\left( x_{1},\ldots ,x_{n}\right) $ that the interpretation function 
\begin{equation*}
\left( M,a_{1},\ldots ,a_{n}\right) \mapsto \varphi ^{M}\left( a_{1},\ldots
,a_{n}\right)
\end{equation*}
is a Borel function from $\mathrm{Mod}\left( \mathcal{L}\right) \times 
\mathbb{N}^{n}$ to $\mathbb{R}$. This has been shown in the particular case
of C*-algebras in \cite[Proposition 5.1]{farah_descriptive_2012}. Fix an
enumeration $\left( \varphi _{n}\right) _{n\in \mathbb{N}}$ of all
restricted universal sentences. By Proposition \ref{Proposition: isomorphism
and theory} the function
\begin{equation*}
M\mapsto \left( \varphi _{n}^{M}\right) _{n\in \mathbb{N}}
\end{equation*}
from $\mathrm{Mod}_{p}\left( \mathcal{L}\right) $ to $\mathbb{R}^{\mathbb{N}
} $ is a Borel reduction from the relation of isomorphism of proper $
\mathcal{L}$-structures to equality of sequences of real numbers.
\end{proof}

An alternative way to obtain Corollary \ref{Corollary: smooth isomorphism}
is to define the complete separable metric space $X_{\mathcal{L}}$ of
isomorphism classes of proper $\mathcal{L}$-structures endowed with the
Gromov--Hausdorff metric $d_{GH}$. The equivalence of (4) and (5) in
Proposition \ref{Proposition: isomorphism and theory} shows that $\left( X_{
\mathcal{L}},d_{GH}\right) $ is indeed a metric space. After coding any
function $f$ by the relation expressing the distance from the graph of $f$,
we can assume that $\mathcal{L}$ contains only relation symbols. Thus
separability follows by considering the countable dense collection of finite 
$\mathcal{L}$-structures with rational-valued relations. Finally
completeness follows from the standard argument allowing to build the
Gromov--Hausdorff limit of a Cauchy sequence; see for example \cite[
Proposition 43]{petersen_riemannian_2006}. The proof is concluded observing
that the function that assigns to a code for an $\mathcal{L}$-structure $
M\in \mathrm{Mod}_{p}\left( \mathcal{L}\right) $ its isomorphism class $
\left[ M\right] \in X_{\mathcal{L}}$ is a Borel reduction from isomorphism
to equality.

In view of the Choi--Effros abstract characterization \cite[Theorem 13.1]
{paulsen_completely_2002}, operator systems can be described as structures
in a suitable language $\mathcal{L}_{OSy}$ in \cite[Appendix B]
{goldbring_kirchberg_2014}; see also \cite[Section 3.3]
{elliott_isomorphism_2013}. In this case the domains of quantifications are
norm balls of matrix amplifications. Therefore the following is an immediate
consequence of Proposition \ref{Proposition: isomorphism and theory}.

\begin{corollary}
Suppose that $X$ and $Y$ are finitely generated operator systems. Then $X$
and $Y$ are complete order isomorphic if and only if $\varphi ^{X}=\varphi
^{Y}$ for every universal $\mathcal{L}_{OSy}$-sentence. In particular the
complete order isomorphism for finitely generated operator systems is smooth.
\end{corollary}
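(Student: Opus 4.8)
The plan is to identify finitely generated operator systems with a special case of the proper $\mathcal{L}_{OSy}$-structures already treated in Proposition~\ref{Proposition: isomorphism and theory} and Corollary~\ref{Corollary: smooth isomorphism}, so that the present corollary becomes a direct specialization of those results. The crucial observation is that a \emph{finitely generated} operator system is automatically \emph{finite-dimensional}: the operator system generated by $x_{1},\ldots,x_{n}$ is the linear span of $\{1,x_{1},\ldots,x_{n},x_{1}^{\ast},\ldots,x_{n}^{\ast}\}$, which has dimension at most $2n+1$ and is therefore already closed.

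First I would check that every such $X$, regarded as an $\mathcal{L}_{OSy}$-structure, is proper in the sense of the preceding definition. As recalled above, the domains of quantification in $\mathcal{L}_{OSy}$ are the norm balls of the matrix amplifications $M_{k}(X)=M_{k}(\mathbb{C})\otimes X$. Since $X$ is finite-dimensional, each $M_{k}(X)$ is a finite-dimensional normed space, so its closed norm balls are compact; hence every domain of quantification has compact interpretation, which is exactly what it means for the structure to be proper. I would also invoke the fact, built into the description of $\mathcal{L}_{OSy}$ in \cite[Appendix B]{goldbring_kirchberg_2014}, that complete order isomorphism of operator systems is the same as isomorphism of the associated $\mathcal{L}_{OSy}$-structures.

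With these identifications in hand, the biconditional is immediate from the equivalence of conditions (2) and (5) of Proposition~\ref{Proposition: isomorphism and theory}: two proper $\mathcal{L}_{OSy}$-structures are isomorphic precisely when they agree on every universal sentence. Applied to $X$ and $Y$ this says that they are complete order isomorphic if and only if $\varphi^{X}=\varphi^{Y}$ for every universal $\mathcal{L}_{OSy}$-sentence $\varphi$. For smoothness I would restrict the Borel reduction $M\mapsto(\varphi_{n}^{M})_{n\in\mathbb{N}}$ of Corollary~\ref{Corollary: smooth isomorphism}, where $(\varphi_{n})$ enumerates the restricted universal sentences, to the finitely generated operator systems; transporting this along the weak equivalence of the model-theoretic parametrization with the operator-system parametrizations established in \ref{Subsubsection: models} and the appendices yields a Borel reduction from complete order isomorphism to equality of real sequences.

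The only step that genuinely requires attention is the properness check in the second paragraph---that finite generation forces finite dimension and hence compactness of all domains of quantification, which is precisely what places these structures within the scope of Proposition~\ref{Proposition: isomorphism and theory}. The remaining content is bookkeeping: the substantive work, namely the equivalence of agreement on universal sentences with isomorphism together with the Borel-measurability of sentence evaluation, has already been carried out, so no further analytic estimates are needed here.
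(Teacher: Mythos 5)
Your proposal is correct and follows essentially the same route as the paper: the paper likewise regards finitely generated operator systems as $\mathcal{L}_{OSy}$-structures whose domains of quantification are norm balls of matrix amplifications, and deduces the corollary as an immediate consequence of Proposition~\ref{Proposition: isomorphism and theory} and Corollary~\ref{Corollary: smooth isomorphism}. The only difference is that you spell out the properness check (finitely generated $\Rightarrow$ finite-dimensional $\Rightarrow$ compact norm balls) and the transport of the Borel reduction across the equivalent parametrizations, which the paper leaves implicit.
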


The same result equally applies to finitely generated operator spaces. These
can also be regarded as structures in a language $\mathcal{L}_{OSp}$ \cite[
Appendix B]{goldbring_kirchberg_2014}, using Ruan's abstract
characterization \cite[Theorem 13.4]{paulsen_completely_2002}.

\begin{corollary}
Suppose that $X$ and $Y$ are finitely generated operator spaces. Then $X$
and $Y$ are completely isometric if and only if $\varphi ^{X}=\varphi ^{Y}$
for every universal $\mathcal{L}_{OSy}$-sentence. In particular the complete
isometry relation for finitely generated operator spaces is smooth.
\end{corollary}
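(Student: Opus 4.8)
The plan is to obtain this as a direct instance of Proposition \ref{Proposition: isomorphism and theory}, exactly paralleling the preceding corollary for operator systems; the only substantive point is to verify that finitely generated operator spaces, viewed as $\mathcal{L}_{OSp}$-structures, are \emph{proper}. First I would recall that, by Ruan's abstract characterization \cite[Theorem 13.4]{paulsen_completely_2002}, operator spaces are axiomatizable as structures in a language $\mathcal{L}_{OSp}$ \cite[Appendix B]{goldbring_kirchberg_2014}, in which the domains of quantification are the norm balls of the matrix amplifications $M_k(X)$, and in which isomorphism of $\mathcal{L}_{OSp}$-structures is precisely complete isometry of operator spaces.

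The key observation is that a finitely generated operator space is finite dimensional: if $X$ is generated by $x_1,\ldots,x_n$, then already $X=\mathrm{span}\{x_1,\ldots,x_n\}$, which is finite dimensional and hence closed. Consequently each amplification $M_k(X)$ is finite dimensional, so the interpretations of the domains of quantification---namely its norm balls---are compact. Thus $X$, regarded as an $\mathcal{L}_{OSp}$-structure, is proper in the sense of the definition preceding Proposition \ref{Proposition: isomorphism and theory}.

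With this in hand the conclusion is immediate. Applying the equivalence of (2) and (5) in Proposition \ref{Proposition: isomorphism and theory} to $X$ and $Y$ as proper $\mathcal{L}_{OSp}$-structures yields that $X$ and $Y$ are completely isometric if and only if $\varphi^X=\varphi^Y$ for every universal $\mathcal{L}_{OSp}$-sentence $\varphi$. Smoothness then follows at once from Corollary \ref{Corollary: smooth isomorphism}, once one notes that the codes for finitely generated operator spaces form a Borel subset of $\mathrm{Mod}_p(\mathcal{L}_{OSp})$, so that the restriction to them of the reduction $M\mapsto(\varphi_n^M)_n$ over restricted universal sentences remains a Borel reduction to equality of real sequences.

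Since the argument is essentially a transcription of the operator-system case, I do not anticipate any genuine obstacle. The one point deserving care---rather than a real difficulty---is confirming that the $\mathcal{L}_{OSp}$-axiomatization indeed has the matrix-amplification balls as its domains of quantification and that $\mathcal{L}_{OSp}$-isomorphism coincides with complete isometry; both facts are furnished by \cite[Appendix B]{goldbring_kirchberg_2014}, and the finite-dimensionality (hence compactness) observation above is what makes the properness hypothesis of the proposition applicable.
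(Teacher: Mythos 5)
Your proposal is correct and follows essentially the same route as the paper: the paper also obtains this corollary as an immediate consequence of Proposition \ref{Proposition: isomorphism and theory} (together with Corollary \ref{Corollary: smooth isomorphism}), by viewing operator spaces as $\mathcal{L}_{OSp}$-structures via Ruan's characterization, whose domains of quantification are the norm balls of the matrix amplifications and which are compact in the finitely generated (hence finite-dimensional) case. Your explicit verification of properness and of the Borelness of the relevant set of codes only makes the argument more complete; note also that the statement's reference to $\mathcal{L}_{OSy}$-sentences is a typo in the paper for $\mathcal{L}_{OSp}$, as your proof correctly uses.
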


Finally one can consider finitely generated $\emph{unital}$ operator spaces.
The abstract characterization provided by \cite[Theorem 1.1]
{blecher_metric_2011} shows that unital operator spaces can be regarded as $
\mathcal{L}_{uOSp}$-structures in a suitable language $\mathcal{L}_{uOSp}$.

\begin{corollary}
Suppose that $X$ and $Y$ are finitely generated operator spaces. Then $X$
and $Y$ are unitally completely isometric if and only if $\varphi
^{X}=\varphi ^{Y}$ for every universal $\mathcal{L}_{uOSy}$-sentence. In
particular the unital complete isometry relation for finitely generated
unital operator spaces is smooth.
\end{corollary}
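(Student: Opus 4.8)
The plan is to imitate the two preceding corollaries, which reduce the smoothness of complete order isomorphism (resp.\ complete isometry) of finitely generated operator systems (resp.\ operator spaces) to Corollary~\ref{Corollary: smooth isomorphism} by realizing the relevant objects as proper $\mathcal{L}$-structures. First I would recall that, by the abstract characterization of unital operator spaces due to Blecher and Neal \cite[Theorem 1.1]{blecher_metric_2011}, every unital operator space is determined up to unital complete isometry by its system of matrix norms together with the distinguished unit. As indicated in \cite[Appendix B]{goldbring_kirchberg_2014}, this data can be encoded in a countable language $\mathcal{L}_{uOSp}$ whose domains of quantification are the norm balls of the matrix amplifications $M_n(X)$ and which includes a constant symbol interpreted as the unit; the axioms of the associated theory are exactly the conditions of the Blecher--Neal characterization. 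Under this encoding two unital operator spaces are isomorphic as $\mathcal{L}_{uOSp}$-structures precisely when they are unitally completely isometric.

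Next I would verify that a finitely generated---hence finite-dimensional---unital operator space $X$ gives rise to a \emph{proper} $\mathcal{L}_{uOSp}$-structure. This is the only point where finite generation enters: when $X$ is finite-dimensional, each matrix amplification $M_n(X)$ is a finite-dimensional normed space, so its norm balls are compact, and therefore every domain of quantification has compact interpretation. Thus finitely generated unital operator spaces correspond to proper $\mathcal{L}_{uOSp}$-structures, to which Proposition~\ref{Proposition: isomorphism and theory} and Corollary~\ref{Corollary: smooth isomorphism} apply verbatim. The equivalence of items (2) and (5) of Proposition~\ref{Proposition: isomorphism and theory} yields that $X$ and $Y$ are unitally completely isometric if and only if $\varphi^X=\varphi^Y$ for every universal $\mathcal{L}_{uOSp}$-sentence, and Corollary~\ref{Corollary: smooth isomorphism} yields that the resulting relation is smooth.

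The routine part of the argument is the compactness of the domains, which is immediate from finite-dimensionality. The only genuine point to check---and the step I would expect to require the most care---is that the Blecher--Neal characterization can indeed be axiomatized in continuous logic with the prescribed domains of quantification, so that isomorphism of $\mathcal{L}_{uOSp}$-structures coincides with unital complete isometry; this is the unital analogue of the passage already carried out for operator systems and (non-unital) operator spaces, and amounts to transcribing the conditions of \cite[Theorem 1.1]{blecher_metric_2011} as uniformly continuous relations and axioms in the sense of \cite[Appendix B]{goldbring_kirchberg_2014}. Once this is in place, the conclusion follows exactly as in the preceding two corollaries.
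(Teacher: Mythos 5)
Your proposal is correct and follows essentially the same route as the paper, which treats this corollary as an immediate consequence of Proposition~\ref{Proposition: isomorphism and theory} and Corollary~\ref{Corollary: smooth isomorphism} once unital operator spaces are regarded, via the Blecher--Neal characterization \cite[Theorem 1.1]{blecher_metric_2011} and \cite[Appendix B]{goldbring_kirchberg_2014}, as structures in a language $\mathcal{L}_{uOSp}$ whose domains of quantification (norm balls of matrix amplifications) are compact in the finitely generated, hence finite-dimensional, case. Your write-up in fact makes explicit the properness verification the paper leaves implicit, and correctly uses $\mathcal{L}_{uOSp}$ where the paper's statement has the typo $\mathcal{L}_{uOSy}$.
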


\appendix

\section{Equivalence of parametrizations of operator systems\label
{Appendix:equivalenceOSy}}

In this Appendix we show that the parametrizations of operator systems $
\Gamma $, $\Xi $, and $\widehat{\Xi }$ are equivalent. Furthermore we show
that $\Gamma _{N}$ and $\widehat{\Gamma }_{N}$ provide weakly equivalent
parametrizations of $N$-dimensional operator systems.

The argument of the following lemma is analogous to the one of \cite[Lemma
2.4]{farah_turbulence_2014}. The full proof is presented for the convenience
of the reader.

\begin{lemma}
\label{Lemma: Borel injection}Suppose that $X$ is a standard Borel space,
and $Y$ is any of the spaces $\Gamma $, $\Xi $ or $\widehat{\Xi }$. Let $f$
be a Borel function from $X$ to $Y$. Then there is a Borel injection $
\widetilde{f}$ from $X$ to $Y$ such that $\mathcal{OS}y(\widetilde{f}
(x))\cong \mathcal{OS}y\left( f(x)\right) $ for every $x\in X$.
\end{lemma}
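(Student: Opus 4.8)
The plan is to reduce the problem, for each of the three target spaces $Y$, to producing a single Borel ``tagging'' operation that attaches a recoverable parameter to a code without altering the isomorphism class of the coded operator system. Fix a Borel injection $\iota\colon X\to 2^{\mathbb{N}}$, which exists because $X$ is standard Borel. It then suffices to construct a Borel map $S\colon Y\times 2^{\mathbb{N}}\to Y$ together with a Borel ``read-off'' map $\rho\colon Y\to 2^{\mathbb{N}}$ such that $\mathcal{OS}y(S(y,r))\cong\mathcal{OS}y(y)$ and $\rho(S(y,r))=r$ for all $y\in Y$ and $r\in 2^{\mathbb{N}}$. Granting this, I would set $\widetilde f(x)=S(f(x),\iota(x))$: this is Borel, satisfies $\mathcal{OS}y(\widetilde f(x))\cong\mathcal{OS}y(f(x))$, and if $\widetilde f(x)=\widetilde f(x')$ then applying $\rho$ gives $\iota(x)=\iota(x')$, hence $x=x'$. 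Thus $\widetilde f$ is the desired Borel injection, and the entire content is the construction of $S$ and $\rho$ in each parametrization.

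For $Y=\Gamma=B(H)^{\mathbb{N}}$ the tag is carried by inserted scalar operators: given $\gamma=(\gamma_{k})_{k}$ and $r=(r_{k})_{k}\in 2^{\mathbb{N}}$, I would let $S(\gamma,r)$ be the interleaved sequence whose $(2k-1)$-st entry is $\gamma_{k}$ and whose $(2k)$-th entry is $r_{k}I$. Since every operator system already contains all scalar multiples of the unit, inserting the operators $r_{k}I$ does not enlarge the closed span, so $\mathcal{OS}y(S(\gamma,r))=\mathcal{OS}y(\gamma)$ on the nose; and $\rho$ simply reads $r_{k}$ off the even coordinate $r_{k}I$. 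The same idea transports to $Y=\Xi$ at the level of norms. If $\delta$ is realized by an operator system $X$ and a dense sequence $\gamma$, then inserting the scalars $r_{k}1_{X}$ into $\gamma$ produces a new dense sequence of the \emph{same} operator system, and the resulting matrix norms are expressible directly from $\delta$: letting $\theta_{r}$ be the substitution on $\ast$-polynomials sending $X_{2k-1}\mapsto X_{k}$ and $X_{2k}\mapsto r_{k}$, one sets $S(\delta,r)_{n}([p_{ij}])=\delta_{n}([\theta_{r}p_{ij}])$. This lies in $\Xi$ because it is realized by $(X,\widetilde\gamma)$, giving $\mathcal{OS}y(S(\delta,r))\cong\mathcal{OS}y(\delta)$, and the tag is recovered by $\rho(\delta')=(\delta'_{1}(X_{2k}))_{k}$, since $\delta'_{1}(X_{2k})=\|r_{k}1_{X}\|=r_{k}$.

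The genuinely delicate case, which I expect to be the main obstacle, is $Y=\widehat\Xi$: here a code is an \emph{abstract} countable $\mathbb{Q}(i)$-$\ast$-vector space on $\mathbb{N}$, with no free coordinates into which a tag may be dropped. The remedy is to hide the tag inside the enumeration of a canonically identifiable subset. Given $\xi$ with order unit $e_{\xi}$, the integer multiples $k\,e_{\xi}$ are pairwise distinct elements (as $e_{\xi}\neq 0$), with Borel-computable labels $a_{k}=g_{\xi}(k,e_{\xi})$; the set $\{a_{k}\}$ and its complement are both infinite, since other elements (such as $i\,e_{\xi}$ and the generators) carry other labels. I would then build a bijection $\pi$ of $\mathbb{N}$, depending Borel-ly on $(\xi,r)$, that sends $a_{k}\mapsto 2k+r_{k}$ and extends by the canonical order-preserving bijection between the two infinite co-infinite complements, and set $S(\xi,r)=\pi_{\ast}\xi$, the relabelled structure. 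Relabelling is an isomorphism of matrix-ordered $\mathbb{Q}(i)$-$\ast$-vector spaces, so the completions satisfy $\mathcal{OS}y(S(\xi,r))\cong\mathcal{OS}y(\xi)$, and the tag is read back by $\rho(\xi')=(g_{\xi'}(k,e_{\xi'})\bmod 2)_{k}$, which returns $(2k+r_{k})\bmod 2=r_{k}$ by the transformation rule $g_{\xi'}(k,e_{\xi'})=\pi(g_{\xi}(k,e_{\xi}))=\pi(a_{k})$. The points demanding care are exactly that $\{a_{k}\}$ is genuinely identifiable from the coded scalar multiplication, that the bijection $\pi$ can be chosen canonically and Borel-ly in $(\xi,r)$, and that $\pi_{\ast}\xi$ remains a valid code in $\widehat\Xi$; all three follow directly from the definitions recorded above.
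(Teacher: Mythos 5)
Your proposal is correct and follows essentially the same strategy as the paper's proof: tag each code with a recoverable parameter without altering the coded operator system, by inserting scalar multiples of the identity into the sequence for $\Gamma$, and by relabelling via a permutation of $\mathbb{N}$ (the paper's $S_{\infty}$-action) so that the labels of canonical scalar multiples of the order unit encode the tag for $\widehat{\Xi}$. The only divergence is the $\Xi$ case, where the paper rescales the first generator of nonzero norm so that its norm records a positive real parameter, while you precompose the norm data with a polynomial substitution simulating the insertion of the scalars $r_{k}1$ into the dense sequence; both are instances of the same tagging idea, so the proofs are essentially the same.
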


\begin{proof}
Consider the case where $Y=\Gamma $. Without loss of generality we can assume
that $X$ is the standard Borel space of infinite subsets of $\mathbb{N}$
regarded as a subset of $2^{\mathbb{N}}$. Denote by $I$ the identity
operator on $H$. Define the function $\widetilde{f}:X\rightarrow \Gamma $ by
setting
\begin{equation*}
\widetilde{f}(A)_{k}=\left\{ 
\begin{array}{ll}
nI & \text{if }k=2^{n}\text{ for some }n\in A\text{,} \\ 
f(A)_{n} & \text{if }k=3^{n}\text{ for some }n\in \mathbb{N}\text{,} \\ 
0 & \text{otherwise.}
\end{array}
\right.
\end{equation*}
Then the function $\widetilde{f}$ is a Borel injection and $\mathcal{OS}y(
\widetilde{f}(A))\cong \mathcal{OS}y\left( f(A)\right) $ for every $A\in X$.

Consider now the case when $Y=\widehat{\Xi }$. Without loss of generality we
can assume that $X$ is the standard Borel space of infinite subsets of the
set of \emph{even} natural numbers. Observe that the group $S_{\infty }$ of
permutations of $\mathbb{N}$ has a natural action on $\widehat{\Xi }$.
Explicitly if $\left( f,g,h,(C_{\xi })_{n\in \mathbb{N}},e_{\xi }\right) \in 
\widehat{\Xi }$ and $\sigma \in S_{\infty }$ then $\sigma \cdot \xi $ is the
element of $\widehat{\Xi }$ obtained replacing $f$ with the function $\left(
n,m\right) \mapsto \sigma \left( f\left( \sigma ^{-1}\left( n\right) ,\sigma
^{-1}\left( m\right) \right) \right) $ and similarly with the other entries
of $\xi $. It is clear that $\mathcal{OS}y(\xi )\cong \mathcal{OS}y\left(
\sigma \cdot \xi \right) $ for any $\xi \in \widehat{\Xi }$ and $\sigma \in
S_{\infty }$. Given $\xi \in \widehat{\Xi }$ and $A\in X$ one can find in a
Borel way a permutation $\sigma _{\xi ,A}$ of $\mathbb{N}$ such that
\begin{equation*}
A=\left\{ 2^{n}\cdot _{\sigma \cdot \xi }1:n\in \mathbb{N}\right\} \text{.}
\end{equation*}
We can then define the Borel injection $\widetilde{f}:X\rightarrow \widehat{
\Xi }$ by
\begin{equation*}
\widetilde{f}(A)=\sigma _{f(A),A}\cdot f(A)\text{.}
\end{equation*}

Finally suppose that $Y=\Xi $. We can assume without loss of generality that 
$X$ is the space of positive real numbers. Fix $x\in X$ and define $\delta
=f(x)$. Consider the least $n_{0}\in \mathbb{N}$ such that $\delta
_{1}\left( X_{n_{0}}\right) \neq 0$. Let $X$ be an operator system and $
\gamma $ be a dense sequence in $X$ such that, for every $\left[ q_{ij}
\right] \in M_{n}(\mathcal{V})$, 
\begin{equation*}
\delta _{n}\left( \left[ q_{ij}\right] \right) =\left\Vert \left[
q_{ij}(\gamma )\right] \right\Vert _{M_{n}(A)}\text{.}
\end{equation*}
Let $\widetilde{\gamma }$ to be the sequence in $X$ defined by
\begin{equation*}
\widetilde{\gamma }_{n}=\left\{ 
\begin{array}{ll}
\frac{x}{\left\Vert \gamma _{i}\right\Vert _{A}}\gamma _{i} & \text{if }
i=n_{0}\text{,} \\ 
\gamma _{i} & \text{otherwise.}
\end{array}
\right.
\end{equation*}
Define 
\begin{equation*}
\widetilde{f}(x)_{n}\left( \left[ q_{ij}\right] \right) =\left\Vert \left[
q_{ij}(\widetilde{\gamma })\right] \right\Vert _{M_{n}(A)}\text{.}
\end{equation*}
Observe that $\widetilde{f}$ is well defined and injective. Note also that $
\widetilde{f}(x)_{n}=\delta ^{\prime }$ if and only if there are $\gamma
,\gamma ^{\prime }\in \Gamma $ and $\delta \in \Xi $ such that $\left\Vert
p(\gamma )\right\Vert =\delta \left( p\right) $ and for every $p\in \mathcal{
V}$, $n\in \mathbb{N}$, and $q_{ij}\in M_{n}(\mathcal{V})$, and $\delta
^{\prime }([q_{ij}^{\prime }])=\delta \left( \left[ q_{ij}\right] \right) $.
Here $q_{ij}^{\prime }$ is the polynomial obtained from $q_{ij}$ by
replacing any occurrence of $X_{n_{0}}$ with $\left( \delta
_{1}(X_{n_{0}})/x\right) X_{n_{0}}$, while $n_{0}$ is defined as above to be
the least natural number such that $\delta _{1}\left( X_{n_{0}}\right) $ is
nonzero. This observation gives an analytic definition for the function $
\widetilde{f}$. The classical principle that a function with analytic graph
is Borel \cite[Theorem 14.12]{kechris_classical_1995} concludes the proof.
\end{proof}

\begin{proposition}
\label{Proposition: equipar osy}The parametrizations of operator systems $
\Gamma $, $\Xi $, and $\widehat{\Xi }$ are equivalent.
\end{proposition}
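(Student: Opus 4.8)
The plan is to deduce the Proposition from three facts: that the relation of equivalence of parametrizations is transitive, so that it suffices to prove $\Gamma\sim\Xi$ and $\Xi\sim\widehat{\Xi}$; that Lemma~\ref{Lemma: Borel injection} allows one to replace any Borel map into $\Gamma$, $\Xi$ or $\widehat{\Xi}$ by a Borel \emph{injection} coding the same operator systems up to complete order isomorphism; and that equivalence is the same as the existence of a coding-preserving Borel isomorphism by the Borel Schr\"{o}der--Bernstein theorem, as recorded after the definition of equivalence in Section~\ref{Section:parametrizing}. Combining the last two, in order to prove that two of these parametrizations are equivalent it is enough to exhibit Borel maps in both directions that carry a code to a code for a completely order isomorphic operator system; Lemma~\ref{Lemma: Borel injection} then makes both maps injective without disturbing the coded systems, and Schr\"{o}der--Bernstein produces the desired Borel isomorphism.

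Thus the work reduces to constructing the required Borel maps, and three of the four directions are routine. The map $\Gamma\to\Xi$ sends $\gamma$ to the matrix-norm datum $\delta_n([p_{ij}])=\|[p_{ij}(\gamma)]\|$; this is Borel since the operator norm is Borel for the inductive-limit Borel structure on $B(H)$, and it preserves the coded system because $\mathcal{OS}y(\delta)$ is by definition the completion of $\mathcal{V}$ for this very seminorm. The map $\widehat{\Xi}\to\Xi$ reads the matrix norms off the cones through the norm formula recalled in Section~\ref{Section:parametrizing}, and is plainly Borel and coding-preserving. Conversely, $\Xi\to\widehat{\Xi}$ reconstructs the matrix-ordered $\mathbb{Q}(i)$-$\ast$-vector space carried by $\mathcal{V}$: the Archimedean matrix order unit is the class of $\mathfrak{p}_1=1$, while the cone $C_n$ is recovered from the norm and the unit via the standard order--norm relationship---a self-adjoint $a$ lies in $C_n$ exactly when $\|r(I_n\otimes e)-a\|\le r$ for all rationals $r\ge\|a\|$---which is a Borel condition on $\delta$. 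That these maps preserve the operator system follows from the Choi--Effros characterization.

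The one substantive direction, and the main obstacle, is $\Xi\to\Gamma$, which must realize an \emph{abstractly} presented operator system as an actual sequence of operators on the fixed Hilbert space $H$. The plan is to represent $\mathcal{OS}y(\delta)$ faithfully through a separating family of finite-dimensional matrix states. For each $k$ the collection of unital completely positive maps $\mathcal{OS}y(\delta)\to M_k(\mathbb{C})$ is compact and cut out by conditions Borel in $\delta$; by the Kuratowski--Ryll-Nardzewski theorem \cite[Theorem 12.13]{kechris_classical_1995} one selects, Borel-measurably in $\delta$, a countable family of such maps that is jointly completely order faithful, a weak$^\ast$-dense family of matrix states detecting the matrix order by Choi--Effros duality. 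Their direct sum is then a complete order embedding of $\mathcal{OS}y(\delta)$ into $B\left(\bigoplus_j\mathbb{C}^{k_j}\right)\subseteq B(H)$, and evaluating it on the generators $\mathfrak{p}_n$ yields a Borel code $\gamma\in\Gamma$ with $\mathcal{OS}y(\gamma)\cong\mathcal{OS}y(\delta)$.

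Finally I would assemble the pieces: applying Lemma~\ref{Lemma: Borel injection} to the four maps above turns them into Borel injections coding the same systems, which gives $\Gamma\sim\Xi$ and $\Xi\sim\widehat{\Xi}$ straight from the definition, whence $\Gamma\sim\widehat{\Xi}$ by transitivity, and Borel Schr\"{o}der--Bernstein supplies the equivalent reformulation by Borel isomorphisms. I expect the measurability of the matrix-state selection, together with the verification that a \emph{Borel-chosen countable} family of matrix states is genuinely completely order faithful, to demand the most care; everything else is bookkeeping with norms, cones, and the Choi--Effros axioms.
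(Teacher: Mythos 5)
Your proposal follows essentially the same route as the paper's proof: the reduction to weak equivalence via Lemma~\ref{Lemma: Borel injection} and the Borel Schr\"{o}der--Bernstein theorem, routine norm/cone translations for the easy directions, and---for the one substantive direction $\Xi \to \Gamma$---a Kuratowski--Ryll-Nardzewski (compact-section) Borel selection of countably many finite-dimensional matrix states, justified by the Choi--Effros characterization, whose direct sum gives a unital complete isometry realizing $\mathcal{OS}y(\delta)$ on $H$. The paper organizes the easy maps as a cycle $\Gamma \to \widehat{\Xi} \to \Xi \to \Gamma$ rather than constructing both directions between $\Xi$ and $\widehat{\Xi}$, and it indexes the selected matrix states by pairs $(\varepsilon,[p_{ij}])$ so that each matrix norm is attained up to $\varepsilon$ (norm-faithfulness rather than your order-faithfulness, which is equivalent for unital maps); these are only bookkeeping differences.
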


\begin{proof}
In view of Lemma \ref{Lemma: Borel injection} it is enough to show that the
parametrizations $\Gamma $, $\Xi $, and $\widehat{\Xi }$ are weakly
equivalent. Suppose that $\gamma \in \Gamma $. Define the matrix ordered $
\mathbb{Q}(i)$-$\ast $-vector space structure $\xi _{\gamma }$ on $\mathbb{N}
$
\begin{align*}
n+_{\xi _{\gamma }}m=k& \iff \mathfrak{p}_{n}(\gamma )+\mathfrak{p}
_{m}(\gamma )=\mathfrak{p}_{k}(\gamma )\text{,} \\
q\cdot _{\xi _{\gamma }}n=k& \iff q\mathfrak{p}_{n}(\gamma )=\mathfrak{p}
_{k}(\gamma )\text{,} \\
n^{\ast _{\xi _{\gamma }}}=k& \iff \mathfrak{p}_{n}(\gamma )^{\ast }=
\mathfrak{p}_{k}(\gamma )\text{.}
\end{align*}
The positive cones are defined by
\begin{equation*}
\left[ m_{ij}\right] \in C_{\xi ,n}\iff \lbrack \mathfrak{p}_{m_{ij}}(\gamma
)]\geq 0\text{.}
\end{equation*}
Finally $e_{\xi _{\gamma }}=1$ is an Archimedean matrix order unit. This
defines a Borel function $\gamma \mapsto \xi _{\gamma }$ from $\Gamma $ to $
\widehat{\Xi }$ such that $\mathcal{OS}y(\gamma )\cong \mathcal{OS}y\left(
\xi _{\gamma }\right) $.

Now suppose that $\xi \in \widehat{\Xi }$. If $p\in \mathcal{V}$ define $
p^{\xi }$ to be the element of $\mathbb{N}$ obtained by evaluating $p$ in
the $\mathbb{Q}(i)$-$\ast $-vector space structure on $\mathbb{N}$ defined
by $\xi $ after by replacing $X_{i}$ with $i$ and replacing the constant $c$
by $c\cdot _{\xi }e_{\xi }$. Denote by $rI_{n}^{\xi }$ the $n\times n$
matrix with positive integer coefficients having $r\cdot _{\xi }e_{\xi }$ in
the diagonal entries and $0\cdot _{\xi }e_{\xi }$ elsewhere. Define $\delta
_{\xi }\in \Xi $ by setting, for $P\in M_{n}(\mathcal{V})$ and $r\in \mathbb{
Q}_{+}$,
\begin{equation*}
\delta _{\xi ,n}\left( P\right) <r\iff 
\begin{bmatrix}
rI_{n}^{\xi } & P \\ 
P^{\ast } & rI_{n}^{\xi }
\end{bmatrix}
\in C_{\xi ,2n}\text{.}
\end{equation*}
By \cite[Proposition 13.3]{paulsen_completely_2002} this defines a Borel map 
$\xi \mapsto \delta _{\xi }$ from $\widehat{\Xi }$ to $\Xi $ such that $
\mathcal{OS}y(\xi )\cong \mathcal{OS}y\left( \delta _{\xi }\right) $.

To conclude the proof it is now enough to describe a Borel function $\delta
\mapsto \gamma _{\delta }$ from $\Xi $ to $\Gamma $ such that $\mathcal{OS}
y(\gamma _{\delta })\cong \mathcal{OS}y(\delta )$. For $\delta \in \Xi $ and 
$k\in \mathbb{N}$ define $P_{k}(\delta )$ the set of $\phi \in M_{k}(\mathbb{
C})^{\mathcal{V}}$ such that

\begin{itemize}
\item $\left\Vert \left[ \phi \left( p_{ij}\right) \right] \right\Vert
_{M_{kn}(\mathbb{C})}\leq \delta _{n}\left( \left[ p_{ij}\right] \right) $
for every $n\in \mathbb{N}$ and $\left[ p_{ij}\right] \in M_{n}(\mathcal{V})$
, and

\item $\phi \left( \mathfrak{p}_{1}\right) $ is the identity matrix.
\end{itemize}

(Recall that $\mathfrak{p}_{1}$ is the constant polynomial $1$.) Then $
P_{k}(\delta )$ is a compact subset of $M_{k}(\mathbb{C})^{\mathcal{V}}$
with the product topology. Moreover the relation
\begin{equation*}
\left\{ \left( \delta ,P_{k}(\delta )\right) \in \Xi \times M_{k}(\mathbb{C}
)^{\mathcal{V}}:\phi \in P_{k}(\delta )\right\}
\end{equation*}
is Borel. It follows from \cite[Theorem 28.8]{kechris_classical_1995} that
the function
\begin{equation*}
\delta \mapsto P_{k}(\delta )
\end{equation*}
is a Borel function into the Polish space $K(M_{k}(\mathbb{C})^{\mathcal{V}
}) $ of compact subsets of $M_{k}(\mathbb{C})^{\mathcal{V}}$. Consider the
Borel set $A_{n}$ of tuples $\left( \delta ,\varepsilon ,\left[ m_{ij}\right]
,k,\phi \right) $ where

\begin{itemize}
\item $\delta \in \Xi $,

\item $\varepsilon \in \mathbb{Q}_{+}$,

\item $\left[ p_{ij}\right] \in M_{n}(\mathcal{V})$,

\item $k\in \left\{ 1,2,\ldots ,n\right\} $, and

\item $\phi \in P_{k}(\delta )$ is such that $\left\Vert \left[ \phi \left(
p_{ij}\right) \right] \right\Vert _{kn}\geq \delta _{n}\left( \left[ p_{ij}
\right] \right) -\varepsilon $.
\end{itemize}

The proof of the Choi-Effros abstract characterization of operator
systems  \cite[page 179]{paulsen_completely_2002} shows
that for every $\left( \delta ,\varepsilon ,\left[ p_{ij}\right] \right) \in
\Xi \times \mathbb{Q}_{+}\times M_{n}(\mathcal{V})$ the corresponding
section of $A_{n}$ is (compact and) nonempty. Therefore by \cite[Theorem 28.8
]{kechris_classical_1995} there is a Borel function 
\begin{equation*}
\left( \delta ,\varepsilon ,\left[ p_{ij}\right] \right) \mapsto \phi
_{\delta ,\varepsilon ,[p_{ij}]}\in P_{k_{\delta ,\varepsilon
,[p_{ij}]}}(\delta )
\end{equation*}
such that $\left\Vert \left[ \phi \left( p_{ij}\right) \right] \right\Vert
_{k_{\delta ,\varepsilon ,[p_{ij}]}n}\geq \delta _{n}\left( \left[ p_{ij}
\right] \right) -\varepsilon $. Denote by $\mathcal{M}$ the set of $n\times
n $ matrices $\left[ p_{ij}\right] $ with entries in $\mathcal{V}$ where $n$
varies in $\mathbb{N}$. Denote by $H$ the separable Hilbert space with
orthonormal basis $\left( e_{[p_{ij}],\varepsilon ,\alpha }\right) $ indexed
by $\mathcal{M}\times \mathbb{Q}_{+}\times \mathbb{N}$. For every $k\in 
\mathbb{N}$ denote by $(b_{n,\alpha })_{\alpha \leq n}$ the canonical basis
of $\mathbb{C}^{k}$. For $\delta \in \widehat{\Xi }$ and $n\in \mathbb{N}$
denote by $\gamma _{\delta ,n}$ the element of $B(H)$ defined by setting 
\begin{equation*}
\left\langle \gamma _{\delta ,n}e_{[p_{ij}],\varepsilon ,\alpha
},e_{[q_{ij}],\varepsilon ,\beta }\right\rangle =\left\langle \phi _{\delta
,\varepsilon ,[p_{ij}]}\left( X_{n}\right) e_{k_{\delta ,\varepsilon
,[p_{ij}]},\alpha },b_{k_{\delta ,\varepsilon ,[p_{ij}]},\beta }\right\rangle
\end{equation*}
if $p_{ij}=q_{ij}$ and $\alpha ,\beta \leq k_{\delta ,\varepsilon ,[p_{ij}]}$
, and zero otherwise. The construction ensures that the map $p\mapsto
p(\gamma )$ extends to a complete isometry from the operator system $
\mathcal{OS}y(\delta )$ coded by $\delta $ and the operator system $\mathcal{
OS}y(\gamma _{\delta })$ coded by the sequence $\gamma _{\delta }=\left(
\gamma _{\delta ,n}\right) _{n\in \mathbb{N}}$. Observing that the function $
\delta \mapsto \gamma _{\delta }$ is Borel concludes the proof.
\end{proof}

We will now verify that the sets $\Gamma _{N}$ and $\widehat{\Gamma }_{N}$ provide
weakly equivalent parametrizations of the $N$-dimensional operator systems.
Recall that $\Gamma _{N}$ is the set of $\gamma \in \Gamma $ such that $
\mathcal{OS}(\gamma )$ has dimension $N$. Similarly $\widehat{\Gamma }_{N}$
is the set of linearly independent tuples $\left( x_{1},\ldots ,x_{N}\right) 
$ in $B\left( H\right) $ such that $\mathrm{span}\left\{ x_{1},\ldots
,x_{N}\right\} $ is an operator system. Denote as before by $\mathcal{V}_{
\mathbb{C}}$ the complex $\ast $-vector space of noncommutative $\ast $
-polynomials with coefficients from $\mathbb{C}$. Endow $\mathcal{V}_{
\mathbb{C}}$ with the norm
\begin{equation*}
\left\Vert \sum_{i\leq n}a_{i}X_{i}+\sum_{i\leq n}b_{i}X_{i}^{\ast
}+c\right\Vert =\sum_{i\leq n}\left\vert a_{i}\right\vert +\sum_{i\leq
n}\left\vert b_{i}\right\vert +\left\vert c\right\vert \text{.}
\end{equation*}
Let us show that the set $\Gamma _{\leq N}$ of $\gamma \in \Gamma $ such
that $\mathcal{OS}y(\gamma )$ has dimension at most $N$ is Borel. To this
purpose it is enough to show that it is both analytic and coanalytic \cite[
Theorem 14.1]{kechris_classical_1995}. Observe that on one hand $\gamma \in
\Gamma _{\leq N}$ if and only if there are $p_{1},\ldots ,p_{N}\in \mathcal{V
}_{\mathbb{C}}$ such that for every $n\in \mathbb{N}$ and $q\in \mathcal{V}$
there is $r\in \mathcal{V}$ such that
\begin{equation*}
\left\Vert r\left( p_{1}(\gamma ),\ldots ,p_{N}(\gamma )\right) -q(\gamma
)\right\Vert <\frac{1}{n}\text{.}
\end{equation*}
On the other hand $\gamma \in \Gamma _{\leq N}$ if and only if for every $
p_{1},\ldots ,p_{N+1}\in \mathcal{V}_{\mathbb{C}}$ there are $\lambda
_{1},\ldots ,\lambda _{N+1}\in \mathbb{Q}(i)$ such that $\lambda _{i}\neq 0$
for some $i\in \left\{ 1,2,\ldots ,N+1\right\} $ and
\begin{equation*}
\lambda _{1}p_{1}+\cdots +\lambda _{N+1}p_{N+1}=0\text{.}
\end{equation*}
This shows that $\Gamma _{\leq N}$ is both analytic and coanalytic, and
hence Borel.

\begin{lemma}
\label{Lemma: choose basis}There are Borel functions $\gamma \mapsto
b_{i}^{\gamma }$ from $\Gamma _{N}$ to $\mathcal{V}$ for $i\leq N$ such that 
$\left\{ b_{1}^{\gamma }(\gamma ),\ldots ,b_{N}^{\gamma }(\gamma )\right\} $
is a basis for $\mathcal{OS}y(\gamma )$.
\end{lemma}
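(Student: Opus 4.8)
The plan is to select the basis greedily along the fixed enumeration $(\mathfrak{p}_n)_{n\in\mathbb{N}}$ of $\mathcal{V}$, at each stage taking the first polynomial whose value at $\gamma$ is linearly independent from the values chosen so far. Concretely, I would set $b_1^\gamma=\mathfrak{p}_{m_1(\gamma)}$, where $m_1(\gamma)$ is the least $m$ with $\mathfrak{p}_m(\gamma)\neq 0$; and, having defined $b_1^\gamma,\dots,b_k^\gamma$ for some $k<N$, I would set $b_{k+1}^\gamma=\mathfrak{p}_{m_{k+1}(\gamma)}$, where $m_{k+1}(\gamma)$ is the least $m$ such that $\mathfrak{p}_m(\gamma)$ is linearly independent over $\mathbb{C}$ from $\{b_1^\gamma(\gamma),\dots,b_k^\gamma(\gamma)\}$. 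Since there are only $N$ stages, the recursion is finite.

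First I would check that the procedure is well defined and produces a basis. The values $\{p(\gamma):p\in\mathcal{V}\}$ have the same complex linear span as $\{p(\gamma):p\in\mathcal{V}_{\mathbb{C}}\}$, because $\mathcal{V}$ already contains $1$, $X_n$, and $X_n^\ast$, and polynomials of degree at most $1$ evaluate to complex combinations of $I$, $\gamma_n$, $\gamma_n^\ast$. This span is dense in the $N$-dimensional space $\mathcal{OS}y(\gamma)$, hence equal to it. Therefore at each stage $k<N$ the current selections span a proper subspace, so some $\mathfrak{p}_m(\gamma)$ lies outside it and $m_{k+1}(\gamma)$ exists; and after $N$ stages we obtain $N$ linearly independent vectors in an $N$-dimensional space, i.e.\ a basis.

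The remaining point, which is the crux, is Borelness. The key input is that $\gamma\mapsto\|p(\gamma)\|$ is Borel for every $p\in\mathcal{V}_{\mathbb{C}}$ (as already used to show that $\Gamma_{\leq N}$ is Borel); this gives Borelness of $\gamma\mapsto \|\mathfrak{p}_m(\gamma)-\sum_{i\le k}\lambda_i\,\mathfrak{p}_{j_i}(\gamma)\|$ for fixed $\lambda\in\mathbb{Q}(i)^k$ and indices $j_i$. Because the span of finitely many vectors is closed, linear independence can be written as a Borel condition:
\begin{equation*}
\mathfrak{p}_m(\gamma)\notin\operatorname{span}_{\mathbb{C}}\{\mathfrak{p}_{j_1}(\gamma),\dots,\mathfrak{p}_{j_k}(\gamma)\}\iff \inf_{\lambda\in\mathbb{Q}(i)^k}\Big\|\mathfrak{p}_m(\gamma)-\sum_{i\le k}\lambda_i\,\mathfrak{p}_{j_i}(\gamma)\Big\|>0,
\end{equation*}
the right-hand side being Borel since $\mathbb{Q}(i)$ is countable. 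Working inductively, I would argue that each $\gamma\mapsto b_i^\gamma$ is Borel into the countable set $\mathcal{V}$: splitting $\Gamma_N$ into the countably many Borel pieces on which $(b_1^\gamma,\dots,b_k^\gamma)$ equals a fixed tuple $(\mathfrak{p}_{j_1},\dots,\mathfrak{p}_{j_k})$, the predicate that $\mathfrak{p}_m(\gamma)$ be independent from the previous values is Borel by the displayed equivalence, and hence ``$m_{k+1}(\gamma)$ is the least such $m$'' defines a Borel function. The main obstacle is thus purely descriptive-set-theoretic: casting linear independence in $B(H)$ as a Borel relation via the distance formula and the countable dense coefficient set $\mathbb{Q}(i)$, and invoking the standard fact that a least-witness function of a Borel relation is Borel.
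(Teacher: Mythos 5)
Your proposal is correct and follows essentially the same route as the paper: an induction on $k\leq N$ in which, at each stage, one selects in a Borel way a polynomial $p\in\mathcal{V}$ whose value $p(\gamma)$ lies outside the span of the previously chosen values, with Borelness of the selection coming from the countability of $\mathcal{V}$. The paper phrases the selection abstractly (a Borel relation $A\subset\Gamma_N\times\mathcal{V}$ with nonempty sections admits a Borel uniformization), whereas you make it explicit as a least-witness choice and also spell out the two points the paper leaves implicit—that the sections are nonempty because the $\mathbb{C}$-span of $\{p(\gamma):p\in\mathcal{V}\}$ is all of the finite-dimensional space $\mathcal{OS}y(\gamma)$, and that non-membership in a finite-dimensional span is a Borel condition via the distance formula over $\mathbb{Q}(i)$ coefficients.
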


\begin{proof}
We show by induction on $k\leq N$ that there are Borel functions $\gamma
\mapsto b_{i}^{\gamma }$ from $\Gamma _{N}$ to $\mathcal{V}$ for $i\leq k$
such that $\left\{ b_{1}^{\gamma }(\gamma ),\ldots ,b_{N}^{\gamma }(\gamma
)\right\} $ is a linearly independent set. For $k=1$ this is immediate.
Suppose that $b_{1}^{\gamma },\ldots ,b_{k}^{\gamma }$ have been defined for 
$k<N$. Define the relation $A$ to be the set of pairs $\left( \gamma
,p\right) \in \Gamma _{N}\times \mathcal{V}$ such that $p(\gamma )\notin 
\mathrm{span}\left\{ b_{1}^{\gamma }(\gamma ),\ldots ,b_{k}^{\gamma }(\gamma
)\right\} $. Since for every $\gamma \in \Gamma _{N}$ the corresponding
section $A_{\gamma }$ is nonempty, there is a Borel function $
b_{k+1}^{\gamma }:\Gamma _{N}\rightarrow \mathcal{V}$ such that $\left(
\gamma ,b_{k+1}^{\gamma }(\gamma )\right) \in A$.
\end{proof}

Denote in the following by $b_{1}^{\gamma },\ldots ,b_{N}^{\gamma }$ the
functions defined in Lemma \ref{Lemma: choose basis}. Observe that the maps $
\gamma \mapsto b_{k}^{\gamma }$ from $\Gamma $ to $B(H)$ are Borel. Denote
as before by $\mathcal{W}_{N}$ the set of polynomials of degree $1$ in the
noncommutative variables $X_{1},\ldots ,X_{N}$ and with coefficients from $
\mathbb{Q}(i)$. Similarly denote by $\mathcal{V}_{N}$ the set of polynomials
of degree $1$ in the noncommutative variables $X_{1},\ldots
,X_{N},X_{1}^{\ast },\ldots ,X_{N}^{\ast }$ and with coefficients from $
\mathbb{Q}(i)$.

\begin{lemma}
\label{Lemma: GAMMA HAT}$\widehat{\Gamma }_{N}$ is a Borel subset of $
B(H)^{N}$.
\end{lemma}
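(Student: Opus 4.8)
The plan is to reduce the entire statement to a single Borel test for linear independence of a tuple of operators. Write $L_m\subseteq B(H)^m$ for the set of linearly independent $m$-tuples. I would first prove that each $L_m$ is Borel, and then exhibit $\widehat{\Gamma}_N$ as a finite Boolean combination of preimages of $L_N$ and $L_{N+1}$ under Borel maps. The conditions defining $\widehat{\Gamma}_N$ unpack as: the tuple $(x_1,\ldots,x_N)$ is linearly independent, its span contains $I$ (unitality), and its span is closed under the adjoint (self-adjointness); finite-dimensionality of the span makes closedness automatic. Each of these three conditions will be seen to be an instance of linear (in)dependence.

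For the Borelness of $L_m$, I would fix a countable dense set $D\subseteq H$ and, for $\xi,\eta\in H$ and $\bar{x}=(x_1,\ldots,x_m)$, set
\begin{equation*}
v_{\xi,\eta}(\bar{x})=\bigl(\langle x_1\xi,\eta\rangle,\ldots,\langle x_m\xi,\eta\rangle\bigr)\in\mathbb{C}^m.
\end{equation*}
Since $\sum_i\lambda_i x_i=0$ holds if and only if $\sum_i\lambda_i\langle x_i\xi,\eta\rangle=0$ for all $\xi,\eta$ in the dense set $D$, the tuple $\bar{x}$ is linearly independent precisely when no nonzero $\lambda$ is orthogonal to all $v_{\xi,\eta}(\bar{x})$, that is, when the countable family $\{v_{\xi,\eta}(\bar{x}):\xi,\eta\in D\}$ spans $\mathbb{C}^m$. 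This happens if and only if some $m$ of these vectors form a basis, i.e. if and only if there exist $\xi_1,\eta_1,\ldots,\xi_m,\eta_m\in D$ with $\det\bigl[v_{\xi_1,\eta_1}(\bar{x})\mid\cdots\mid v_{\xi_m,\eta_m}(\bar{x})\bigr]\neq 0$. Each matrix coefficient $\bar{x}\mapsto\langle x_i\xi_j,\eta_j\rangle$ is continuous on balls for the weak operator topology, hence Borel on $B(H)$, so each determinant is a Borel function; thus $L_m$ is a countable union over $D^{2m}$ of Borel sets, and is Borel.

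With this in hand the rest is bookkeeping. Given that $(x_1,\ldots,x_N)$ is already linearly independent, a fixed operator $y$ lies in $\mathrm{span}\{x_1,\ldots,x_N\}$ exactly when $(x_1,\ldots,x_N,y)$ is linearly dependent, i.e. $(x_1,\ldots,x_N,y)\notin L_{N+1}$. Applying this with $y=I$ and with $y=x_i^\ast$, and using that the adjoint is weak-operator continuous on balls and hence Borel, I would write
\begin{equation*}
\widehat{\Gamma}_N=\bigl\{\bar{x}\in L_N:(\bar{x},I)\notin L_{N+1}\text{ and }(\bar{x},x_i^\ast)\notin L_{N+1}\text{ for all }i\leq N\bigr\},
\end{equation*}
which expresses exactly that $\bar{x}$ is independent, its span is unital, and its span is self-adjoint. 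This presents $\widehat{\Gamma}_N$ as a finite Boolean combination of Borel sets, so it is Borel. The only genuinely delicate point is the Borelness of $L_m$; once the matrix-coefficient description of linear independence is established, the unitality and self-adjointness conditions reduce to it directly, with no approximation or selection theorem required.
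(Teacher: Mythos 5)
Your proof is correct, and it reaches the paper's conclusion by a genuinely different mechanism. Both arguments start from the same unpacking of the definition: $\bar{x}\in\widehat{\Gamma}_N$ if and only if $\bar{x}$ is linearly independent, $I\in\mathrm{span}(\bar{x})$, and $x_i^\ast\in\mathrm{span}(\bar{x})$ for every $i\leq N$, closedness being automatic in finite dimensions. The difference lies in how these conditions are certified to be Borel. The paper does it metrically: it quantifies over the countable sets $\mathcal{V}_N$ and $\mathcal{W}_N$ of $\mathbb{Q}(i)$-coefficient degree-$1$ polynomials and over rational $\varepsilon$, expressing membership of $I$ and of the $x_i^\ast$ in the span by norm approximation, and linear independence by a uniform positive lower bound on $\Vert q(x_1,\ldots,x_k)-x_{k+1}\Vert$; the Borelness of each atomic condition then rests on the facts, left implicit there, that degree-$1$ $\ast$-polynomial evaluation and the operator norm are Borel for the inductive-limit Borel structure (the norm being weak-operator lower semicontinuous on bounded sets). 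You instead reduce all three conditions to membership in a single set $L_m$ and certify $L_m$ by exact finite-dimensional linear algebra: nonvanishing of a determinant of weak-operator matrix coefficients taken at vectors from a countable dense subset of $H$. This needs only weak-operator continuity of matrix coefficients and of the adjoint on balls; it avoids the norm and all approximation arguments, and it exhibits $\widehat{\Gamma}_N$ as a finite Boolean combination of Borel sets rather than as a countably quantified condition. What the paper's template buys is uniformity: the same norm-approximation scheme recurs in its other Borelness arguments (for $\Gamma_{\leq N}$, for $\Gamma_U$, for the sets $\widehat{\Gamma}_{N,k}$ of Lemma \ref{Lemma: select e}), including conditions that are genuinely metric rather than linear-algebraic. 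Your determinant criterion is sharper where it applies; indeed it would also streamline the appendix's proof that $\Gamma_{\leq N}$ is Borel, since ``$\mathcal{OS}y(\gamma)$ has dimension at most $N$'' becomes the countable intersection, over tuples $p_1,\ldots,p_{N+1}\in\mathcal{V}$, of the Borel conditions $\left(p_1(\gamma),\ldots,p_{N+1}(\gamma)\right)\notin L_{N+1}$, removing the need for the separate analytic and coanalytic descriptions used there.
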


\begin{proof}
Observe that $\bar{x}\in \widehat{\Gamma }_{N}$ if and only if

\begin{itemize}
\item for every $\varepsilon \in \mathbb{Q}_{+}$ there is $p\in \mathcal{V}
_{N}$ such that $\left\Vert p(x)-I\right\Vert <\varepsilon $,

\item for every $p\in \mathcal{V}_{N}$ and $\varepsilon \in \mathbb{Q}_{+}$
there is $q\in \mathcal{W}_{N}$ such that 
\begin{equation*}
\left\Vert p(x)-q(x)\right\Vert <\varepsilon \text{,}
\end{equation*}
and

\item for every $k<N$ there is $\varepsilon \in \mathbb{Q}_{+}$ such that
for every $q\in \mathcal{W}_{k}$ 
\begin{equation*}
\left\Vert q\left( x_{1},\ldots ,x_{k}\right) -x_{k+1}\right\Vert \geq
\varepsilon \text{.}\qedhere
\end{equation*}
\end{itemize}
\end{proof}

The operator system associated with $\left( x_{1},\ldots ,x_{N}\right) \in 
\widehat{\Gamma }_{N}$ is the  span of $\left\{ x_{1},\ldots ,x_{N}\right\} $
. Lemma \ref{Lemma: GAMMA HAT} shows that $\widehat{\Gamma }_{N}$ is a
standard Borel parametrization of $N$-dimensional operator systems. By Lemma 
\ref{Lemma: choose basis} such a parametrization is weakly equivalent to the
parametrization $\Gamma _{N}$.

\section{Equivalence of parametrizations of operator spaces\label
{Appendix:equivalenceOSp}}

In this appendix we show that the parametrizations for operator spaces $\Xi $
, $\widehat{\Xi }$, and $\Gamma $ are equivalent. The proof of the following
lemma is entirely analogous to the proof of Lemma \ref{Lemma: Borel
injection} and \cite[Lemma 2.4]{farah_turbulence_2014}.

\begin{lemma}
\label{Lemma: Borel injection 2}Suppose that $X$ is a standard Borel space,
and $Y$ is any of the space $\Gamma $, $\Xi $, and $\widehat{\Xi }$. If $f$
is a Borel function from $X$ to $Y$, then there is a Borel injection $
\widetilde{f}$ from $X$ to $Y$ such that $\mathcal{OS}p\left( f(x)\right)
\cong \mathcal{OS}p\left( f(x)\right) $ for every $x\in X$.
\end{lemma}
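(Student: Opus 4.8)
The plan is to follow the three-case argument of Lemma~\ref{Lemma: Borel injection} essentially verbatim, replacing $\mathcal{OS}y$ throughout by $\mathcal{OS}p$ and the $\ast$-vector space $\mathcal{V}$ by the space $\mathcal{W}$ of constant-free polynomials. The only structural difference is that operator spaces carry no order unit, so wherever the proof of Lemma~\ref{Lemma: Borel injection} exploits that the identity lies in every operator system I would instead Borel-select a nonzero element of the coded space. Concretely, in each case one first fixes in a Borel fashion a witness to the nonzeroness of $\mathcal{OS}p(f(x))$---available by the standing convention that all structures are nonzero---and then buries a copy of the input into the code using that witness, without enlarging the spanned subspace, so that $\mathcal{OS}p(\widetilde{f}(x))\cong\mathcal{OS}p(f(x))$ while $\widetilde{f}$ becomes injective.

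For $Y=\Gamma$ I would assume without loss of generality that $X$ is the standard Borel space of infinite subsets of $\mathbb{N}$. For $A\in X$ let $n_{0}(A)$ be the least $n$ with $f(A)_{n}\neq 0$; this is a Borel function, defined whenever $\mathcal{OS}p(f(A))\neq 0$. Set
\[
\widetilde{f}(A)_{k}=
\begin{cases}
f(A)_{n_{0}(A)} & \text{if }k=2^{n}\text{ for some }n\in A,\\
f(A)_{n} & \text{if }k=3^{n}\text{ for some }n\in\mathbb{N},\\
0 & \text{otherwise.}
\end{cases}
\]
Since the adjoined operators $f(A)_{n_{0}(A)}$ already lie in the span of the $f(A)_{n}$, the sequence $\widetilde{f}(A)$ spans the same subspace of $B(H)$ as $f(A)$, whence $\mathcal{OS}p(\widetilde{f}(A))=\mathcal{OS}p(f(A))$; and because $f(A)_{n_{0}(A)}\neq 0$, the set $\{n:\widetilde{f}(A)_{2^{n}}\neq 0\}$ recovers $A$, so $\widetilde{f}$ is injective. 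The case $Y=\widehat{\Xi}$ I would handle exactly as in Lemma~\ref{Lemma: Borel injection} via the relabeling action of $S_{\infty}$ on $\widehat{\Xi}$, which preserves the coded operator space, selecting a permutation $\sigma_{f(A),A}$ in a Borel way so that $A=\{2^{n}\cdot_{\sigma\cdot\xi}v:n\in\mathbb{N}\}$, where a Borel-selected nonzero vector $v$ plays the role previously taken by the order unit.

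For $Y=\Xi$ I would use the scaling trick. Assume $X=\mathbb{R}_{>0}$, write $\delta=f(x)$, and let $n_{0}$ be the least index with $\delta_{1}(X_{n_{0}})\neq 0$. Choosing a concrete realization $\gamma$ of $\delta$ and rescaling its $n_{0}$-th entry to $(x/\delta_{1}(X_{n_{0}}))\gamma_{n_{0}}$ leaves the spanned subspace of $B(H)$ unchanged, and the corresponding code is $\widetilde{f}(x)_{n}([q_{ij}])=\delta_{n}([q'_{ij}])$, where $q'_{ij}$ is obtained from $q_{ij}$ by replacing each occurrence of $X_{n_{0}}$ with $(x/\delta_{1}(X_{n_{0}}))X_{n_{0}}$. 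Thus $\mathcal{OS}p(\widetilde{f}(x))\cong\mathcal{OS}p(f(x))$, while injectivity is immediate since $\widetilde{f}(x)_{1}(X_{n_{0}})=x$ recovers the input. The one delicate point, and the step I expect to require the most care, is the Borelness of $\widetilde{f}$ in this last case: exactly as in Lemma~\ref{Lemma: Borel injection}, I would express the graph of $\widetilde{f}$ through an existential quantification over witnessing data (a sequence and its matrix-norm code), obtaining an analytic graph, and then conclude via the principle that a function with analytic graph is Borel \cite[Theorem 14.12]{kechris_classical_1995}.
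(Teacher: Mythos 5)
Your proposal is correct and takes essentially the same approach as the paper: the paper's own proof consists only of the remark that the argument is entirely analogous to that of Lemma~\ref{Lemma: Borel injection}, and your three-case construction carries out precisely that analogy. Your one substantive adjustment---replacing the uses of the identity/order unit (the entries $nI$ in the $\Gamma$ case, and the unit element in the $\widehat{\Xi}$ and $\Xi$ cases) by a Borel-selected nonzero element of the coded space---is exactly the modification the analogy requires, since an operator space need not contain the identity.
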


\begin{proposition}
\label{Proposition: equipar osp}The parametrizations of operator spaces $
\Gamma $, $\Xi $, and $\widehat{\Xi }$ are equivalent.
\end{proposition}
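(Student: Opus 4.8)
The plan is to follow the same three-step strategy as in the proof of Proposition \ref{Proposition: equipar osy}, replacing the Choi--Effros characterization by Ruan's abstract characterization of operator spaces and the role of unital completely positive maps by that of complete contractions. By Lemma \ref{Lemma: Borel injection 2} it suffices to exhibit Borel maps forming a cycle $\Gamma \to \widehat{\Xi} \to \Xi \to \Gamma$, each preserving the coded operator space up to complete isometry; the injection lemma then upgrades weak equivalence to equivalence.

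For the first map $\Gamma \to \widehat{\Xi}$ I would fix the enumeration $(\mathfrak{q}_n)_{n\in\mathbb{N}}$ of $\mathcal{W}$ and, given $\gamma \in \Gamma$, transport the $\mathbb{Q}(i)$-vector space operations and the matricial norms from $\{\mathfrak{q}_n(\gamma)\}$ onto $\mathbb{N}$, exactly as in the operator system case but with the positive cones and order unit deleted. Explicitly, set $n +_{\xi_\gamma} m = k \iff \mathfrak{q}_n(\gamma) + \mathfrak{q}_m(\gamma) = \mathfrak{q}_k(\gamma)$, define scalar multiplication analogously, and record the matricial norm of $[\mathfrak{q}_{m_{ij}}(\gamma)]$. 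This is manifestly Borel and yields $\mathcal{OS}p(\gamma) \cong \mathcal{OS}p(\xi_\gamma)$. For the second map $\widehat{\Xi} \to \Xi$, given a code $\xi$ for an $L^\infty$-matricially normed $\mathbb{Q}(i)$-vector space, I would simply read off the operator seminorm structure it induces on $\mathcal{W}$: for $P \in M_n(\mathcal{W})$ evaluate $P$ in the structure coded by $\xi$ and set $\delta_{\xi,n}(P)$ to be the resulting matricial norm. Since the defining axioms are Borel conditions, this gives a Borel map with $\mathcal{OS}p(\xi) \cong \mathcal{OS}p(\delta_\xi)$.

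The substantive step is the last map $\Xi \to \Gamma$, which must convert an abstract operator seminorm structure into a concrete sequence of Hilbert space operators. Here I would mirror the operator system argument, but with \emph{complete contractions} in place of unital completely positive maps. For each $k$ let $P_k(\delta)$ be the set of $\phi \in M_k(\mathbb{C})^{\mathcal{W}}$ that are completely contractive with respect to $\delta$, i.e. $\|[\phi(q_{ij})]\|_{M_{kn}(\mathbb{C})} \le \delta_n([q_{ij}])$ for all $n$ and all $[q_{ij}] \in M_n(\mathcal{W})$; this is a compact set and $\delta \mapsto P_k(\delta)$ is Borel into the relevant hyperspace by \cite[Theorem 28.8]{kechris_classical_1995}. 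The crucial input is Ruan's representation theorem \cite[Theorem 3.1]{ruan_subspaces_1988} (see also \cite{blecher_operator_2004}), which guarantees that the operator space norm is the supremum of $\|\phi^{(n)}(\cdot)\|$ over complete contractions $\phi$ into matrix algebras. Consequently, for every $(\delta, \varepsilon, [q_{ij}])$ the set of finite-dimensional complete contractions $\phi$ with $\|[\phi(q_{ij})]\| \ge \delta_n([q_{ij}]) - \varepsilon$ has nonempty compact sections, so a further application of \cite[Theorem 28.8]{kechris_classical_1995} provides a Borel selector. Assembling the selected contractions as block operators on a single separable Hilbert space $H$ produces a Borel map $\delta \mapsto \gamma_\delta$ with $\mathcal{OS}p(\gamma_\delta) \cong \mathcal{OS}p(\delta)$.

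The main obstacle is precisely this last step: verifying that finitely many finite-dimensional complete contractions suffice to recover the norm in a Borel-uniform way. This rests on Ruan's theorem, which plays here the role that the proof of the Choi--Effros characterization played in Proposition \ref{Proposition: equipar osy}; once the approximate attainment of the norm by such contractions is in hand, the Borel selection and the block assembly are routine adaptations of the operator system construction.
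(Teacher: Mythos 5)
Your proposal is correct, and its overall architecture (reduce to weak equivalence via Lemma \ref{Lemma: Borel injection 2}, take the easy maps $\Gamma \to \widehat{\Xi} \to \Xi$, and concentrate all the work in $\Xi \to \Gamma$) is exactly the paper's. Where you genuinely diverge is in the substantive leg $\Xi \to \Gamma$: you rerun the operator-system selection argument directly, with unital completely positive maps into matrix algebras replaced by complete contractions, using the norming form of Ruan's theorem (for $x\in M_n(V)$ the norm $\left\Vert x\right\Vert _{n}$ is the supremum of $\left\Vert \phi ^{(n)}(x)\right\Vert $ over complete contractions $\phi \colon V\rightarrow M_{k}(\mathbb{C})$; by Smith's lemma one may take $k=n$, which is what keeps the relevant sections compact) to get nonempty compact sections and then apply \cite[Theorem 28.8]{kechris_classical_1995} and assemble block-diagonal operators. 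The paper instead avoids repeating the selection machinery altogether: given $\delta \in \Xi $ it Borel-constructs a code $\widehat{\delta }$ for the \emph{Paulsen system}, i.e.\ the $2\times 2$ system of elements $\begin{bmatrix} \lambda & p \\ q^{\ast } & \mu \end{bmatrix}$ with cones defined by positivity of the scalar corners $P,Q$ and the condition $\left\Vert \left( P+\varepsilon I_{n}\right) ^{-1}X\left( Q+\varepsilon I_{n}\right) ^{-1}\right\Vert \leq 1$ (following the proof of Ruan's characterization in \cite[Theorem 13.4]{paulsen_completely_2002}), feeds $\widehat{\delta }$ into the already-proven operator-system map $\Xi \rightarrow \Gamma $ from Proposition \ref{Proposition: equipar osy}, and restricts the resulting complete isometry to the corner $q\mapsto \begin{pmatrix} 0 & q \\ 0 & 0 \end{pmatrix}$. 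Both routes rest on the same deep input (Ruan's theorem); yours is more self-contained and perfectly parallel to the operator-system case, at the cost of redoing the measurable-selection step, while the paper's reduction via the Paulsen system reuses the operator-system construction wholesale and only needs to check that the cone definition is Borel in $\delta $ and that the corner embedding is completely isometric. One caveat, which applies equally to the paper's own argument: for the block-diagonal assembly to satisfy $q(\gamma _{\delta })=\bigoplus \phi (q)$ you must include $\mathbb{Q}(i)$-linearity of $\phi $ among the (closed, hence compactness-preserving) conditions defining $P_{k}(\delta )$, since the complete-contractivity constraints alone do not force linearity (e.g.\ entrywise complex conjugation satisfies them).
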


\begin{proof}
In view of Lemma \ref{Lemma: Borel injection 2} it is enough to show that $
\Gamma $, $\Xi $, and $\widehat{\Xi }$ are weakly equivalent.
Isomorphism-preserving Borel functions from $\Gamma $ to $\widehat{\Xi }$
and from $\widehat{\Xi }$ to $\Xi $ can be easily defined as in\ Proposition 
\ref{Proposition: equipar osy}. Hence we focus here on constructing an
isomorphism-preserving Borel function from $\Xi $ to $\Gamma $. Observe that
we can identify $\mathcal{V}$ with the $\mathbb{Q}(i)$-$\ast $-vector space $
\mathbb{\mathbb{Q}}(i)\oplus \mathbb{\mathbb{Q}}(i)\oplus \mathcal{W}\oplus 
\overline{\mathcal{W}}$, where $\overline{\mathcal{W}}$ denote the complex
conjugate of the $\mathbb{Q}(i)$-vector spaces $\mathcal{W}$. For
convenience we represent an element of $\mathcal{V}$ as a matrix
\begin{equation*}
\begin{bmatrix}
\lambda & p \\ 
q^{\ast } & \mu
\end{bmatrix}
\end{equation*}
where $\lambda ,\mu \in \mathbb{Q}(i)$ and $p,q\in \mathcal{W}$. Similarly
an $n\times n$ matrix $V$ of elements of $\mathcal{V}$ can be regarded,
after a canonical shuffle, as
\begin{equation*}
\begin{bmatrix}
P & X \\ 
Y^{\ast } & Q
\end{bmatrix}
\end{equation*}
where $P,Q\in M_{n}(\mathbb{\mathbb{Q}}(i))$ and $X,Y\in M_{n}(W)$. We will
adopt these identifications throughout the rest of the proof. Suppose that $
\delta \in \Xi $. Define $C_{n}$ to be the set of
\begin{equation*}
\begin{bmatrix}
P & X \\ 
X^{\ast } & Q
\end{bmatrix}
\end{equation*}
such that $P,Q\in M_{n}(\mathbb{\mathbb{Q}}(i))$ are positive, and
\begin{equation*}
\left\Vert \left( P+\varepsilon I_{n}\right) ^{-1}X\left( Q+\varepsilon
I_{n}\right) ^{-1}\right\Vert \leq 1
\end{equation*}
for every $\varepsilon \in \mathbb{Q}_{+}$. Define then for $V\in M_{n}(
\mathcal{V})$
\begin{equation*}
\widehat{\delta }_{n}(V)<r\iff 
\begin{bmatrix}
I_{n} & V \\ 
V^{\ast } & I_{n}
\end{bmatrix}
\in C_{2n}\text{.}
\end{equation*}
The proof of the abstract characterization of operator spaces due to Ruan 
\cite[Theorem 13.4]{paulsen_completely_2002} shows that 
\begin{equation*}
(\widehat{\delta }_{n})_{n\in \mathbb{N}}\in \prod_{n\in \mathbb{N}}\mathbb{R
}^{M_{n}(V)}
\end{equation*}
is a code for an operator system in the parametrization $\Xi $ for operator
systems. Moreover the function
\begin{equation*}
q\mapsto 
\begin{pmatrix}
0 & q \\ 
0 & 0
\end{pmatrix}
\end{equation*}
induces a complete isometry from the operator space coded by $\delta $ into
the operator system coded by $\widehat{\delta }$. The proof of Proposition 
\ref{Proposition: equipar osy} allows one to assign in a Borel way to $
\widehat{\delta }$ a sequence $\gamma _{\delta }$ in $B(H)$ such that the
function $p\mapsto p(\gamma _{\delta })$ induces a complete isometry from
the operator system coded by $\widehat{\delta }$ onto the operator system
generated by $\gamma _{\delta }$. Therefore the function $q\mapsto q(\gamma
_{\delta })$ induces a complete isometry from the operator space coded by $
\delta $ onto the operator space generated by $\gamma _{\delta }$. The proof
is concluded by observing that the construction above shows that the
assignment $\delta \mapsto \gamma _{\delta }$ is Borel.
\end{proof}

\end{document}